\def\theequation{\@arabic\c@equation}
\newcommand{\bbN}{{\mathbb{N}}}
\newcommand{\bbR}{{\mathbb{R}}}
\newcommand{\C}{{\mathbb{C}}}
\newcommand{\bbC}{{\mathbb{C}}}
\newcommand{\bbJ}{{\mathbb{J}}}
\newcommand{\cB}{{\mathcal B}}
\newcommand{\cE}{{\mathcal E}}
\newcommand{\cG}{{\mathcal G}}
\newcommand{\cH}{{\mathcal H}}
\newcommand{\cJ}{{\mathcal J}}
\newcommand{\cL}{{\mathcal L}}
\newcommand{\cO}{{\mathcal O}}
\newcommand{\cP}{{\mathcal P}}
\newcommand{\cV}{{\mathcal V}}
\newcommand{\cX}{{\mathcal X}}
\newcommand{\no}{\nonumber}
\newcommand{\lb}{\label}
\newcommand{\wti}{\widetilde  }
\newcommand{\dL}{\prescript{d\!}{}L}
\newcommand{\dP}{\prescript{d\!}{}P}
\newcommand{\ran}{\text{\rm{ran}}}
\newcommand{\bi}{\bibitem}
\newcommand{\hatt}{\widehat}
\newcommand{\dist}{\operatorname{dist}}
\newcommand{\bfi}{{\bf i}}
\numberwithin{equation}{section}
\newcommand{\dom}{\operatorname{dom}}
\newcommand{\tr}{\operatorname{tr}}
\newcommand{\supp}{\operatorname{supp}}
\renewcommand\Im{\operatorname{Im}}
\renewcommand{\ker}{\operatorname{ker}}
\theoremstyle{plain}
\newtheorem{theorem}{Theorem}[section]
\newtheorem{condition}[theorem]{Condition}
\newtheorem{lemma}[theorem]{Lemma}
\newtheorem{proposition}[theorem]{Proposition}
\theoremstyle{definition}
\newtheorem{definition}[theorem]{Definition}
\newtheorem{example}[theorem]{Example}
\newtheorem{remark}[theorem]{Remark}
\newcommand{\red}{\color{red}}
\DeclareMathOperator{\card}{card}
\DeclareMathOperator{\spec}{Spec}
\begin{document}

\allowdisplaybreaks

\title[Quantum graphs with shrinking edges]{Limits of Quantum Graph Operators With Shrinking Edges}

\author[G.\ Berkolaiko]{Gregory Berkolaiko}
\address{Department of Mathematics,
Texas A\&M University, College Station,
TX 77843, USA}
\email{berko@math.tamu.edu}
\author[Y. Latushkin]{Yuri Latushkin}
\address{Department of Mathematics,
The University of Missouri, Columbia, MO 65211, USA}
\email{latushkiny@missouri.edu}
\author[S. Sukhtaiev]{Selim Sukhtaiev	}
\address{Department of Mathematics,
Rice University, Houston, TX 77005, USA}
\email{sukhtaiev@rice.edu}
\date{\today}

\thanks{Supported by the NSF grants DMS-1067929, DMS-1410657 and
  DMS-1710989, by the Research Board and Research Council of the
  University of Missouri, by SEC Faculty Travel Program and by the
  Simons Foundation. We are grateful to J.~Bolte, P.~Exner and
  U.~Smilansky for the in-depth discussion of the question leading to
  Example~\ref{ex:hyperbolic}.  We thank Th.~Schlumprecht for the
  discussion of Proposition \ref{nw10}.  We are grateful to Y.~Colin
  de Verdi\'ere and S.~Courte for many helpful observations regarding
  the nature of Condition~\ref{newhyp} and definition of $\wti\cL$,
  equation~\eqref{u6}.}

\keywords{Schr\"odinger operators,
  eigenvalues, discrete spectrum}

\begin{abstract}
  We address the question of convergence of Schr\"odinger operators on
  metric graphs with general self-adjoint vertex conditions as lengths
  of some of graph's edges shrink to zero.  We determine the
  limiting operator and study convergence in a suitable norm resolvent
  sense.  It is noteworthy that, as edge lengths tend to zero,
  standard Sobolev-type estimates break down, making convergence fail
  for some graphs.  We use a combination of
  functional-analytic bounds on the edges of the graph and Lagrangian
  geometry considerations for the vertex conditions to establish a
  sufficient condition for convergence.  This condition encodes an
  intricate balance between the topology of the graph and its vertex
  data.  In particular, it does not depend on the potential, on the
  differences in the rates of convergence of the shrinking edges, or
  on the lengths of the unaffected edges.
\end{abstract}

\maketitle

{\scriptsize{\tableofcontents}}

%%%%%%%%%%%%%%%%%%%%%%%%%%%%%%%%%%%%%%%%%%%%%%%%%%%%%%%%%%%%%%%%%%%%%%%%
\section{Introduction}

Continuous dependence of eigenvalues on edge lengths is a
fundamental issue in the spectral theory of quantum graphs
\cite{BK,M14}.  In particular, it is vital to spectral shape
optimization problems which have received much attention recently (see
for example \cite{F05, EJ, KKM, MR3182688, KKMM, DR, BL,
 MR3688110, MR3611325, Ar} and
references therein).  In such optimization problems achieving extremum
often requires redistribution of volume (edge length) from one edge to
another.  It is thus important to determine the limit of a quantum
graph operator as one or more of the graph's edges shrink to zero.

We answer this question in a very general setting: Schr\"odinger
operators on graphs with general self-adjoint vertex conditions.  The
question naturally breaks into three parts.  First, one has to
determine the domain of the putative limiting operator; this is simple to do on an
intuitive level.  We recall that any set of self-adjoint vertex
conditions is determined by a system of linear relations between the
values of the function $f$ and its derivative $f'$ at the vertices.
Heuristically, the values of the function (and its derivative) at the
end points of an infinitesimally short edge should match.  Hence, it
is natural to conjecture that the vertex conditions for the limiting
operator stem from the augmented linear system
\begin{align}
  \label{eq:satisfy_old}
  &f \text{ satisfies the original vertex conditions and}\\
  \label{eq:equal_on_vanishing}
  &f_e(0)=f_e(\ell_e),\ f_e'(0)=f_e'(\ell_e),
  \quad \text{for every edge $e$ of length $\ell_e\to0$}.
\end{align}  
Eliminating from this system the variables corresponding to the edges of
vanishing length, one obtains the new set of the limiting vertex conditions on the
reduced graph.

The second step is to determine if the vertex conditions obtained
through the above procedure \emph{always define a self-adjoint
  operator} on the new graph.  We answer this question in the positive
by reformulating it in terms of Lagrangian geometry.  It is well known
that self-adjoint extensions of a symmetric operator with equal
deficiency indices are in one-to-one correspondence with the
Lagrangian planes in some symplectic Hilbert space \cite{AS80, BbF95,
  KS99, Ha00, LS1, LSS, McS}.  The question of restricting
self-adjoint vertex conditions from the original graph to its reduced
version --- with some edges shrunk to zero --- is reframed in terms of
the so-called linear symplectic reduction (see, for example,
\cite{McS}) allowing us to show that
(\ref{eq:satisfy_old})-(\ref{eq:equal_on_vanishing}) indeed define a
valid self-adjoint limiting operator.

We now give two simple but illuminating examples of the limiting
vertex conditions.  Consider the graph displayed in the left part of Figure \ref{fg1}. We impose $\delta-$type boundary conditions (cf. \eqref{gh8}) with coupling constants
$\alpha_-$ and $\alpha_+$ at the end points of the
vanishing (middle) edge. Then the limiting vertex condition,  in the right part of Figure \ref{fg1}, is also of
$\delta-$type but with the coupling constant $\alpha_-+\alpha_+$.  An
interesting dichotomy arises when we contract a loop with
$\theta-$periodic conditions (cf. \eqref{gh14}) as shown in Figure \ref{fg4}.  If
$\theta\not=0\ (\text{mod} \ 2\pi)$, shrinking results in two separated
vertices with the Dirichlet conditions (cf. \eqref{gh12}), whereas contracting a periodic
loop (i.e. $\theta=0$) preserves the conditions at the connecting
vertex.  More examples are considered in Section~\ref{sec:main_results}.

The final third step is to investigate convergence of approximating
operators to the limiting operator.  This turns out to be the most
difficult part since the \emph{convergence does not always hold}.  In
Section~\ref{sec:main_results} we construct several examples of
increasing sophistication that illustrate the problem.  Perhaps the
most striking example is that of a sequence of graphs, each with $-1$
as an eigenvalue, whose supposed limit is a positive operator, see
Example~\ref{ex:hyperbolic_fork}.  It turns out to be a delicate job
to craft a condition which excludes all counter-examples and yet
includes all known cases when the convergence does occur.  This is
achieved in Condition~\ref{newhyp} (``Non-resonance Condition'')
which, informally, does not allow eigenfunctions of the approximating
operators to be supported exclusively on the vanishing edges.  We also
show that in some settings which often arise in applications, this
sufficient condition also turns out to be necessary.
Condition~\ref{newhyp} is formulated entirely in terms of the easily
accessible information: the vertex conditions $\cL$ on one hand and
the topological connectivity information from equation
(\ref{eq:equal_on_vanishing}) on the other.  A weaker but more
technical sufficient condition (which follows from Condition 3.2.) is
that the norms of resolvents on the approximating graphs, considered
as operators from $L^2$ to $L^\infty$, remain uniformly bounded as
lengths of some edges shrink to zero.  We point out that such a
boundedness does not hold in general since the standard Sobolev
estimates break down as edge lengths go to zero.

It is important to elaborate on the notion of convergence appropriate for
the operators we consider.  The approximating and the limiting
operators are defined on significantly different spaces making direct
comparison impossible.  Instead we use the notion of \emph{generalized
  norm resolvent convergence}, formulated by O.~Post \cite{P06, P11,
  P12} and P.~Exner \cite{EP} to study the convergence of differential
operators on thin structures to differential operators on graphs.  The
core of the method is to intertwine the spaces of functions supported
on the thick and thin structures by means of quasi-unitary operators.
For illuminating discussion of this subject we refer to \cite[Chapter
4]{P12}.  In our model, the quasi-unitary operators $\cJ_{\ell}$,
formally defined in (\ref{d5}), simply extend by zero the
functions defined on the reduced graph.  This action of the operators
$\cJ_{\ell}$ and their quasi-inverses $\cJ_{\ell}^*$ is schematically
illustrated in Figure \ref{fg1}.

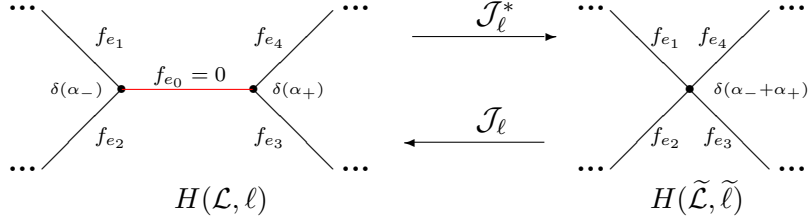
\begin{figure}
	\begin{floatrow}		
		\begin{picture}(0,100)(0,0)
		\put(-120,50){\circle*{3}}
		\put(-70,50){\circle*{3}}
		\put(-108,53){\tiny $f_{e_0}=0$}  
	%	\put(-116,43){\tiny\text{vanishing edge}}  
		\put(-70,68){\tiny $f_{e_4}$}  
		\put(-70,30){\tiny $f_{e_3}$}  
		\put(-130,68){\tiny $f_{e_1}$}  
		\put(-130,30){\tiny $f_{e_2}$}  
		%	\put(63,49){$v_-$}  
		%	\put(137,49){$v_+$} 
		\put(-63,50){\tiny $_{\delta(\alpha_+)}$} 
		\put(-147,50){\tiny $_{\delta(\alpha_-)}$} 
		\put(-37,80){{\bf...}}
		\put(-37,19){{\bf...}}
		\put(-163,19){{\bf...}}
		\put(-163,80){{\bf...}} 	
		\thinlines	
		{\red\put(-120,50){\line(1,0){50}}}
		\put(41.5, 70){\vector(1,0){3}}
		\put(-10,70){\line(1,0){50}}
		\put(10,75){ $\cJ^*_{\ell}$} 
		
		\put(-150,20){\line(1,1){30}}
		\put(-150,80){\line(1,-1){30}}
		\put(-70,50){\line(1,1){30}}
		\put(-70,50){\line(1,-1){30}}     
		\put(-100,5){\small $H(\cL, \ell)$}  
		
		\thinlines
		\put(-10,29.8){\vector(-1,0){3}}
		\put(-10,30){\line(1,0){50}}
		\put(10,35){ $\cJ_{\ell}$} 
		
		\put(98,68){\tiny $f_{e_4}$}  
		\put(100,30){\tiny $f_{e_3}$}  
		\put(80,68){\tiny $f_{e_1}$}  
		\put(80,30){\tiny $f_{e_2}$} 
		\put(95,50.2){\circle*{3}}
		\put(128,80){{\bf...}}
		\put(128,19){{\bf...}}
		\put(52,19){{\bf...}}
		\put(52,80){{\bf...}}
		\put(104,50){\tiny $_{\delta(\alpha_-+\alpha_+)}$} 
		
		\put(65,20){\line(1,1){30}}
		\put(65,80){\line(1,-1){30}}
		\put(95,50){\line(1,1){30}}
		\put(95,50){\line(1,-1){30}}   	
		\put(80,5){\small $H(\wti\cL, \wti \ell)$}   
		
		\end{picture}
		\caption{A vanishing edge (horizontal) $e_0$
                  connecting two vertices equipped with the
                  $\delta$-type boundary conditions. Quasi-unitary
                  operators map the spaces of functions supported on
                  respective graphs and ``almost'' intertwines
                  the operators on the corresponding graphs.}
                \label{fg1}
	\end{floatrow}
\end{figure} 

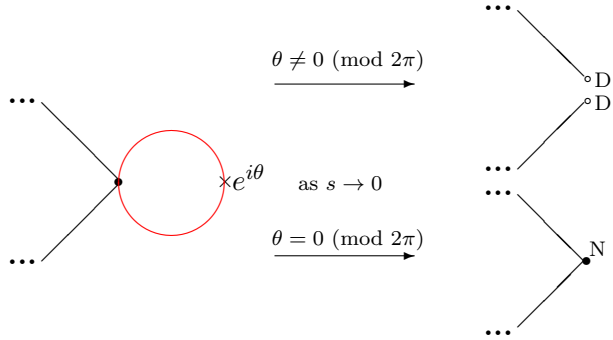
\begin{figure}[h]
	\begin{floatrow}		
		\hspace{-2.5cm}\begin{picture}(100,130)(0,0)
		\put(-11,55){\circle*{3}}
		\put(26,53){\bf \tiny$\times$}
		\put(32,52){\small $e^{i\theta}$}  
		%	\put(16,49){$_{\delta(\alpha)}$}  
		\put(-53,24){{\bf...}}
		\put(-53,85){{\bf...}} 	
		\put(-40,25){\line(1,1){30}}
		\put(-40,85){\line(1,-1){30}}  
		{\red\put(9,54.5){\circle{41}}}

		\put(48,92){\line(1,0){50}}  
		\put(47,99){\tiny{$\theta\not=0\ (\text{mod}\  2\pi)$}}  
		\put(98,92){\vector(1,0){3}}
		\put(57,52){\tiny{as $s\rightarrow 0$}}  
		
		\put(48,27){\line(1,0){50}}  
		\put(47,31){\tiny{$\theta=0\ (\text{mod}\  2\pi)$}}  
		\put(98,27){\vector(1,0){3}}
		
		\put(127,59){{\bf...}}
		\put(127,120){{\bf...}}
		\put(166.5,94){\circle{2}}
		\put(166.5,85.5){\circle{2}}	
		\put(169,82){\text{\tiny D}}	
		\put(169,91){\text{\tiny D}}
		\put(140,60){\line(1,1){25}}
		\put(140,120){\line(1,-1){25}}

		\put(127,-3){{\bf...}}
		\put(127,50){{\bf...}}
		\put(166,25){\circle*{3}}
		\put(167,27){\tiny N}
		\put(140,0){\line(1,1){25}}
		\put(140,50){\line(1,-1){25}}
		\end{picture}
		\caption{Loop of length $s$.}\lb{fg4}
	\end{floatrow}
\end{figure}

We now summarize previous related work. In \cite{BK12} it was shown
that the eigenvalues of the Schr\"odinger operator with arbitrary
vertex conditions depend analytically on the edge lengths, as long as
they remain strictly positive.  On the opposite side of the spectrum
are the results of \cite{HS}, where the behavior of the eigenvalues of
the Schr\"odinger operators with matrix valued potential on $[0,s]$
was studied as $s\rightarrow 0$.  The case of diagonal potential here
is equivalent to a bipartite graph with all edges of the same length.
Band and Levy \cite[App.~A]{BL} gave an informal argument for
eigenvalue convergence for the case of shrinking to zero edges that
link vertices with Neumann--Kirchhoff, i.e. $\delta$-type with zero
coupling constant, conditions.  They approached the problem via a
secular determinant which is only viable for scale invariant boundary
conditions and zero potential.  Finally, perhaps the most directly
related reference is Cheon, Exner and Turek \cite{CEO}, which resolves
a longstanding open problem about approximating a vertex with
arbitrary conditions by a graph with internal structure but only
$\delta$-type conditions.  As the approximating graph is shrunk to a
point, the authors allow $\delta$-couplings to vary, calculate
Green's function explicitly and thus establish convergence.  We note
that the case of a graph with fixed $\delta$-type conditions is
covered by our results via Lemma~\ref{1a}. Finally we remark that our
methods are not incremental extensions of the above mentioned works
but a new combination of functional-analytic estimates and Lagrangian
geometry considerations.

This paper is organized as follows. In Section \ref{sec2} we discuss a
one-to-one correspondence between Lagrangian planes and self-adjoint
boundary conditions on metric graphs.  Section \ref{sec:main_results}
summarizes main results of this paper illustrated by numerous
examples.  Section~\ref{sec5} reviews relevant definitions and results
from linear symplectic geometry, proves self-adjointness of the
limiting operator, and explores the geometrical meaning of
Condition~\ref{newhyp}.  Functional-analytic estimates producing the
main result are presented in Section~\ref{aux}.

{\bf Notation.} We denote by $I_n$ the $n\times n$ identity matrix.
For an $n\times m$ matrix $A=(a_{ij})_{i=1,j=1}^{n,m}$ and a
$k\times\ell$ matrix $B=(b_{ij})_{i=1,j=1}^{k,\ell}$, we denote by
$A\otimes B$ the Kronecker product, that is, the $nk\times m\ell$
matrix composed of $k\times\ell$ blocks $a_{ij}B$, $i=1,\dots n$,
$j=1,\dots m$. We let $\langle\cdot\,,\cdot\rangle_{\C^n}$ denote the
complex scalar product in the space $\C^n$ of $n\times 1$ vectors.
%When $a=(a_i)_{i=1}^n\in\bbC^n$ and $b=(b_j)_{j=1}^m\in\bbC^m$ are $(n\times 1)$ and $(m\times 1)$ column vectors,  we use notation $(a,b)^\top$ for the $(n+m)\times 1$ column vector with the entries $a_1,\dots,a_n,b_1,\dots,b_m$ (just avoiding  the use of $(a^\top,b^\top)^\top$).
We denote by $\cB(\cX)$ the set of linear bounded operators and by
$\spec(T)$ the spectrum of an operator $T$ on a Hilbert space
$\cX$. Given a subspace $S\subset \cX$ we denote
$\prescript{d}{}S:= S\oplus S$.  Given an operator $T$ acting in $\cX$
we denote $\prescript{d}{}T:=T\oplus T$, then $\prescript{d}{}T$ acts
in $\prescript{d}{}\cX$. Given two subspace $U,V\subset \cX$, we write
$\cX:=U\dot{+}V$ if $U\cap V=\{0\}$ and $U{+}V=\cX$.

We denote $\bbR_{>0}^d:= (0,\infty)^d,\ \bbR_{\geq0}^d:= [0,\infty)^d$, $d\in\bbN.$ Given two positive quantities $x, y$ we write $x\lesssim_{\alpha}y$ if there exits a positive constant  $c=c(\alpha)>0$ depending only on $\alpha$ such that $x\leq c(\alpha) y$, likewise $x\lesssim y$ if and only if $x\leq C y$ for some absolute constant $C>0$. Given an edge $e$ incident to a vertex $v$ we write $e \sim v$.

\section{Preliminaries and Notation}\label{sec2}
\subsection{Schr\"odinger Operators on Graphs With Fixed Edge Lengths.} We begin by discussing  differential operators on metric graphs. To set the stage, let us fix a discrete graph $\cG=(\cV,\cE)$ where $\cV$ and $\cE$ denote the set of vertices and edges correspondingly. We assume that $\cG$ consists of finite number $|\cV|$ of vertices and finite number  $|\cE|$ of edges. Each edge $e\in\cE$  is assigned positive length $\ell_{e}\in(0,\infty)$ and some direction. The corresponding metric graph is denoted by $\Gamma$. The boundary $\partial\Gamma$ of the metric graph is defined as follows,
\begin{equation}
\partial\Gamma:=\cup_{e\in\cE} \{a_e,b_e\}, 	
\end{equation}
where $a_e, b_e$ denote the end points of edge $e$.  Then, one has
\begin{equation}\label{vv1}
L^2(\partial\Gamma)\cong \bbC^{2|\cE|},
\end{equation}
where the space $L^2(\partial\Gamma)=\bigoplus_{e\in\cE}\left( L^2(\{a_e\})\oplus L^2(\{b_e\})\right)$  corresponds to the discrete Dirac measure with support $\cup_{e\in \cE} \{a_e, b_e\}$.
%In order to define a metric graph associated with $\cG$ we equip each edge $e\in \cE$ with an orientation directed from the starting vertex denoted by $\partial_-e$ to the end vertex $\partial_+e$. In addition, e
Let us introduce the following spaces of functions
\begin{align}
&L^2(\Gamma):=\bigoplus_{e\in\cE}L^2(e),\  \hatt{H}^k(\Gamma):=\bigoplus_{e\in\cE}H^k(e),\ k\in\bbN\no,
\end{align}  
where $H^k(e)$ is the standard $L^2$ based Sobolev space of order $k\in \bbN$.  
The Dirichlet and Neumann trace operators are defined by the formulas
\begin{align}
&\gamma_D: \hatt{H}^2(\Gamma)\rightarrow L^2(\partial \Gamma),
\ \gamma_Df:=f|_{\partial \Gamma}, f\in \hatt{H}^2(\Gamma),\label{vv2}\\
&\gamma_N: \hatt{H}^2(\Gamma)\rightarrow L^2(\partial \Gamma),
\ \gamma_N f:=\partial_{\nu}f|_{\partial \Gamma}, f\in \hatt{H}^2(\Gamma),\label{vv3}
%\big(f(\partial_-e_1), f(\partial_+e_1), \dots, f(\partial_-e_{|\cE(\ell)|}), f(\partial_+e_{|\cE(\ell)|})\big)^{\top}\in\bbC^{2|\cE(\ell)|}.
\end{align}
where $\partial_{\nu} f$ denotes the inward derivative of $f$.
%\begin{equation}\label{b3}
%\gamma_N^{\ell}: \hatt{H}^2(\Gamma(\ell))\rightarrow \bbC^{2|\cE(\ell)|},
%\ \gamma_N^{\ell} (f):=
%\left(\begin{matrix}
%\{f'_e(0)\}_{e\in\cE(\ell)}\\
%\{-f'_e(\ell_e)\}_{e\in\cE(\ell)}
%\end{matrix}\right), f\in \hatt{H}^2(\Gamma(\ell)).
%\end{equation}
The trace operator is a bounded, linear operator given by
\begin{equation}\lb{2.4new}
\tr:=
\left[\begin{matrix}
\gamma_D\\
\gamma_N
\end{matrix}\right],\, \tr: \hatt{H}^2(\Gamma)\rightarrow L^2(\partial \Gamma)\oplus L^2(\partial \Gamma)\cong \bbC^{4|\cE|}.
\end{equation}
This notation gives rise to the following form of the second Green's identity,
\begin{equation}\label{vv5}
  \int_{\Gamma} \overline{f''}g-\overline{f}g''
  =-\int_{\partial\Gamma}
  \overline{\partial_{\nu}f}g-\overline{f}\partial_{\nu}g 
  = \langle\tr f, [J\otimes I_{2|\cE|}]\tr g\rangle_{\bbC^{4|\cE|}}.
\end{equation}
Finally, the Sobolev space of functions vanishing on the boundary $\partial\Gamma$ together with their derivatives is denoted by
\begin{equation}\label{b4}
\hatt{H}^2_0(\Gamma):=\left\{f\in \hatt{H}^2(\Gamma): \tr f=0\right\}.
\end{equation}

Next, we introduce the minimal Schr\"odinger operator $H_{min}$ and its adjoint $H_{max}$. To this end, let us fix a bounded real-valued potential $q\in L^{\infty}(\Gamma;\bbR)$. Then the linear operator 
\begin{equation}\label{b5}
H_{min}:=-\frac{d^2}{dx^2}+q,\quad \dom(H_{min})=\hatt H^2_0(\Gamma),
\end{equation}
is symmetric in $L^2(\Gamma)$. Its adjoint $H_{max}:=H_{min}^*$ is given by the formulas 
\begin{equation}\label{b6}
H_{max}:=-\frac{d^2}{dx^2}+q,\quad \dom(H_{max})=\hatt{H}^2(\Gamma).
\end{equation}
Moreover, the deficiency indices of $H_{min}$ are finite and equal, that is,
\begin{equation}
0<\dim\ker(H_{max}-\bfi)=\dim\ker(H_{max}+\bfi)<\infty.
\end{equation}
By the standard von-Neumann theory, the self-adjoint extensions of
$H_{min}$ exist and every self-adjoint extension $H$ satisfies
$H_{min}\subset H=H^*\subset H_{max}$. There are various possible
parameterizations of all self-adjoint extensions of the minimal
operator. In this paper we utilize the one stemming from symplectic
geometry. Namely, we use the fact that the self-adjoint extensions of
the minimal operator are in one-to-one correspondence with the
Lagrangian planes in some symplectic Hilbert space \cite{AS80, McS,
  Pa}. This relation was noted by many authors in different forms,
cf., e.g, \cite{BbF95, Ha00, KS99,
  LS1}. %We will adopt a version of such statement formulated in \cite{KS06}.
For the sake of completeness we provide its proof in
Section~\ref{sec5} after recalling the definition of Lagrangian subspaces
of a symplectic space.

\begin{proposition}[cf.\ \cite{Ha00, KS99, KS06}]\label{b1}
  Assume that $q\in L^{\infty}(\Gamma;\bbR)$. Then the self-adjoint
  extensions of $H_{min}$ $($cf.\ \eqref{b5}$)$ are in one-to-one
  correspondence with the Lagrangian planes in
  $\dL^2(\partial \Gamma)$ equipped with the symplectic form $\omega$
  given by
  \begin{align} 
    \label{vv9}
    &\omega:\  \dL^2(\partial \Gamma)\times\,\dL^2(\partial
      \Gamma)\rightarrow \bbC, \\
    \label{eq:def_omega}
    &\omega( (\phi_1, \phi_2), (\psi_1, \psi_2)):=\int_{\partial\Gamma} \overline{\phi_2}\psi_1-\overline{\phi_1}\psi_2,\\
    & (\phi_1, \phi_2), (\psi_1, \psi_2)\in \dL^2(\partial\Gamma).\label{vv10}
  \end{align}
  Namely, the following two assertions hold.
  
  1) If $H$ is a self-adjoint extension of $H_{min}$ then 
  \begin{equation}\no
    \cL({H}):=\tr\big({\dom(H)}\big) \text{\   is a Lagrangian plane in\ }\dL^2(\partial \Gamma).
  \end{equation}
  Moreover, the mapping $H\mapsto \cL({H})$ is injective. 

  2) Conversely, if $\cL\subset \dL^2(\partial \Gamma)$ is a Lagrangian plane then the operator 
  \begin{align}\label{b7}
    H({\cL}):=-\frac{d^2}{dx^2}+q(x),\ \dom\big(H({\cL})\big)=\{f\in \hatt{H}^2(\Gamma): \tr f\in\cL\},
  \end{align}
  is a self-adjoint extension of $H_{min}$. 
\end{proposition}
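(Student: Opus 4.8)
The plan is to deduce the correspondence from the classical von Neumann extension theory together with the identification, furnished by the second Green's identity \eqref{vv5}, of the boundary form of $H_{max}$ with the symplectic form $\omega$ (this is exactly the boundary-triple picture, the triple being $L^2(\partial\Gamma)$ together with $\gamma_D,\gamma_N$). The first step is to record the consequence of \eqref{vv5} that
\begin{equation*}
  \langle H_{max}f,g\rangle_{L^2(\Gamma)}-\langle f,H_{max}g\rangle_{L^2(\Gamma)}=\omega(\tr f,\tr g),\qquad f,g\in\hatt H^2(\Gamma)=\dom(H_{max}),
\end{equation*}
where one uses that $q$ is real so the potential terms cancel, and that $\tr f=(\gamma_D f,\gamma_N f)$. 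Alongside this identity I will use two structural facts about the trace map: (i) $\ker(\tr)=\hatt H^2_0(\Gamma)=\dom(H_{min})$, which is just \eqref{b4}; and (ii) $\tr\colon\hatt H^2(\Gamma)\to L^2(\partial\Gamma)\oplus L^2(\partial\Gamma)\cong\C^{4|\cE|}$ is \emph{surjective}. For a subspace $\cM\subset\dL^2(\partial\Gamma)$ I write $\cM^{\omega}:=\{\psi:\omega(\psi,\phi)=0\text{ for all }\phi\in\cM\}$, so ``$\cM$ is Lagrangian'' means $\cM=\cM^{\omega}$.

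For part 1), let $H$ be a self-adjoint extension; then $\dom(H_{min})\subset\dom(H)\subset\dom(H_{max})$, so $\cL(H):=\tr(\dom H)$ is a well-defined (finite-dimensional, hence closed) linear subspace. Isotropy $\cL(H)\subset\cL(H)^{\omega}$ is immediate from symmetry of $H$ via the displayed identity. For the reverse inclusion, take $\phi\in\cL(H)^{\omega}$ and, by surjectivity of $\tr$, choose $g\in\hatt H^2(\Gamma)$ with $\tr g=\phi$; then the identity gives $\langle H_{max}g,f\rangle=\langle g,H_{max}f\rangle=\langle g,Hf\rangle$ for every $f\in\dom(H)$, hence (conjugating) $g\in\dom(H^*)=\dom(H)$, so $\phi=\tr g\in\cL(H)$. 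Thus $\cL(H)$ is Lagrangian. Injectivity of $H\mapsto\cL(H)$: if $\cL(H_1)=\cL(H_2)$ and $f\in\dom(H_1)$, pick $g\in\dom(H_2)$ with $\tr g=\tr f$; then $f-g\in\ker(\tr)=\dom(H_{min})\subset\dom(H_2)$, so $f\in\dom(H_2)$. By symmetry $\dom(H_1)=\dom(H_2)$, and since both operators act as $-d^2/dx^2+q$ there, $H_1=H_2$.

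For part 2), let $\cL$ be Lagrangian and define $H(\cL)$ by \eqref{b7}. Since $0\in\cL$, $\dom(H_{min})=\ker(\tr)\subset\dom(H(\cL))$, so $H(\cL)$ is densely defined and $H_{min}\subset H(\cL)\subset H_{max}$. Isotropy of $\cL$ and the key identity give $\langle H(\cL)f,g\rangle=\langle f,H(\cL)g\rangle$ for $f,g\in\dom(H(\cL))$, so $H(\cL)$ is symmetric. For self-adjointness, $H_{min}\subset H(\cL)$ yields $H(\cL)^*\subset H_{min}^*=H_{max}$, so $\dom(H(\cL)^*)\subset\hatt H^2(\Gamma)$; if $g\in\dom(H(\cL)^*)$ then for all $f$ with $\tr f\in\cL$ the identity gives $\omega(\tr f,\tr g)=\langle H_{max}f,g\rangle-\langle f,H_{max}g\rangle=0$, and by surjectivity of $\tr$ this means $\omega(\psi,\tr g)=0$ for all $\psi\in\cL$, i.e.\ $\tr g\in\cL^{\omega}=\cL$, whence $g\in\dom(H(\cL))$. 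Therefore $\dom(H(\cL)^*)=\dom(H(\cL))$ and $H(\cL)=H(\cL)^*$.

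The only genuinely non-formal ingredient is the surjectivity of $\tr$; this is a local matter, reducing to the claim that on a single interval $(0,\ell)$ the map $H^2(0,\ell)\ni u\mapsto(u(0),u(\ell),u'(0),u'(\ell))\in\C^4$ is onto, witnessed e.g.\ by cubic Hermite interpolants (or by multiplying prescribed affine boundary data by smooth cutoffs supported near each endpoint). The only other point requiring care is the bookkeeping of signs and conjugations when passing from \eqref{vv5} to the boundary-form identity above; the remainder is a mechanical application of the definitions of $H_{min}$, $H_{max}$ and the Hilbert-space adjoint, together with the elementary fact that over $\C^{4|\cE|}$ the relation $\cM=\cM^{\omega}$ for the nondegenerate form $\omega$ already forces $\dim\cM=2|\cE|$, so ``Lagrangian plane'' and ``maximal isotropic subspace'' coincide in this setting.
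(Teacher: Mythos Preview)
Your proof is correct and follows essentially the same approach as the paper: both rest on the second Green's identity $\langle H_{max}f,g\rangle-\langle f,H_{max}g\rangle=\omega(\tr f,\tr g)$ together with the surjectivity of $\tr$, and both derive isotropy from symmetry and maximality from the adjoint condition. Your injectivity argument (using $\ker(\tr)=\dom(H_{min})$ to show $f-g\in\dom(H_2)$) is a bit more direct than the paper's, which instead observes that the symmetric extension $H_{max}|_{\dom(H_1)+\dom(H_2)}$ must coincide with each maximal $H_k$; and you are slightly more explicit than the paper in justifying $\dom(H(\cL)^*)\subset\hatt H^2(\Gamma)$ via $H(\cL)^*\subset H_{min}^*=H_{max}$ --- but these are cosmetic differences, not a different route.
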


We recall a related description of the domain of $H(\cL)$: There exist
three orthogonal projections $P_D, P_N, P_R$ acting in
$L^2(\partial\Gamma)$, referred to as the Dirichlet, Neumann, and
Robin projections respectively, such that
\begin{equation}\label{new4}
  L^2(\partial\Gamma)=\ran(P_D)\oplus\ran(P_N)\oplus\ran(P_R),
\end{equation}
and an invertible, self-adjoint matrix $Q$ such that
\begin{equation}\label{new5}
  \dom(H(\cL))=\left\{f\in\hatt H^2(\Gamma)\Big|\begin{matrix}
      P_D \gamma_D f=0,\,P_N \gamma_Nf=0,\,\\
      P_R \gamma_Nf= Q P_R\gamma_Df
    \end{matrix}\right\},
\end{equation}
cf., e.g., \cite[Theorem 1.1.4]{BK}.  In this notation for arbitrary
$f\in\dom(H(\cL, \ell))$ one has
\begin{equation}\label{aa14}
    \langle f, H(\cL,\ell)f\rangle_{L^2(\Gamma(\ell))}
    = \|f'\|^2_{L^2(\Gamma(\ell))} 
    + \langle f, q^{\ell}f\rangle_{L^2(\Gamma(\ell))}
    + \langle P_R \gamma_D^{\ell} f, Q P_R \gamma_D^{\ell} f\rangle_{L^2(\partial\Gamma)}.
\end{equation}

The vertex conditions are called {\it scale invariant} if $P_R=0$,
cf. \cite[Section 1.4.2]{BK}.  Conditions are scale invariant if and
only if the corresponding Lagrangian plane $\cL \subset L^2(\partial
\Gamma)\oplus L^2(\partial \Gamma)$ decomposes as $\cL = \cL_D \oplus
\cL_N$, see Proposition~\ref{gh1} in Section~\ref{sec5}.

Next, we list some standard conditions at a vertex $v$ (here
$\partial_{\nu} f$ denotes the inward derivative of $f$):
\begin{itemize}
\item  $\delta$-type condition with coupling constant $\alpha\in\bbR$:
  \begin{equation}
    \label{gh8}
    \begin{cases}
      f\text{\ is continuous at\ }v,\\
      \sum\limits_{v\sim e}\partial_{\nu}f(v)=\alpha f(v),
    \end{cases}
  \end{equation}
\item Neumann--Kirchhoff condition is given by \eqref{gh8} with
  $\alpha=0$,
  \begin{equation}
    \label{gh9}
    \begin{cases}
      f\text{\ is continuous at\ }v,\\
      \sum\limits_{v\sim e}\partial_{\nu}f(v)=0,
    \end{cases}
  \end{equation}
\item $\delta'$-type condition with coupling constant $\alpha\in\bbR$:
  \begin{equation}
    \label{gh10}
    \begin{cases}
      \partial_{\nu}f\text{\ is continuous at\ }v,\\
      \sum\limits_{v\sim e}f(v)=\alpha \partial_{\nu}f(v),
    \end{cases}
  \end{equation}
\item anti-Kirchhoff condition is given by \eqref{gh10} with
  $\alpha=0$, 
  \begin{equation}
    \label{gh11}
    \begin{cases}
      \partial_{\nu}f\text{\ is continuous at\ }v,\\
      \sum\limits_{v\sim e}f(v) = 0,
    \end{cases}
  \end{equation}
\item Dirichlet conditions
  \begin{equation}
    \label{gh12}
    f_e(v)=0, \qquad \mbox{for all }e\sim v,
  \end{equation}
\item $\theta$-periodic (magnetic) condition at a vertex of degree 2
  with incident edges $e_1$ and $e_2$ is given by
  \begin{equation}
    \label{gh14}
    \begin{cases}
      f_{e_1}(v) = e^{i\theta} f_{e_2}(v), \qquad \theta\in\bbR, \\
      \partial_\nu f_{e_1}(v) = -e^{i\theta} \partial_\nu f_{e_2}(v).
    \end{cases}
  \end{equation}
\end{itemize}

%%%%%%%%%%%%%%%%%%%%%%%%%%%%%%%%%%%%%%%%%%%%%%%%%%%%%%%%%%%%%%%%%%%%%%%%%%%%%%
\subsection{Schr\"odinger Operators on Graphs With Vanishing Edges.} 
% In this subsection we further extend the class of metric graphs associated with discrete graph $\cG$ by including metric graphs with zero-length edges. Let 
% \begin{equation}\no
% \ell=(\ell_e)_{e\in\cE}\in\bbR_{\geq 0}^{|\cE|}\setminus\{0\}
% \end{equation}
%  denote the vector of edge lengths. Assuming that some of the edges are of zero lengths, we let $\Gamma(\ell)$ denote a metric graph whose connectivity is the same as that of $\cG$ and whose vertices connected by edges of length zero are identified. %The set of nonzero edges of  the graph $\Gamma(\ell)$ is denoted by $\cE({\ell})$. 

The main purpose of this paper is to investigate convergence of the
spectral projections of the Schr\"odinger operators on $\Gamma(\ell)$,
where $\ell=(\ell_e)_{e\in\cE}$ denotes the vector of edge lengths, as
\begin{equation}
  \label{eq:l_ltilde}
   \ell\rightarrow\wti \ell \text{\ in\ }\bbR^{|\cE|},
   \qquad \mbox{where}\quad
   \ell\in\bbR^{|\cE|}_{>0}
   \quad\mbox{and}\quad
   \wti \ell=(\wti\ell_e)_{e\in\cE}\in\bbR_{\geq 0}^{|\cE|}\setminus\{0\}.
\end{equation}
Note that the components of $\ell$ are all positive, whereas some, but
not all, components of $\wti \ell$ are equal to zero.  The
``limiting'' metric graph $\Gamma(\wti\ell)$ is based on the discrete
graph $\wti\cG$ obtained from $\cG$ by contracting the edges with
$\wti\ell_e=0$.

We emphasize that the main difficulty is in dealing with the edges
whose lengths tend to zero.  For notational convenience we label edges
of the graph $\cG$ so that the first $m$ ones are rescaled but not
completely shrunk to zero, and the remaining $|\cE|-m$ edges are being
shrunk to zero as $\ell\rightarrow\wti\ell$, that is, we write
\begin{equation}
\wti \ell=(\wti \ell_{e_1}, ..., \wti \ell_{e_m}, 0, ... 0)^{\top}, \label{3.1}
\end{equation}
where the first $m\geq1$ components of $\wti\ell$ are positive.
To simplify notation we denote the set of the {\it non-vanishing} edges of $\Gamma( \ell)$ by
\begin{equation}
\cE_{+}:=\{e_1,...,e_m\},
\end{equation}
and the {\it vanishing} ones by
\begin{equation}
\cE_{0}:=\{e_{m+1},...,e_{|\cE|}\}.\label{3.3}
\end{equation}
Let $\Gamma_{+}(\ell)$ be the subgraph of $\Gamma(\ell)$ with the set of edges $\cE_+$, and let $\Gamma_{0}(\ell)$ be the subgraph of $\Gamma(\ell)$ with the set of edges $\cE_0$. In particular, one has
\begin{equation}
\Gamma(\ell)=\Gamma_{+}(\ell)\cup\Gamma_{0}(\ell).\label{3.4}
\end{equation}
Let $\ell_+$ denote the vector of edge lengths of graph $\Gamma_+(\ell)$, and $\ell_0$ denote the vector of edge lengths of $\Gamma_0(\ell)$, that is, $\ell=(\ell_+,\ell_0)$.  Next, since all components of $\ell$ are positive, the spaces $\partial\Gamma(\ell)$, $ \partial\Gamma_{+}(\ell)$, and $\partial\Gamma_{0}(\ell)$ do not depend on $\ell$. We therefore drop $\ell$ and  write $\partial\Gamma$, $ \partial\Gamma_{+}$, and $\partial\Gamma_{0}$ respectively. Then, in particular,  $\partial\Gamma=\partial\Gamma_{+}\cup\partial\Gamma_{0}$ and
\begin{align}
&L^2(\partial\Gamma)=L^2(\partial\Gamma_{+})\oplus L^2(\partial\Gamma_{0})\label{vv15},
%&L^2(\Gamma(\ell))=L^2(\Gamma_{+}(\ell))\oplus L^2(\Gamma_{0}(\ell))
\end{align}
where $L^2$ spaces correspond to the discrete Dirac measure with support $\cup_{e\in \cE} \{a_e, b_e\}$.
We notice that all spaces in \eqref{vv15} are finite-dimensional since
$\Gamma$ is a compact graph. Let $P_+$ be the orthogonal projection
acting in $L^2(\partial\Gamma)$ with
$\ran(P_+)=L^2(\partial\Gamma_+)\oplus \{0\}$, and let
$P_0:=I_{L^2(\partial\Gamma)}-P_+$. We recall the notation
$\dL^2(\partial\Gamma):=L^2(\partial\Gamma)\oplus L^2(\partial\Gamma)$
and we write $\prescript{d}{}P$ for the operator $P\oplus P$ acting in
$\dL^2(\partial\Gamma)$.
In particular, for the symplectic form \eqref{vv9}, one has
\begin{equation}
  \omega(u,v)=\omega(\dP_+u,\dP_+v)+\omega(\dP_0u,\dP_0v), \label{aa19}
\end{equation}
for all $u,v \in\, \dL^2(\partial\Gamma)$. 

To complete the setting, let us define the Schr\"odinger operators corresponding to each lengths vector $\ell\in\bbR_{>0}^{|\cE|}$. To this end,  let us fix a family of potentials   $q^{\ell}\in L^{\infty}(\Gamma(\ell);\bbR)$ corresponding to the graphs with positive edge lengths, and the limiting potential $q^{\wti\ell}\in L^{\infty}(\Gamma(\wti\ell);\bbR)$ satisfying 
\begin{equation}
\begin{split}\label{pot}
&\|q^{\ell}\|_{ L^{\infty}(\Gamma(\ell);\bbR)}=\cO(1)\text{\ as\ }\ell\rightarrow\wti\ell,\\
\sup\limits_{y\in [0,1]}|q_e^{\ell}(\ell_e y)&-q_e^{\wti\ell}(\wti\ell_e y)|=o(1) \text{\ as\ }\ell_e\rightarrow \wti\ell_e, \text{\ for all}\ e\in \cE_{+}.
\end{split}
\end{equation}
These conditions hold, for example, if the family $q^{\ell}$ is obtained by rescaling a fixed potential. Next,  we fix a Lagrangian plane 
\begin{equation}\label{3.9}
\cL\subset\, \dL^2(\partial\Gamma).
\end{equation}
For $\ell\in \bbR_{>0}^{|\cE|}$ let $H(\cL, \ell)$ denote the self-adjoint Schr\"odinger operator acting in $L^2(\Gamma(\ell))$ and given by the formulas
\begin{align}
\begin{split}	\label{c6}
&H({\cL}, \ell):=-\frac{d^2}{dx^2}+q^{\ell}(x),\\
\dom\big(H({\cL},&\,\ell)\big)=\{f\in \hatt{H}^2(\Gamma(\ell)): \tr^{\ell} f \in\cL\},
\end{split}
\end{align}
where the trace operator $\tr^{\ell}=(\gamma_D^{\ell}, \gamma_N^{\ell})^{\top}$ acts from $\hatt{H}^2(\Gamma(\ell))$ to $L^2(\partial\Gamma)\oplus L^2(\partial\Gamma)$ as indicated in \eqref{2.4new}. In particular, the norm of $\tr^{\ell}$ depends on $\ell$.
%The potential $q^{\ell}\in L^{\infty}(\Gamma(\ell))$ is a rescaled version of a fixed real-valued, %bounded function $q^{\circ}\in L^{\infty}(\Gamma(\ell^0))$  originally defined on a fixed graph %$\Gamma(\cG;\ell^{0})$, $\ell^0\in\bbR_{>0}^{|\cE|}$, that is,
%\begin{equation}
%q_e^{\ell}(x):=q^{\circ}_e\left({x{\ell}^{0}_e}/{\ell_e}\right),\,x\in e,\, e\in\cE(\ell).
%\end{equation}
The resolvent of $H(\cL,\ell)$ is denoted by
\begin{equation}\label{aa6}
R(\cL, \ell, z):=\left(H(\cL, \ell)-z\right)^{-1},\ z\in\C\setminus\spec(H(\cL, \ell)).
\end{equation}

%%%%%%%%%%%%%%%%%%%%%%%%%%%%%%%%%%%%%%%%%%%%%%%%%%%%%%%%%%%%%%%%%%%%%%%%%%%%%%%%%
\section{Main Results}
\label{sec:main_results}
In this section we collect the statements of our main results together
with examples that illustrate their application.  The proofs will be
provided in subsequent sections.

First, we define an operator $H(\wti\cL, \wti \ell)$ on the
graph $\Gamma( \wti\ell)$, which will serve as the limiting operator for $H(\cL,\ell)$ as $\ell\rightarrow\wti\ell$. The definition is motivated by the heuristic
observation, made in the Introduction, that the limiting boundary conditions should be of the
form~\eqref{eq:equal_on_vanishing}.

\begin{theorem}\label{a30.1}
  Assume that $\cL\subset\, \dL^2(\partial\Gamma)$ is a Lagrangian
  plane with respect to symplectic form $\omega$,
  cf. \eqref{vv9}-\eqref{vv10}. Let
  \begin{equation}\label{u6}
    \wti\cL := \{ (\phi_1|_{\partial \Gamma+},\phi_2|_{\partial
      \Gamma+}): 
    (\phi_1,\phi_2) \in \cL \cap (D_0\oplus N_0)\},
  \end{equation}
  where
  \begin{align}
    \label{eq:D0_def}
    D_0&=\{\phi_1\in  L^2(\partial\Gamma): \phi_1(a_e)=\phi_1(b_e),\
         e\in\cE_0 \},\\
    \label{eq:N0_def}      
    N_0&=\{\phi_2\in  L^2(\partial\Gamma): \phi_2(a_e)=-\phi_2(b_e),\  e\in\cE_0 \}.
  \end{align}
  Then $\wti\cL$ is a Lagrangian plane in $\dL^2(\partial\Gamma_+)$
  with respect to the symplectic form $\omega_{\Gamma_+}$ obtained by
  restricting $\omega$ to $\dL^2(\partial\Gamma_+)$.
  % defined by
  % \begin{equation}
  %   \omega_{\Gamma_+} ( (\phi_1, \phi_2), (\psi_1, \psi_2))
  %   :=\int_{\partial\Gamma_+} \overline{\phi_2}\psi_1-\overline{\phi_1}\psi_2,\\
  % \end{equation}
  % where $(\phi_1, \phi_2), (\psi_1, \psi_2)\in L^2(\partial\Gamma_+)\oplus L^2(\partial\Gamma_+)$.
  Therefore, the operator $H(\wti \cL, \wti \ell)$ acting in $L^2(\Gamma(\wti\ell))$ and given by
  \begin{align}
    \begin{split}\label{u3}
      &H({\wti\cL},\wti \ell):=-\frac{d^2}{dx^2}+q^{\wti\ell},\\
      &\dom\big(H({\wti\cL},\wti\ell)\big)=\{f\in \hatt{H}^2(\Gamma(\wti\ell)): \tr^{\wti\ell}(f)\in\wti\cL\},
    \end{split}	
  \end{align}
  is self-adjoint.
\end{theorem}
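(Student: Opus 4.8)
The plan is to recognize $\wti\cL$ as the image of $\cL$ under a \emph{linear symplectic reduction}, for which the Lagrangian property is automatic, and then to invoke part~2) of Proposition~\ref{b1} for self-adjointness of $H(\wti\cL,\wti\ell)$. First I would fix the ambient decomposition: by \eqref{vv15} we have $\dL^2(\partial\Gamma)=\dL^2(\partial\Gamma_+)\oplus\dL^2(\partial\Gamma_0)$, and by \eqref{aa19} the form splits as $\omega(u,v)=\omega_{\Gamma_+}(\dP_+u,\dP_+v)+\omega_{\Gamma_0}(\dP_0u,\dP_0v)$, where $\omega_{\Gamma_\pm}$ are the restrictions of $\omega$ to $\dL^2(\partial\Gamma_\pm)$; each restriction is nondegenerate, since on every edge the relevant form is a nonsingular (Hermitian-)symplectic form of the type $J\otimes I_2$. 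Then I consider the subspace $W:=D_0\oplus N_0\subset\dL^2(\partial\Gamma)$ of \eqref{eq:D0_def}--\eqref{eq:N0_def}. Because the relations defining $D_0$ and $N_0$ involve only the $\cE_0$-coordinates, $W$ decomposes as $W=\dL^2(\partial\Gamma_+)\oplus L$, where $L\subset\dL^2(\partial\Gamma_0)$ consists of pairs $(\phi_1,\phi_2)$ with $\phi_1$ symmetric ($\phi_1(a_e)=\phi_1(b_e)$) and $\phi_2$ antisymmetric ($\phi_2(a_e)=-\phi_2(b_e)$) across each vanishing edge.

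The one computation that really needs attention is that $L$ is a Lagrangian plane of $(\dL^2(\partial\Gamma_0),\omega_{\Gamma_0})$. This decouples over the vanishing edges, so it reduces to a $4$-dimensional fact on a single $e\in\cE_0$: on $\C^4\cong\dL^2(\{a_e,b_e\})$ with coordinates $(\phi_1(a_e),\phi_1(b_e),\phi_2(a_e),\phi_2(b_e))$ and the form $\omega((\phi_1,\phi_2),(\psi_1,\psi_2))=\overline{\phi_2(a_e)}\psi_1(a_e)+\overline{\phi_2(b_e)}\psi_1(b_e)-\overline{\phi_1(a_e)}\psi_2(a_e)-\overline{\phi_1(b_e)}\psi_2(b_e)$, the $2$-dimensional span of $(1,1,0,0)$ and $(0,0,1,-1)$ is isotropic and coincides with its own $\omega$-orthogonal complement. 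Summing over $e\in\cE_0$ gives $L=L^{\omega_{\Gamma_0}}$. Consequently $W$ is coisotropic in $(\dL^2(\partial\Gamma),\omega)$ with $W^{\omega}=\{0\}\oplus L\subset W$, the quotient map $W\to W/W^{\omega}$ is exactly the restriction of $\dP_+$ to $W$, and the reduced form on $W/W^{\omega}\cong\dL^2(\partial\Gamma_+)$ is precisely $\omega_{\Gamma_+}$ (the cross terms vanish by the splitting, and the $\dL^2(\partial\Gamma_0)$-term vanishes because $L$ is isotropic).

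It then remains to identify $\wti\cL$ with the symplectic reduction of $\cL$ by $W$. By \eqref{u6} one has $\wti\cL=\dP_+(\cL\cap W)$, and since $\dP_+(W^{\omega})=\{0\}$ this is the image of $\cL\cap W+W^{\omega}$ under the quotient map, i.e. $\wti\cL=(\cL\cap W+W^{\omega})/W^{\omega}$ under the identification above. The linear symplectic reduction lemma (recalled in Section~\ref{sec5}, cf.\ \cite{McS}) asserts that if $\cL$ is a Lagrangian plane and $W$ a coisotropic subspace of a finite-dimensional (Hermitian-)symplectic space, then $(\cL\cap W+W^{\omega})/W^{\omega}$ is a Lagrangian plane of $W/W^{\omega}$. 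Hence $\wti\cL$ is Lagrangian in $(\dL^2(\partial\Gamma_+),\omega_{\Gamma_+})$. Applying Proposition~\ref{b1}, part~2), on the graph $\Gamma(\wti\ell)$ with the Lagrangian plane $\wti\cL$ yields that $H(\wti\cL,\wti\ell)$ defined in \eqref{u3} is self-adjoint.

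I expect the coisotropy computation --- equivalently the per-edge identity $L=L^{\omega_{\Gamma_0}}$ --- together with the bookkeeping identifying $\dP_+|_W$ with the reduction quotient, to be the only nontrivial point; the rest is a direct appeal to the reduction lemma and Proposition~\ref{b1}. If one prefers not to quote the reduction lemma as a black box, the isotropy half of the claim can be verified by hand: for $\phi,\psi\in\cL\cap W$, \eqref{aa19} gives $0=\omega(\phi,\psi)=\omega_{\Gamma_+}(\dP_+\phi,\dP_+\psi)+\omega_{\Gamma_0}(\dP_0\phi,\dP_0\psi)$ and the last term vanishes because $\dP_0\phi,\dP_0\psi\in L$ and $L$ is isotropic; the remaining maximality (a dimension count) is exactly what the reduction lemma supplies.
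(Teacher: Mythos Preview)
Your proposal is correct and follows essentially the same route as the paper: set $W=D_0\oplus N_0$, verify that $W$ is coisotropic with $W^{\omega}=\{0\}\oplus L$ (the paper computes $W^{\circ}$ directly, you obtain it via the per-edge Lagrangian check for $L$), identify $W/W^{\omega}\cong\dL^2(\partial\Gamma_+)$ via $\dP_+$, and invoke the symplectic reduction lemma (Proposition~\ref{prop:symplectic_reduction}) together with Proposition~\ref{b1}. Your per-edge verification that $L$ is Lagrangian in $\dL^2(\partial\Gamma_0)$ is a slightly more detailed rendering of the paper's direct computation~\eqref{eq:Wcirc}, but the structure of the argument is the same.
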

\begin{proof}
  The proof, based on linear symplectic reduction, is provided on page
  \pageref{pga30.1}.
\end{proof}

The main result of this paper is the convergence of the spectral projections of the
self-adjoint Schr\"odinger operators $H(\cL,\ell)$ to those of
$H(\wti\cL,\wti\ell)$.  It will be established under the following
condition.
% \begin{condition}\label{aa10}
%   Assume that for some (hence all) $z\in\C$ with $\Im z\not=0$ one has
%   \begin{equation}\label{aa10eq}
%     \|R(\cL, \ell, z)\|_{\cB{(L^2(\Gamma(\ell)),
%         L^{\infty}(\Gamma(\ell)))}}
%     \underset{\ell\rightarrow\wti\ell}{=}\cO(1).
%   \end{equation}
% \end{condition}
% It holds, in particular, whenever the following Condition is satisfied. 

\begin{condition}[Non-resonance Condition]\label{newhyp}
  Suppose that for all $(\phi_1,
  \phi_2)\in\cL\cap(D_0\oplus N_0)$ such that $\phi_1|_{\partial
    \Gamma+}=\phi_2|_{\partial \Gamma+}=0$ one has $\phi_1=0$. 
\end{condition}

Informally, this condition says that if a function from the domain of
$H$ is small on non-vanishing edges, then its value (but not,
necessarily, its derivative) should also be small on the vanishing
edges.  Let us emphasize the striking similarity between
Condition~\ref{newhyp} and the definition of $\wti\cL$ in equation
(\ref{u6}).  As explained in Remark~\ref{rem:generic}, this condition
is generic among all self-adjoint
Schr\"odinger operators on $\Gamma$ (as parameterized by the
Lagrangian planes $\cL$).

Condition~\ref{newhyp} is also easy to check on important classes
of graphs.  The first class consists of the graphs with scale
invariant conditions, in which case Condition~\ref{newhyp} is also
necessary.

\begin{lemma}
  \label{y21} 
  Suppose that the Robin part of $H(\cL,\ell)$ is absent, that is,
  $P_R=0$ in \eqref{new5}. Then Condition~\ref{newhyp} holds if and
  only if the zero function is the only function satisfying the
  boundary conditions $\tr(f)\in \cL$, that is constant on each edge
  of $\Gamma_0$ and vanishes on $\Gamma_+$.
\end{lemma}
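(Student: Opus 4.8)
The plan is to exploit the hypothesis $P_R=0$ to split the Lagrangian condition $\tr f\in\cL$ into an independent pair of conditions --- one on the Dirichlet data $\gamma_D f$ and one on the Neumann data $\gamma_N f$ --- and then to match the two sides of the claimed equivalence literally, term by term. First I would record the consequence of scale invariance: by Proposition~\ref{gh1} the plane $\cL$ decomposes as $\cL=\cL_D\oplus\cL_N$ with $\cL_D\subset L^2(\partial\Gamma)\oplus\{0\}$ and $\cL_N\subset\{0\}\oplus L^2(\partial\Gamma)$, equivalently (using \eqref{new5} and surjectivity of $\tr\colon\hatt H^2(\Gamma)\to\dL^2(\partial\Gamma)$)
\[
  \cL=\big\{(\phi_1,\phi_2)\in\dL^2(\partial\Gamma):P_D\phi_1=0,\ P_N\phi_2=0\big\}.
\]
In particular $(\phi_1,\phi_2)\in\cL$ if and only if $(\phi_1,0)\in\cL$ and $(0,\phi_2)\in\cL$.

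Next I would rephrase Condition~\ref{newhyp} in this situation. Given $\phi_1\in L^2(\partial\Gamma)$ with $\phi_1\in D_0$, $P_D\phi_1=0$ and $\phi_1|_{\partial\Gamma_+}=0$, the pair $(\phi_1,0)$ lies in $\cL\cap(D_0\oplus N_0)$ (note $0\in N_0$) and trivially has vanishing restriction to $\partial\Gamma_+$; conversely the first component of any pair appearing in Condition~\ref{newhyp} is exactly of this form. Hence, when $P_R=0$, Condition~\ref{newhyp} is equivalent to the implication
\[
  \phi_1\in D_0,\quad P_D\phi_1=0,\quad \phi_1|_{\partial\Gamma_+}=0 \ \Longrightarrow\ \phi_1=0 .
\]

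The last step is to identify the set of functions $f$ described in the lemma with the set of such $\phi_1$ via $f\mapsto\gamma_D f$. If $f$ is constant on every edge of $\Gamma_0$ and vanishes identically on $\Gamma_+$, then $f'\equiv0$ on $\Gamma_0$ and $f\equiv0$ on $\Gamma_+$ give $\gamma_N f=0$; the two endpoint values of a constant coincide, so $\gamma_D f\in D_0$; and $\gamma_D f|_{\partial\Gamma_+}=0$. Thus $\tr f=(\gamma_D f,0)$, and $\tr f\in\cL$ forces $P_D\gamma_D f=0$. Conversely, given $\phi_1\in D_0$ with $P_D\phi_1=0$ and $\phi_1|_{\partial\Gamma_+}=0$, the function equal to $0$ on $\Gamma_+$ and to the constant $\phi_1(a_e)=\phi_1(b_e)$ on each $e\in\cE_0$ lies in $\hatt H^2(\Gamma)$, is of the required type, and satisfies $\gamma_D f=\phi_1$ and $\tr f=(\phi_1,0)\in\cL$. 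Since a constant function on an edge is determined by its endpoint value, $\gamma_D f=0$ forces $f=0$ on this class, so $f\mapsto\gamma_D f$ is a linear bijection between the two sets carrying $0$ to $0$. Combined with the reformulation of the previous paragraph, this proves that Condition~\ref{newhyp} holds if and only if the only such $f$ is the zero function.

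The argument is mostly bookkeeping once scale invariance is used; the step I expect to require the most care --- and the genuine content of the lemma --- is the equivalence in the second paragraph, where one must check that choosing $\phi_2=0$ loses nothing. This is precisely where the decoupling $\cL=\cL_D\oplus\cL_N$ enters (its failure for general vertex conditions is what makes the non-scale-invariant case harder, and is why the lemma assumes $P_R=0$), and one must also be attentive to the fact that ``$f$ vanishes on $\Gamma_+$'' is a pointwise statement on all of $\Gamma_+$, not merely a condition on $\partial\Gamma_+$.
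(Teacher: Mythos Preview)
Your proof is correct and follows essentially the same approach as the paper's: both use the scale-invariant decomposition $\cL=\cL_D\oplus\cL_N$ (Proposition~\ref{gh1}) to reduce Condition~\ref{newhyp} to a statement about $\phi_1$ alone by setting $\phi_2=0$, and then pass between such $\phi_1$ and the edgewise-constant functions $f$ supported on $\Gamma_0$ via the obvious bijection $f\mapsto\gamma_D f$. Your presentation is somewhat more explicit in articulating the intermediate equivalence, but the argument is the same.
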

\begin{proof}On page \pageref{pgy21}. \end{proof}
The second class includes connected graphs with a continuity condition imposed at
every vertex.

\begin{lemma}\label{1a}
  Suppose that every vanishing edge $e\in\cE_0$ belongs to a path
  $\cP_e$ that contains at least one non-vanishing edge, and along
  which the function  $|f|$ is continuous for every $f\in\dom\big(H(\cL,\ell)\big)$.
  Then Condition~\ref{newhyp} holds.
\end{lemma}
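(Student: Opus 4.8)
The plan is to realise the abstract boundary datum $(\phi_1,\phi_2)$ appearing in Condition~\ref{newhyp} as the trace of a genuine element $f\in\dom\big(H(\cL,\ell)\big)$, and then to propagate the vanishing of $|f|$ along the path $\cP_e$, starting from the non-vanishing edge that $\cP_e$ is assumed to contain and ending at the vanishing edge $e$ itself.

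First I would record a consequence of Proposition~\ref{b1}: the edgewise trace map $\tr^{\ell}$ is surjective onto $\dL^2(\partial\Gamma)$ (on a single interval one may prescribe the function value and the derivative at each endpoint independently), hence $\tr^{\ell}\big(\dom(H(\cL,\ell))\big)=\cL$. Consequently, for any $(\phi_1,\phi_2)\in\cL\cap(D_0\oplus N_0)$ satisfying $\phi_1|_{\partial\Gamma_+}=\phi_2|_{\partial\Gamma_+}=0$ I may fix $f\in\dom\big(H(\cL,\ell)\big)$ with $\gamma_D^{\ell}f=\phi_1$ and $\gamma_N^{\ell}f=\phi_2$; the continuity hypothesis of the lemma then applies to this particular $f$, and the goal reduces to showing $\phi_1(a_e)=\phi_1(b_e)=0$ for every $e\in\cE_0$.

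Next, I would fix $e\in\cE_0$ and write the path $\cP_e$ as an alternating sequence $v_0,g_1,v_1,\dots,g_k,v_k$ of vertices and edges, with $g_i$ incident to $v_{i-1}$ and $v_i$, with $e=g_{i_0}$ for some index $i_0$, and with $g_{j}\in\cE_+$ for at least one $j$. Three elementary observations drive the argument. (i) If $g_i\in\cE_+$ then $f$ vanishes at both $v_{i-1}$ and $v_i$, because $\phi_1|_{\partial\Gamma_+}=0$. (ii) If $g_i\in\cE_0$ then the two endpoint values of $f$ along $g_i$ coincide, hence have equal modulus, because $\phi_1\in D_0$, cf.\ \eqref{eq:D0_def}. (iii) Since $|f|$ is continuous along $\cP_e$, the value $|f(v_i)|$ at each interior vertex is unambiguous --- the limits along $g_i$ and along $g_{i+1}$ agree. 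From (i) we get $|f(v_{j-1})|=|f(v_j)|=0$; using (ii) and (iii) to transfer the value across each successive edge, a one-line induction in both directions along the path yields $|f(v_i)|=0$ for all $i=0,\dots,k$. In particular $|f|$ vanishes at the two endpoints of $e=g_{i_0}$, so $\phi_1(a_e)=\phi_1(b_e)=0$.

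Since $e\in\cE_0$ was arbitrary and $\phi_1$ was already assumed to vanish on $\partial\Gamma_+$, this gives $\phi_1=0$ in $L^2(\partial\Gamma)$, which is precisely the assertion of Condition~\ref{newhyp}. The argument is almost entirely bookkeeping; the points that need care are the identification of the abstract boundary points $a_e,b_e$ with the graph vertices terminating $\cP_e$, and the use of trace surjectivity to make the hypothesis ``$|f|$ is continuous along $\cP_e$'' available for the prescribed boundary data. In the degenerate situation where $\cP_e$ is a closed walk (for instance when $e$ is a loop) the same propagation applies verbatim, so no separate treatment is required; thus the only mild obstacle is organising the induction along $\cP_e$ cleanly.
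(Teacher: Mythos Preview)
Your proof is correct and follows the same approach as the paper's: realise the boundary datum as the trace of some $f\in\dom(H(\cL,\ell))$ via surjectivity of $\tr^{\ell}$ onto $\cL$, then propagate the vanishing of $|f|$ along each path $\cP_e$ from its non-vanishing edge to $e$. The paper's proof is extremely terse and only states the conclusion of this propagation without writing out the induction; your version spells out the three ingredients (i)--(iii) and the step along the path explicitly, which is exactly what the paper's one-sentence argument is implicitly invoking.
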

\begin{proof}On page \pageref{pgy21}. \end{proof}

%Postponing further discussion of the condition until the end of the
%section, we now turn to our main result. 
In order to formulate our results on spectral convergence, let us
introduce quasi-unitary operators $\cJ_{\ell}$ which lift the
functions defined on the limiting graph $\Gamma(\wti\ell)$ to the
approximating graph $\Gamma(\ell)$.  This is achieved by linear
scaling on the edges of $\Gamma_+$ and by extending functions by zero
on the edges of $\Gamma_0$, that is by defining
$\cJ_{\ell} \in \cB\left(L^2(\Gamma(\wti\ell)),
  L^2(\Gamma(\ell))\right)$ as follows:
\begin{equation}
  (\cJ_{\ell}f)(x)=\sum_{e\in\cE_+}
  \chi_{e}(x)\sqrt{\frac{\wti{\ell}_e}{\ell_e}}f\left(\frac{x\wti{\ell}_e}{\ell_e}\right),
  \quad x\in\Gamma(\ell),\label{d5}
\end{equation}
where $\chi_{e}(\cdot)$ is the characteristic function of
$e\subset\Gamma(\ell)$.  We remark that $\cJ_{\ell}^*\cJ_{\ell}$,
where $\cJ_{\ell}^*$ denotes the adjoint operator, is identity on
$L^2(\Gamma(\wti\ell))$, see Theorem~\ref{uu77} for details.

\begin{theorem}[Convergence of resolvents]\label{cc1}
  Assume Condition~\ref{newhyp}. Then, as $\ell\to\wti\ell$,
  \begin{equation}
    \label{eq:resolvent_convergence_AB}
    \begin{split}
      &\Big\|
      \cJ_{\ell}R(\wti\cL,\wti\ell,z) - R(\cL,\ell,z)\cJ_{\ell}
      \Big\|_{\cB\left(L^2(\Gamma(\wti\ell)),L^2(\Gamma(\ell))\right)}
      \to 0,\\
      &\big\|(I_{L^2(\Gamma(\cG;\ell))}-\cJ_{\ell}\cJ_{\ell}^*)
      R(\cL,\ell,z)\big\|_{\cB(L^2(\Gamma(\cG;\ell)))}
      \to 0,
    \end{split}
  \end{equation}
  where $R(\cL,\ell,z)$ and $R(\wti\cL,\wti\ell,z)$ denote the
  resolvents of the respective operators.
\end{theorem}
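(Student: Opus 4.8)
The plan is to establish the two resolvent estimates via the "generalized norm resolvent convergence" framework of Post, reducing matters to uniform-in-$\ell$ bounds on the edges of $\Gamma_+$ together with a careful analysis of the vanishing edges controlled by Condition~\ref{newhyp}. The key observation is that the quasi-unitary operators $\cJ_\ell$ and $\cJ_\ell^*$ intertwine the \emph{quadratic forms} $\mathfrak{h}_\ell$ and $\mathfrak{h}_{\wti\ell}$ of the operators $H(\cL,\ell)$ and $H(\wti\cL,\wti\ell)$ up to error terms that vanish as $\ell\to\wti\ell$. Concretely, for $f\in\dom(H(\wti\cL,\wti\ell))$ one checks, using \eqref{aa14}, that $\mathfrak{h}_\ell(\cJ_\ell f,\cJ_\ell g)-\mathfrak{h}_{\wti\ell}(f,g)$ is controlled by $\sqrt{\wti\ell_e/\ell_e}-1=o(1)$ (scaling the derivative norm on $\Gamma_+$), by the potential difference bound in \eqref{pot}, and by the fact that $\cJ_\ell f$ vanishes on $\Gamma_0$ so the Robin boundary term only sees $\partial\Gamma_+$ where $\tr^{\wti\ell}f\in\wti\cL$ guarantees the correct value by construction of $\wti\cL$ in \eqref{u6}. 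The abstract machinery (e.g.\ \cite[Ch.~4]{P12}) then converts form-convergence of quasi-unitarily identified operators, together with $\cJ_\ell^*\cJ_\ell=I$ (Theorem~\ref{uu77}) and $\|(I-\cJ_\ell\cJ_\ell^*)f\|\to 0$ on form-domain elements, into the two displayed operator-norm estimates in \eqref{eq:resolvent_convergence_AB}, provided one supplies the crucial coercivity/compactness input below.

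\textbf{Reduction to a uniform bound.} The heart of the argument is a uniform estimate of the form: there exist $C>0$ and $z_0\in\C$ (real, sufficiently negative) such that $\|R(\cL,\ell,z_0)\|_{\cB(L^2(\Gamma(\ell)),L^\infty(\Gamma(\ell)))}\le C$ for all $\ell$ close to $\wti\ell$, \emph{as a consequence of Condition~\ref{newhyp}}. This is precisely the "weaker but more technical sufficient condition" mentioned in the Introduction, and it is what prevents eigenfunctions from localizing on $\Gamma_0$. I would prove it by contradiction: if it fails, extract a sequence $\ell^{(n)}\to\wti\ell$ and normalized $g_n$ with $\|R(\cL,\ell^{(n)},z_0)g_n\|_{L^\infty}\to\infty$ (or, equivalently, work with would-be eigenfunctions $f_n$, $\|f_n\|_{L^2}=1$, of bounded energy). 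Using the quadratic form \eqref{aa14} and the boundedness of $q^\ell$ from \eqref{pot}, one gets $\|f_n'\|_{L^2(\Gamma(\ell^{(n)}))}=O(1)$ and $\langle P_R\gamma_D^{\ell^{(n)}}f_n,QP_R\gamma_D^{\ell^{(n)}}f_n\rangle=O(1)$. On the non-vanishing edges $\Gamma_+$ the lengths are bounded below, so standard Sobolev embedding gives uniform $H^1$ and hence $L^\infty$ control there, and after passing to a subsequence $f_n|_{\Gamma_+}$ converges. The delicate point is $\Gamma_0$: there the $H^1$-norm is bounded but the Sobolev constant blows up, so $f_n$ could in principle concentrate. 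However, $\|f_n'\|_{L^2(\Gamma_0)}=O(1)$ forces $f_n$ to be close (in sup norm on each $e\in\cE_0$, after rescaling to $[0,1]$) to a function that is \emph{constant} on each vanishing edge — more precisely $\mathrm{osc}_e(f_n)\le \ell_e^{1/2}\|f_n'\|_{L^2(e)}\to 0$. Taking limits of the boundary traces along the convergent subsequence produces a nonzero pair $(\phi_1,\phi_2)$ lying in $\cL\cap(D_0\oplus N_0)$ (the $D_0$ membership because the oscillation vanishes; the $N_0$ membership from the sum-of-derivatives structure of self-adjoint conditions after rescaling) with $\phi_1|_{\partial\Gamma_+}=\phi_2|_{\partial\Gamma_+}=0$ yet $\phi_1\neq0$ — contradicting Condition~\ref{newhyp}. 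This gives the uniform $L^2\to L^\infty$ resolvent bound, which in particular yields a uniform spectral gap below $z_0$ and the equicontinuity needed to run the form-convergence argument cleanly.

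\textbf{Assembling the estimates.} With the uniform bound in hand, the first estimate in \eqref{eq:resolvent_convergence_AB} follows from the resolvent identity: writing $R(\wti\cL,\wti\ell,z)=R(\wti\cL,\wti\ell,z_0)(I+(z-z_0)R(\wti\cL,\wti\ell,z))$ and similarly on $\Gamma(\ell)$, one reduces to $z=z_0$, where $\cJ_\ell R(\wti\cL,\wti\ell,z_0)-R(\cL,\ell,z_0)\cJ_\ell = R(\cL,\ell,z_0)\big((H(\cL,\ell)-z_0)\cJ_\ell - \cJ_\ell(H(\wti\cL,\wti\ell)-z_0)\big)R(\wti\cL,\wti\ell,z_0)$; the middle "defect" operator, applied to any $f$ and tested against any $g$, is exactly the form discrepancy estimated in the first paragraph, of size $o(1)\cdot(\|f\|+\|f'\|)(\|g\|+\ldots)$, and is absorbed by the uniform resolvent bounds on either side (which map $L^2$ into the form domain with controlled norm). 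The second estimate, $\|(I-\cJ_\ell\cJ_\ell^*)R(\cL,\ell,z)\|\to 0$, is the statement that the range of $R(\cL,\ell,z)$ asymptotically lives in the range of $\cJ_\ell$, i.e.\ that solutions $u=R(\cL,\ell,z)g$ become negligible on $\Gamma_0$: this again follows from the uniform energy bound, which forces $u|_{\Gamma_0}$ to be nearly constant on each vanishing edge, combined with the $L^2$-smallness $\|u\|_{L^2(\Gamma_0)}^2 = \sum_{e\in\cE_0}\ell_e(\text{const}_e^2+o(1))\to 0$ since $\ell_e\to 0$; making the "const" bounded uses once more that Condition~\ref{newhyp} rules out energy escaping onto $\Gamma_0$, and that $\cJ_\ell^*u$ recovers the $\Gamma_+$-part faithfully while $\cJ_\ell\cJ_\ell^*u$ reconstructs $u$ on $\Gamma_+$ up to the scaling error $o(1)$. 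The main obstacle, as indicated, is the contradiction argument in the second paragraph: extracting the limiting Lagrangian data from concentrating sequences on $\Gamma_0$ and verifying it lands in $\cL\cap(D_0\oplus N_0)$ with nonzero $\phi_1$ requires careful bookkeeping of the rescaled traces and the precise link between "nearly constant on $e$" and membership in $D_0$, and between the self-adjoint vertex relations and membership in $N_0$ — this is where the detailed functional-analytic estimates of Section~\ref{aux} are genuinely needed.
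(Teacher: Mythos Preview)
Your overall arc --- derive a uniform $L^2\to L^\infty$ resolvent bound from Condition~\ref{newhyp} by contradiction, then feed it into a generalized-norm-resolvent argument --- matches the paper. But two concrete steps do not go through as written, and each requires an idea you have not supplied.

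First, the form-intertwining of your first and third paragraphs breaks because $\cJ_\ell f$ is generally \emph{not} in the form domain of $H(\cL,\ell)$: the trace of $\cJ_\ell f$ equals $(\gamma_D^{\wti\ell}f,\gamma_N^{\wti\ell}f)$ on $\partial\Gamma_+$ and $(0,0)$ on $\partial\Gamma_0$, but the projections $P_D,P_R$ in \eqref{new5} act on all of $L^2(\partial\Gamma)$ and couple $\partial\Gamma_+$ with $\partial\Gamma_0$ at shared vertices, so neither $P_D\gamma_D(\cJ_\ell f)=0$ nor ``the Robin boundary term only sees $\partial\Gamma_+$'' need hold. The paper never places $\cJ_\ell v$ in any domain; it pairs $u=R(\cL,\ell,\bar z)f\in\dom(H(\cL,\ell))$ against $\cJ_\ell v$ (with $v=R(\wti\cL,\hatt\ell,z)g$), integrates by parts on $\Gamma_+$ only to get $Z=\int_{\partial\Gamma_+}(\overline{\partial_\nu u}\,v-\bar u\,\partial_\nu v)=\omega(\dP_+\tr u,\tr v)$, and then uses a \emph{symplectic lift}: since $\tr v\in\wti\cL=\dP_+\big(\cL\cap(D_0\oplus N_0)\big)$, choose (via a fixed right inverse of $\dP_+$) some $w\in\cL\cap(D_0\oplus N_0)$ with $\dP_+w=\tr v$; because $\tr u\in\cL$ and $w\in\cL$, $\omega(\tr u,w)=0$, so by \eqref{aa19} $Z=-\omega(\dP_0\tr u,\dP_0 w)$, which is then estimated on $\partial\Gamma_0$ using $w_1\in D_0$, $w_2\in N_0$ and the uniform $\hatt H^2$ bound \eqref{b8} on $u$. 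This lift is the idea your sketch is missing.

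Second, in your contradiction argument ``taking limits of the boundary traces'' may fail because $\gamma_N\varphi_n$ on $\partial\Gamma_0$ need not be bounded: Sobolev on a short edge gives only $\|\varphi_n'\|_{L^\infty(e)}\lesssim\ell_e^{-1/2}\|\varphi_n'\|_{L^2(e)}+\ell_e^{1/2}\|\varphi_n''\|_{L^2(e)}$, and the first term can diverge. The paper sidesteps this with a separate geometric lemma (Proposition~\ref{nw10}): if $b_n\in B$, one of $A,B$ is finite-dimensional, and $\dist(b_n,A)\to0$, then $\dist(b_n,A\cap B)\to0$. Applied with $b_n=\tr\varphi_n\in\cL=B$ and $A=(D_0\oplus N_0)\cap\ker(\dP_+)$, Condition~\ref{newhyp} (equivalently $\cL\cap A\subset\{0\}\oplus L^2(\partial\Gamma)$) yields $\|\gamma_D\varphi_n\|\to0$, contradicting the sup-norm normalization on $\Gamma_0$ --- no convergence of $\gamma_N\varphi_n$ is ever needed. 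Incidentally, the second estimate in \eqref{eq:resolvent_convergence_AB} is simpler than you indicate: $\cJ_\ell\cJ_\ell^*=\chi_{\Gamma_+(\ell)}$ exactly, so $\|(I-\cJ_\ell\cJ_\ell^*)R(\cL,\ell,z)g\|_{L^2}\le|\ell_0|^{1/2}\|R(\cL,\ell,z)g\|_{L^\infty}\to0$ directly from the uniform $L^2\to L^\infty$ bound.
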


\begin{proof}On page \pageref{uu77} as a combination of
  Theorems~\ref{uu77} and \ref{u7}.
\end{proof}

An immediate corollary of the convergence of resolvents is
convergence of spectra.

\begin{theorem}[Spectral convergence]\label{thm:spectral_convergence}
  Assume Condition~\ref{newhyp} holds.  Then 
  \begin{equation}\label{eq:spectral_convergence}
    \spec(H(\cL,\ell))\rightarrow \spec(H(\wti\cL,\wti\ell))
    \ \text{as}\  \ell\rightarrow\wti\ell,
  \end{equation}
  in the Hausdorff sense for multisets.  Namely, if $\lambda_0$ has multiplicity $m\in\{0,1,2,\ldots\}$ in the
  multiset $\spec(H(\wti\cL,\wti \ell))$ then for all sufficiently
  small $\varepsilon>0$ there
  exists $\delta=\delta(\varepsilon, \lambda_0)>0$ such that
  \begin{equation}\label{c9}
    \card\left( \spec(H(\cL,\ell))\cap B(\lambda_0, \varepsilon)\right) = m
    \text{\ whenever } |\ell-\wti\ell|<\delta.
  \end{equation} 
  Furthermore, eigenspaces converge in the following sense,
  \begin{equation}
    \label{eq:projectors_convergence_AB}
    \begin{split}
      &\Big\|\cJ_{\ell}\chi(
      H(\wti\cL,\wti\ell))-\chi(H(
      \cL,\ell))\cJ_{\ell}\Big\|_{\cB\left(L^2(\Gamma(\wti\ell)),
      	L^2(\Gamma(\ell))\right)} \to 0,\\ 
      &\big\|\left(I_{L^2(\Gamma(\cG;\ell))}-\cJ_{\ell}\cJ_{\ell}^*\right)
      \chi(H(
      \cL,\ell))\big\|_{\cB(L^2(\Gamma(\cG;\ell)))}
      \to 0,\\
    \end{split}
  \end{equation}
  where $\chi(H(\cL,\ell))$ and
  $\chi( H(\wti\cL,\wti\ell))$ denote the spectral projections
  of the respective operators onto an interval $(a,b)$ with
  $a,b\in\bbR\setminus\spec(H(\wti\cL,\wti\ell))$.
\end{theorem}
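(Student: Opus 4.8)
The plan is to deduce Theorem~\ref{thm:spectral_convergence} from the resolvent convergence already asserted in Theorem~\ref{cc1}. The standard strategy for passing from norm resolvent convergence to convergence of spectra and spectral projections is via the Riesz (Cauchy integral) functional calculus: if the resolvents converge in an appropriate (here, generalized/quasi-unitary) sense uniformly for $z$ on a contour, then the spectral projectors, which are contour integrals of the resolvents, converge as well. I would carry this out adapting O.~Post's framework for generalized norm resolvent convergence, cited in the introduction as \cite{P06,P11,P12}.

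First I would fix $\lambda_0$ and $\varepsilon>0$ small enough that the circle $\partial B(\lambda_0,\varepsilon)$ does not meet $\spec(H(\wti\cL,\wti\ell))$ (possible since the limiting operator has discrete spectrum, its resolvent being compact on the compact graph $\Gamma(\wti\ell)$). For such $z$ on the circle, $R(\wti\cL,\wti\ell,z)$ is uniformly bounded; combined with the first line of \eqref{eq:resolvent_convergence_AB}, the quasi-unitarity relations $\cJ_\ell^*\cJ_\ell=I$ and $\|\cJ_\ell\|\le 1$, and a Neumann-series argument, one gets that for $\ell$ close enough to $\wti\ell$ the circle $\partial B(\lambda_0,\varepsilon)$ also avoids $\spec(H(\cL,\ell))$, and that $R(\cL,\ell,z)$ is uniformly bounded on the circle uniformly in $\ell$. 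This is the key quantitative step: a perturbation lemma in the quasi-unitary setting showing $\dist(z,\spec(H(\cL,\ell)))$ stays bounded below on the contour. Then integrating the two lines of \eqref{eq:resolvent_convergence_AB} over $z\in\partial B(\lambda_0,\varepsilon)$ and dividing by $-2\pi i$ yields \eqref{eq:projectors_convergence_AB} with $\chi$ the spectral projector onto $B(\lambda_0,\varepsilon)$; the same argument with a contour enclosing $(a,b)$ gives the stated version with $\chi$ the projector onto $(a,b)$.

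Next, to get the multiplicity statement \eqref{c9}, I would compare ranks. From $\cJ_\ell^*\cJ_\ell=I_{L^2(\Gamma(\wti\ell))}$ it follows that $\cJ_\ell$ is an isometry, so $\cJ_\ell\chi(H(\wti\cL,\wti\ell))$ has rank exactly $m=\dim\ran\chi(H(\wti\cL,\wti\ell))$. By \eqref{eq:projectors_convergence_AB}, $\chi(H(\cL,\ell))\cJ_\ell$ differs from it by an operator of norm $o(1)$, hence for $\ell$ near $\wti\ell$ has rank $\ge m$, giving $\card(\spec(H(\cL,\ell))\cap B(\lambda_0,\varepsilon))\ge m$. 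For the reverse inequality I would use the second line of \eqref{eq:projectors_convergence_AB}: writing $\chi(H(\cL,\ell))=\cJ_\ell\cJ_\ell^*\chi(H(\cL,\ell))+(I-\cJ_\ell\cJ_\ell^*)\chi(H(\cL,\ell))$, the second term is $o(1)$ in norm, while the first has rank $\le\rank\cJ_\ell^*\chi(H(\cL,\ell))\le\rank(\cJ_\ell^*\chi(H(\cL,\ell))\cJ_\ell)+o(1)$-estimates; combining the two projector estimates one shows $\cJ_\ell^*\chi(H(\cL,\ell))\cJ_\ell$ is within $o(1)$ of $\chi(H(\wti\cL,\wti\ell))$, which has rank $m$, so a finite-rank projector close to it cannot have rank exceeding $m$. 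Hence the rank of $\chi(H(\cL,\ell))$ is exactly $m$ for $\ell$ close to $\wti\ell$, which is \eqref{c9}; since the choice of $\varepsilon$ and the interval was arbitrary (avoiding $\spec(H(\wti\cL,\wti\ell))$), this also gives the Hausdorff convergence of the spectra as multisets in \eqref{eq:spectral_convergence}.

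The main obstacle I anticipate is the perturbation step in the quasi-unitary setting: establishing that $z\notin\spec(H(\cL,\ell))$ and $\|R(\cL,\ell,z)\|$ stays bounded on the contour, uniformly in $\ell\to\wti\ell$, starting only from the two-sided intertwining estimate \eqref{eq:resolvent_convergence_AB} rather than from a genuine norm resolvent convergence on a single space. One must carefully juggle $\cJ_\ell R(\wti\cL,\wti\ell,z)$, $R(\cL,\ell,z)\cJ_\ell$, $\cJ_\ell\cJ_\ell^*$ and the identity $\cJ_\ell^*\cJ_\ell=I$ to construct an approximate inverse of $H(\cL,\ell)-z$ and then upgrade it to a true bounded inverse via a Neumann series once the error is $o(1)$. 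This is exactly the content of Post's abstract machinery, so I would either invoke the relevant lemmas from \cite[Chapter~4]{P12} directly or reproduce the short argument; the spectral-projector and multiplicity conclusions then follow by the routine contour-integral and rank comparisons sketched above.
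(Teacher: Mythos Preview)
Your approach is correct and represents a genuinely different route from the paper's. You propose the Riesz--Cauchy contour integral to pass from resolvent convergence to spectral-projector convergence, with the key technical step being the perturbation lemma (in the quasi-unitary setting) ensuring the contour avoids $\spec(H(\cL,\ell))$ for $\ell$ near $\wti\ell$. The paper instead uses the Stone--Weierstrass theorem: polynomials in $(x\pm\bfi)^{-1}$ are dense in $C(\overline{\bbR})$, so from the resolvent convergence at $z=\pm\bfi$ alone one first obtains $\|\cJ_\ell f(\wti H) - f(H)\cJ_\ell\| \to 0$ for every $f\in C(\overline{\bbR})$, and then reaches $\chi_{(a,b)}$ by choosing a continuous $\psi$ supported away from $\{a,b\}$ and equal to $1$ on $\spec(\wti H)$. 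The paper's route has the advantage of working entirely at $z=\pm\bfi$, where the resolvent bound $\|R\|\le 1$ is automatic, thereby sidestepping the contour-avoidance perturbation step you correctly identify as the main obstacle; your route is more geometric and delivers the multiplicity statement \eqref{c9} in a unified way via the contour integral and rank comparison, whereas the paper simply invokes \cite[Proposition~4.3.1]{P12} for the spectral convergence. Both approaches rest on Post's quasi-unitary framework and either yields a complete proof.
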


\begin{proof}
  On page \pageref{pgspec}
\end{proof}

In the case of the Laplace operator with scale invariant vertex
conditions we show that Condition~\ref{newhyp} is not only sufficient
but also necessary for the spectral convergence to hold.

\begin{theorem}
  \lb{scale} Assume that the Robin part of $H(\cL,\ell)$ is absent ,
  that is, $P_R=0$ in \eqref{new5} and that $q^\ell\equiv 0$ .  Then
  \eqref{eq:spectral_convergence} holds \emph{if and only if}
  Condition~\ref{newhyp} is fulfilled.
\end{theorem}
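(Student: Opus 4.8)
The plan is to prove the two implications separately. The ``if'' direction is already contained in Theorem~\ref{thm:spectral_convergence}, so all the content lies in the ``only if'' direction, which I would establish in contrapositive form: if Condition~\ref{newhyp} fails, then \eqref{eq:spectral_convergence} fails. The underlying mechanism is that, when $q^\ell\equiv0$ and $P_R=0$, the value $\lambda_0=0$ is an eigenvalue of $H(\cL,\ell)$ for \emph{every} $\ell$, with a multiplicity that strictly exceeds the multiplicity of $0$ for the limiting operator precisely when Condition~\ref{newhyp} fails; this obstructs the Hausdorff convergence of multisets \eqref{c9} at $\lambda_0=0$.

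First I would record what the hypotheses give. By \eqref{aa14} with $q^\ell\equiv0$ and $P_R=0$ one has $\langle f,H(\cL,\ell)f\rangle=\|f'\|^2_{L^2(\Gamma(\ell))}\ge0$ for all $f\in\dom(H(\cL,\ell))$, so $H(\cL,\ell)\ge0$; likewise $H(\wti\cL,\wti\ell)\ge0$, and both operators have compact resolvent. Consequently $f\in\ker H(\cL,\ell)$ if and only if $f$ is constant on each edge of $\Gamma$ and $\tr^\ell f\in\cL$; recording the constant values as a boundary vector $\phi$ (so $\tr^\ell f=(\phi,0)$ with $\phi(a_e)=\phi(b_e)$ for all $e\in\cE$) shows that
\[
  \dim\ker H(\cL,\ell)=n:=\dim\{\phi\colon (\phi,0)\in\cL,\ \phi(a_e)=\phi(b_e)\ \forall e\in\cE\},
\]
a number independent of $\ell$. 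The same argument on $\Gamma_+$ gives $\dim\ker H(\wti\cL,\wti\ell)=m:=\dim\{\psi\colon (\psi,0)\in\wti\cL,\ \psi(a_e)=\psi(b_e)\ \forall e\in\cE_+\}$.

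The heart of the matter is to compare $n$ with $m$ via the restriction map $T\phi:=\phi|_{\partial\Gamma_+}$ from the first of these spaces to the second. That $T$ lands in the second space follows from $(\phi,0)\in\cL\cap(D_0\oplus N_0)$ (note $\phi\in D_0$ and $0\in N_0$) together with the definition \eqref{u6} of $\wti\cL$. For surjectivity one uses the hypothesis $P_R=0$ in the form $\cL=\cL_D\oplus\cL_N$ (Proposition~\ref{gh1}): given $\psi$ in the target, pick $(\phi_1,\phi_2)\in\cL\cap(D_0\oplus N_0)$ with $\phi_1|_{\partial\Gamma_+}=\psi$ and $\phi_2|_{\partial\Gamma_+}=0$; the splitting forces $(\phi_1,0)\in\cL$, and $\phi_1\in D_0$ together with $\psi(a_e)=\psi(b_e)$ on $\cE_+$ forces $\phi_1(a_e)=\phi_1(b_e)$ on all of $\cE$, so $\phi_1$ lies in the source and $T\phi_1=\psi$. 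Finally, $\ker T$ is exactly the space of boundary vectors of functions that are constant on each edge of $\Gamma_0$, vanish on $\Gamma_+$, and have trace in $\cL$; by Lemma~\ref{y21} this is trivial if and only if Condition~\ref{newhyp} holds. Rank--nullity then gives $n=m+\dim\ker T$, so Condition~\ref{newhyp} fails if and only if $n>m$. Assuming it fails, take $\lambda_0=0$ (of multiplicity $m$ in $\spec(H(\wti\cL,\wti\ell))$) and fix $\varepsilon>0$ below the least positive eigenvalue of $H(\wti\cL,\wti\ell)$ (any $\varepsilon>0$ if there is none); since $\spec(H(\cL,\ell))\subset[0,\infty)$ contains $0$ with multiplicity $n$, we get $\card\big(\spec(H(\cL,\ell))\cap B(\lambda_0,\varepsilon)\big)\ge n>m$ for every $\ell\in\bbR^{|\cE|}_{>0}$, which contradicts \eqref{c9}. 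Hence \eqref{eq:spectral_convergence} fails.

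I expect the surjectivity of $T$ --- equivalently the identity $n=m+\dim\ker T$ --- to be the one genuinely delicate step, since it is exactly where the hypothesis $P_R=0$ (through $\cL=\cL_D\oplus\cL_N$) enters in an essential way and where the definition of $\wti\cL$ must be matched against the $D_0$, $N_0$ conditions edge by edge. The positivity statement and the description of $\ker H(\cL,\ell)$ follow immediately once $q^\ell\equiv0$ and $P_R=0$ are substituted into \eqref{aa14}, and deducing the failure of \eqref{c9} from $n>m$ uses only nonnegativity of the operators and compactness of their resolvents.
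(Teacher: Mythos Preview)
Your proposal is correct and follows essentially the same approach as the paper's proof: both reduce the ``only if'' direction to the strict inequality $\dim\ker H(\cL,\ell)>\dim\ker H(\wti\cL,\wti\ell)$ at the eigenvalue $0$, using that $q^\ell\equiv0$ and $P_R=0$ force kernel elements to be edgewise constant, and invoking Lemma~\ref{y21} to produce the excess. The only organizational difference is that the paper builds an explicit injection $\varphi\mapsto f_\varphi$ from $\ker H(\wti\cL,\wti\ell)$ into $\ker H(\cL,\ell)$ and then appends one additional element from Lemma~\ref{y21}, whereas you go the dual route via the surjective restriction map $T$ and rank--nullity; your formulation has the minor advantage of yielding the exact identity $n=m+\dim\ker T$.
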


\begin{proof}
On page \pageref{pgscale}.
\end{proof}

While Condition~\ref{newhyp} is convenient to use (see numerous
examples below), it will not be used directly in the proofs.  Instead
%, see Theorem \ref{u7}, 
we will need a more technical result: a uniform bound on the resolvent
of $H(\cL, \ell)$ as an operator from $L^2(\Gamma(\ell))$ to
$L^{\infty}(\Gamma(\ell))$ which follows from Condition~\ref{newhyp}.
In fact, it is this bound that implies the conclusion of Theorem
\ref{cc1}. We explore this bound in the following two theorems.

\begin{theorem}\label{ss1} Recall \eqref{3.9}--\eqref{aa6}. Then the following statements are equivalent:
  \begin{itemize}
  \item[(i)] There exists a constant $c>0$, independent of $\ell$, such that
    \begin{equation}
      \label{eq:resolvent_bound}
      \|R(\cL, \ell, \bfi)\|_{\cB{(L^2(\Gamma(\ell)),
          L^{\infty}(\Gamma(\ell)))}} < c,
    \end{equation}
    for all $\ell$
    sufficiently close $\wti\ell$.
  \item[(ii)] There exists a constant $c>0$, independent of $\ell$, such that
    \begin{equation}\no
      \big\|\chi_eR(\cL,\ell, \bfi)\big\|_{\cB(L^2(\Gamma(\ell)))} <
      c\sqrt{\ell_e}\quad  \text{for each\  }e\in\cE_0,
    \end{equation}
    for all $\ell$
    sufficiently close $\wti\ell$.
  \item[(iii)] There exists a constant $c>0$, independent of $\ell$ and $f$, such that 
    \begin{equation}\label{ss4}
      \|f\|_{L^{\infty}(\Gamma(\ell))}^2
      \leq  c \left(\| f\|_{L^2(\Gamma(\ell))}^2+ \| f''\|_{L^2(\Gamma(\ell))}^2\right),\ f\in\dom(H(\cL,\ell)),
    \end{equation} 
    for all $\ell$
    sufficiently close $\wti\ell$
  \end{itemize}
  Moreover, if one of the above statements holds then for some
  constant $c>0$, independent of $\ell$ and $f$, one has
  \begin{equation}\label{qqq1}
    \| f'\|_{L^2(\Gamma(\ell))}^2\leq c \left(\| f\|_{L^2(\Gamma(\ell))}^2+ \| f''\|_{L^2(\Gamma(\ell))}^2\right),\  f\in\dom(H(\cL,\ell)),
  \end{equation} 
  and
  \begin{align}
    \begin{split}\label{b8}
      &\| R(\cL, \ell, \bfi)\|_{\cB\big(L^2(\Gamma(\ell)),
        \hatt{H}^2(\Gamma(\ell))\big)} < c,
    \end{split}
  \end{align}
 for all $\ell$
 sufficiently close $\wti\ell$.
\end{theorem}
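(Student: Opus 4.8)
The proof should establish the cycle (i) $\Rightarrow$ (ii) $\Rightarrow$ (iii) $\Rightarrow$ (i), together with the two concluding estimates \eqref{qqq1} and \eqref{b8}. I will take each implication in turn, keeping track of the fact that all constants must be uniform in $\ell$ near $\wti\ell$.

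\emph{Step 1: (i) $\Rightarrow$ (ii).} Fix a vanishing edge $e\in\cE_0$. For $g\in L^2(\Gamma(\ell))$ put $f=R(\cL,\ell,\bfi)g\in\dom(H(\cL,\ell))$. By (i), $\|f\|_{L^\infty(\Gamma(\ell))}\le c\|g\|_{L^2(\Gamma(\ell))}$. Since $e$ has length $\ell_e$, restricting to $e$ gives
\[
\|\chi_e f\|_{L^2(\Gamma(\ell))}\le \sqrt{\ell_e}\,\|f\|_{L^\infty(e)}\le c\sqrt{\ell_e}\,\|g\|_{L^2(\Gamma(\ell))},
\]
which is exactly (ii). This step is trivial.

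\emph{Step 2: (iii) $\Rightarrow$ (i).} This is immediate from the resolvent identity: for $f=R(\cL,\ell,\bfi)g$ we have $f''=q^\ell f - \bfi f - g$ on each edge, so $\|f''\|_{L^2}\lesssim \|f\|_{L^2}+\|g\|_{L^2}$ using $\|q^\ell\|_\infty=\cO(1)$ from \eqref{pot}; also $\|f\|_{L^2}=\|R(\cL,\ell,\bfi)g\|_{L^2}\le\|g\|_{L^2}$ since $H(\cL,\ell)$ is self-adjoint. Feeding these into \eqref{ss4} yields $\|f\|_{L^\infty}^2\lesssim\|g\|_{L^2}^2$, i.e. (i). The estimate \eqref{b8} will then follow at once: $\|f\|_{\hatt H^2}^2\simeq\|f\|_{L^2}^2+\|f''\|_{L^2}^2\lesssim\|g\|_{L^2}^2$ (one needs $\|f'\|_{L^2}^2\lesssim\|f\|_{L^2}^2+\|f''\|_{L^2}^2$, which is \eqref{qqq1}, proved below by a one-dimensional interpolation argument on each edge, carefully handling the vanishing edges so that the constant does not blow up).

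\emph{Step 3: the interpolation inequality \eqref{qqq1}, and (ii) $\Rightarrow$ (iii).} On a single interval $[0,\ell_e]$ one has the standard estimate $\|u'\|_{L^2[0,\ell_e]}^2\le C(\ell_e^{-1}+\ell_e)\big(\|u\|_{L^2}^2+\|u''\|_{L^2}^2\big)$ — the $\ell_e^{-1}$ factor is exactly the obstruction as $\ell_e\to0$. For non-vanishing edges ($e\in\cE_+$) the lengths stay bounded away from $0$ so this is harmless. For vanishing edges I will instead integrate by parts, $\|u'\|_{L^2(e)}^2=-\int_e \bar u u'' + [\bar u u']_{\partial e}$, and \emph{spend the boundary term}: the point is that on a vanishing edge, (ii) (equivalently, smallness of $\chi_e f$ combined with the second Green identity \eqref{vv5} and the Lagrangian property $\langle \tr f,(J\otimes I)\tr f\rangle$) controls $f$ and $f'$ at the endpoints. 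Concretely, summing \eqref{aa14} over all edges and using $\langle f,H(\cL,\ell)f\rangle=\langle f,(q^\ell+\bfi)f\rangle - \langle f,g\rangle$ shows $\|f'\|_{L^2(\Gamma(\ell))}^2\lesssim\|f\|_{L^2}^2+\|f\|_{L^2}\|g\|_{L^2}+|\langle P_R\gamma_D^\ell f,QP_R\gamma_D^\ell f\rangle|$. Bounding the last (Robin) term requires controlling the boundary values $\gamma_D^\ell f$ at vertices touched by vanishing edges; here one uses (ii): $\|\chi_e f\|_{L^2(e)}\le c\sqrt{\ell_e}\|g\|_{L^2}$ together with $\|\chi_e f''\|_{L^2(e)}\lesssim\|g\|_{L^2}+\|f\|_{L^2}$ and a scaling-invariant trace estimate $|f(v)|^2\lesssim \ell_e^{-1}\|\chi_e f\|_{L^2(e)}^2+\ell_e\|\chi_e f''\|_{L^2(e)}^2$ to get $|f(v)|\lesssim\|g\|_{L^2}+\sqrt{\ell_e}\,\|f\|_{L^2}$, which stays bounded. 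This gives \eqref{qqq1}, hence $\|f'\|_{L^2}\lesssim\|g\|_{L^2}$; combining with a similar scaling-invariant trace/Agmon-type bound $\|f\|_{L^\infty(e)}^2\lesssim\|f\|_{L^2(e)}\|f'\|_{L^2(e)}+\ell_e^{-1}\|f\|_{L^2(e)}^2$ on each edge — on non-vanishing edges the $\ell_e^{-1}$ is bounded, on vanishing edges $\ell_e^{-1}\|\chi_e f\|_{L^2(e)}^2\le c^2\|g\|_{L^2}^2$ by (ii) — yields (iii).

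\textbf{The main obstacle.} The delicate point is Step 3, specifically extracting a \emph{uniform} $L^\infty$ bound on the vanishing edges. The naive Sobolev embedding $H^2(e)\hookrightarrow L^\infty(e)$ has operator norm that degenerates like $\ell_e^{-1/2}$ as $\ell_e\to0$, which is precisely why the theorem is nontrivial. The trick is that on a vanishing edge the bad term is always of the form $\ell_e^{-1}\|\chi_e f\|_{L^2(e)}^2$, and hypothesis (ii) says exactly that this quantity is $\cO(\|g\|_{L^2}^2)$ — so (ii) is doing all the work, and the scheme is essentially to show that (ii) is the one place where a resonance on a vanishing edge could break the estimate. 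Bookkeeping the various boundary/trace terms at vertices incident to several vanishing edges (so that the constant from one edge is not amplified by another) is the part requiring care; here one exploits that there are finitely many edges and vertices, so a finite number of applications of (ii) suffices.
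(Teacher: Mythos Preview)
Your proof is correct and follows the same overall cycle $(i)\Rightarrow(ii)\Rightarrow(iii)\Rightarrow(i)$ as the paper, with identical arguments for $(i)\Rightarrow(ii)$ and $(iii)\Rightarrow(i)$. The organization of $(ii)\Rightarrow(iii)$ differs, however. The paper takes the direct route: substitute \eqref{g5.1} into \eqref{g5} to get, on \emph{every} edge,
\[
\|f_e\|_{L^\infty(e)} \leq 2\ell_e^{-1/2}\|f_e\|_{L^2(e)} + \ell_e^{3/2}\|f_e''\|_{L^2(e)},
\]
and then invoke (ii) once to kill the dangerous $\ell_e^{-1/2}\|f_e\|_{L^2(e)}$ term on vanishing edges. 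This gives (iii) immediately, with no boundary terms appearing at all. Only \emph{afterwards} does the paper establish \eqref{qqq1}, using the already-proved (iii) to control $\|\gamma_D f\|^2$ and hence the Robin term in \eqref{aa14}. Your route inverts this order: you first control the Robin boundary term via a trace estimate on each edge (using (ii) on vanishing edges), thereby obtaining \eqref{qqq1}, and then combine \eqref{qqq1} with the Agmon-type bound $\|f\|_{L^\infty(e)}^2\lesssim\|f\|_{L^2(e)}\|f'\|_{L^2(e)}+\ell_e^{-1}\|f\|_{L^2(e)}^2$ to reach (iii). Both routes use (ii) at exactly the same pinch point---the $\ell_e^{-1}\|f_e\|_{L^2(e)}^2$ term on vanishing edges---so they are morally equivalent; the paper's is shorter because it never has to touch the quadratic form \eqref{aa14} or the Robin matrix $Q$ until after (iii) is secured.
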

\begin{proof}
On page \pageref{pgss1}.
\end{proof}
\begin{theorem}
  \label{y1}  
  Condition~\ref{newhyp} implies statements (i)-(iii) of Theorem~\ref{ss1}. Furthermore, if the Robin part of $H(\cL,\ell)$ is absent,
  that is, $P_R=0$ in \eqref{new5}, then (i)-(iii) of Theorem~\ref{ss1} are equivalent to Condition~\ref{newhyp} 
\end{theorem}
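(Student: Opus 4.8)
The plan is to prove Theorem~\ref{y1} in two halves. First I would show that Condition~\ref{newhyp} implies the uniform resolvent bound, for which the cleanest route is to establish statement (iii) of Theorem~\ref{ss1} directly, i.e.\ a uniform Sobolev-type estimate $\|f\|_{L^\infty(\Gamma(\ell))}^2 \le c(\|f\|_{L^2}^2 + \|f''\|_{L^2}^2)$ on $\dom(H(\cL,\ell))$. I would argue by contradiction using a compactness/normal-families scheme: if no such $c$ exists, there are $\ell^{(n)}\to\wti\ell$ and $f_n\in\dom(H(\cL,\ell^{(n)}))$ with $\|f_n\|_{L^\infty}=1$ but $\|f_n\|_{L^2}^2+\|f_n''\|_{L^2}^2\to0$. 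On the non-vanishing edges $e\in\cE_+$, rescale to the unit interval; the standard Sobolev embedding $H^2[0,1]\hookrightarrow L^\infty[0,1]$ gives $\|f_n\|_{L^\infty(\Gamma_+)}\to0$, and moreover the traces $\tr^{\ell^{(n)}}f_n|_{\partial\Gamma_+}\to0$ (values and derivatives at endpoints of $\cE_+$-edges both tend to zero). On each vanishing edge $e\in\cE_0$, parametrize by $y\in[0,1]$ via $x=\ell_e y$; writing $g_{n,e}(y)=f_n(\ell_e y)$, one has $g_{n,e}''(y)=\ell_e^2 f_n''(\ell_e y)$, so $\|g_{n,e}''\|_{L^2[0,1]}^2=\ell_e^3\|f_n''\|_{L^2(e)}^2\to0$, while $\|g_{n,e}\|_{L^2[0,1]}^2=\ell_e^{-1}\|f_n\|_{L^2(e)}^2$; this last quantity need not vanish, but since $\|f_n\|_{L^\infty(e)}\le1$ it stays bounded, and $g_{n,e}$ is precompact in, say, $C^1[0,1]$ after passing to a subsequence. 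The limit $g_e$ satisfies $g_e''=0$, hence $g_e$ is affine on $[0,1]$.

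The key point is then to extract the boundary-value vector. Because the $f_n$ lie in $\dom(H(\cL,\ell^{(n)}))$, their traces lie in $\cL$; the continuity/derivative matching along any shared vertex of $\cE_0$ and $\cE_+$-edges forces, in the limit, that the affine limits $g_e$ glue consistently and that $(\phi_1,\phi_2):=\lim \tr(f_n)$ lies in $\cL$. Since $g_e$ is affine, and $g_e'$ is forced to be \emph{constant} but $\phi_2$ records $+g_e'$ at one end and $-g_e'$ at the other (inward derivatives), the limiting derivative trace automatically lies in $N_0$; likewise $\phi_1\in D_0$ would require $g_e$ to be \emph{constant}, which is not yet guaranteed — here one must be careful. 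Actually the correct statement is: $g_e$ affine gives $\phi_2\in N_0$ and the value trace is $(g_e(0),g_e(1))$, which need \emph{not} lie in $D_0$. So rather than appealing to $D_0\oplus N_0$ directly, I would instead use the first Green-type identity/quadratic form bound: from $\|f_n''\|_{L^2}\to0$ and $\|f_n\|_{L^2}\to0$ one gets, after rescaling the vanishing edges, that $\|g_{n,e}'\|_{L^2[0,1]}^2=\ell_e\|f_n'\|_{L^2(e)}^2$, and an integration by parts on $[0,1]$ combined with the fact that the endpoint contributions of $f_n'$ on $\cE_0$-edges are linked (via $\cL$) to those on $\cE_+$-edges (which vanish) shows the limit $g_e$ must in fact be \emph{constant}: its slope is determined by boundary data that tends to zero. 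This yields $\phi_1\in D_0$ as well. Hence $(\phi_1,\phi_2)\in\cL\cap(D_0\oplus N_0)$ with $\phi_1|_{\partial\Gamma_+}=\phi_2|_{\partial\Gamma_+}=0$; Condition~\ref{newhyp} forces $\phi_1=0$, so every $g_e$ is the zero constant, i.e.\ $\|f_n\|_{L^\infty(\Gamma_0)}\to0$ as well, contradicting $\|f_n\|_{L^\infty}=1$. This establishes (iii), and Theorem~\ref{ss1} then supplies (i), (ii), \eqref{qqq1}, and \eqref{b8}.

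For the converse under $P_R=0$, I would show that the \emph{failure} of Condition~\ref{newhyp} produces a violation of (iii) (equivalently (i)). If Condition~\ref{newhyp} fails, pick $(\phi_1,\phi_2)\in\cL\cap(D_0\oplus N_0)$ with $\phi_1|_{\partial\Gamma_+}=\phi_2|_{\partial\Gamma_+}=0$ but $\phi_1\ne0$. When $P_R=0$, the Lagrangian plane splits as $\cL=\cL_D\oplus\cL_N$ (cf.\ the scale-invariance discussion and Proposition~\ref{gh1}), so $\phi_1\in\cL_D$ with $\phi_1\in D_0$ and $\phi_1|_{\partial\Gamma_+}=0$; thus $\phi_1$ is supported on the endpoints of $\cE_0$ and constant on each vanishing edge. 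By Lemma~\ref{y21} this is exactly the obstruction it describes. Now construct explicit trial functions: take $f_\ell$ to be the function on $\Gamma(\ell)$ that equals a fixed nonzero constant (matching $\phi_1$) on each vanishing edge $e$ and is zero on $\Gamma_+$, suitably interpolated near the vertices so that $f_\ell\in\dom(H(\cL,\ell))$ — since the value trace is $\phi_1\in\cL_D$ and the derivative trace can be arranged in $\cL_N$ using the freedom in $\cL_N$, this costs only a boundary-layer correction of length $O(\ell_e)$. Then $\|f_\ell\|_{L^\infty}\asymp1$ while $\|f_\ell\|_{L^2(\Gamma(\ell))}^2 = O(\sum_{e\in\cE_0}\ell_e)\to0$ and $\|f_\ell''\|_{L^2}^2\to0$ (the corrections contribute $O(\ell_e^{-1}\cdot\text{(const)}^2)$ only if done naively, so the correction must be spread over the whole vanishing edge, i.e.\ take $f_\ell$ \emph{linear} on each $e\in\cE_0$ from the prescribed end-values, giving $f_\ell''=0$ there and $\|f_\ell'\|_{L^2(e)}^2=O(\ell_e^{-1})$ — this is fine since (iii) only controls $f''$, not $f'$). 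This contradicts (iii). The main obstacle, and the part requiring the most care, is the limiting-trace/gluing argument in the first half: one must rescale the vanishing edges correctly, identify the right function space in which the rescaled functions are precompact, and — crucially — verify that the limiting boundary data genuinely lands in $\cL\cap(D_0\oplus N_0)$ rather than merely in some larger space, which is where the self-adjointness (closedness of $\cL$ as a Lagrangian plane) and the precise link between the $\cE_0$- and $\cE_+$-endpoint data enter.
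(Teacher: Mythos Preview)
Your overall strategy for the forward direction---argue by contradiction, normalize $\|f_n\|_{L^\infty}=1$, show the traces approach $\cL\cap(D_0\oplus N_0)\cap\ker({}^d\!P_+)$, then invoke Condition~\ref{newhyp}---is the same as the paper's. The converse under $P_R=0$ is also essentially the paper's argument (take the piecewise-constant function supported on $\Gamma_0$; no interpolation or boundary layer is needed, since $\phi_1\in D_0$ already makes the end-values match, so $f_\ell$ is genuinely constant on each $e\in\cE_0$, $f_\ell''=0$, and $\gamma_N f_\ell=0\in\cL_N$).

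However, there is a genuine gap in the forward direction. You write $(\phi_1,\phi_2):=\lim\tr(f_n)$ and then argue that this limit lies in $\cL\cap(D_0\oplus N_0)$. The problem is that the derivative traces $f_n'|_{\partial\Gamma_0}$ need not be bounded, so $\tr f_n$ need not converge at all. Your rescaled functions satisfy $g_{n,e}'(y)=\ell_e f_n'(\ell_e y)$; showing $g_{n,e}'\to0$ (which indeed follows once you know $\|f_n'\|_{L^2(\Gamma)}=O(1)$) only tells you $\ell_e f_n'(a_e)\to0$, not that $f_n'(a_e)$ itself converges. So you cannot extract a limit $\phi_2$ and claim $(\phi_1,\phi_2)\in\cL$ by closedness. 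This is precisely the obstacle that the paper isolates and resolves with Proposition~\ref{nw10}: one never takes a limit of $\tr f_n$, but instead uses that $\tr f_n\in\cL$ \emph{exactly} and $\dist\big(\tr f_n,\,(D_0\oplus N_0)\cap\ker({}^d\!P_+)\big)\to0$ (the latter from the elementary estimates $|f_n(a_e)-f_n(b_e)|\le\sqrt{\ell_e}\|f_n'\|_{L^2}$, $|f_n'(a_e)-f_n'(b_e)|\le\sqrt{\ell_e}\|f_n''\|_{L^2}$, and the Sobolev bounds on $\Gamma_+$). Proposition~\ref{nw10} then yields $\dist\big(\tr f_n,\,\cL\cap(D_0\oplus N_0)\cap\ker({}^d\!P_+)\big)\to0$, and Condition~\ref{newhyp} forces $\gamma_D f_n\to0$, contradicting $\|f_n\|_{L^\infty}=1$.

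A secondary point: your argument that ``the slope is determined by boundary data that tends to zero'' is not how one gets constancy. The correct input is the global quadratic-form identity \eqref{aa14}, which gives $\|f_n'\|_{L^2(\Gamma)}^2=\langle f_n,f_n''\rangle-\langle P_R\gamma_D f_n,QP_R\gamma_D f_n\rangle=O(1)$ (using $\|f_n\|_{L^\infty}\le1$ to bound the Robin term). This is a global statement reflecting self-adjointness, not an edge-by-edge integration by parts that links $\cE_0$-endpoints to $\cE_+$-endpoints. With this bound in hand, the fundamental theorem of calculus on each short edge gives $|f_n(a_e)-f_n(b_e)|\to0$ directly, without any rescaling.
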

\begin{proof}
On page \pageref{pgy1}.
\end{proof}

To illustrate our results we will now discuss several examples of
graphs with shrinking edges.  We start with the most basic example
where there is no spectral convergence.

\begin{example}[Shrinking Neumann interval]
  \label{5.5}
  In this example we consider a disconnected two edge graph
  $\Gamma=\{e_N, e_D\}$ and the Laplace operator subject to Neumann
  boundary conditions on $e_N$ and to Dirichlet boundary conditions on
  $e_D$. The spectrum of such quantum graph is given by
  \begin{equation}\label{sp}
    \{0\}\cup\left\{  \left(\frac{\pi k_1}{\ell_D}\right)^2,
      \left(\frac{\pi k_2}{\ell_N}\right)^2: k_1\in\bbN, k_2\in
      \bbN\right\}. 
  \end{equation}

  Now let $\ell_N \to 0$ while $\ell_D=1$.  Condition~\ref{newhyp} (in the form of
  Lemma~\ref{y21}) fails: the function equal to 1 on $e_N$ and
  $0$ on $e_D$ satisfies the boundary conditions for all $\ell_N$.  This
  function gives rise to eigenvalue $0$ which is \emph{not} present in
  the spectrum of $H(\wti\cL,\wti\ell)$ defined according to
  (\ref{u6}). The latter operator is simply the Dirichlet Laplacian on
  the interval $e_D$ whose spectrum is
  \begin{equation}\label{sp_lim}
    \left\{\left(\frac{\pi k_1}{\ell_D}\right)^2: k_1\in\bbN \right\}. 
  \end{equation}

  A slight variation of this example is the same graph with
  $\ell_D \to 0$, $\ell_N=1$, which does satisfy
  Condition~\ref{newhyp}. The limiting operator is the Neumann
  Laplacian on the interval $e_N$ whose spectrum is the limit of the
  set in (\ref{sp}) as $\ell_D \to 0$.
\end{example}

Despite its simplicity, the above example illustrates the common
mechanism of convergence failure: presence of an eigenfunction whose
support is shrinking to zero.  The following example shows that there
are connected graphs with similar features.

\begin{figure}[h]
  \includegraphics[scale=1.2]{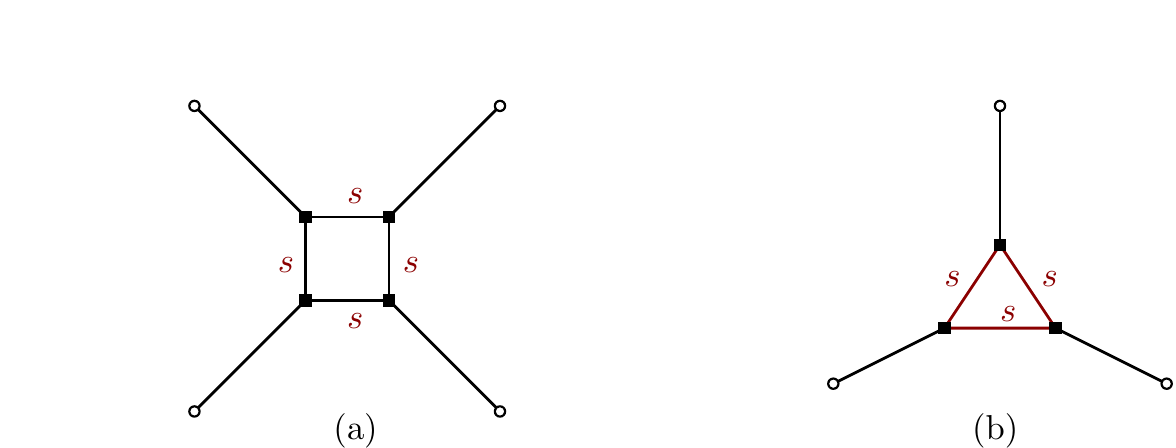}
  \caption{Two similar graphs with different convergence outcomes as
    $s\to0$: the spectrum of (a) does not converge while that of (b) does.  Empty circles
    denote Dirichlet conditions and full squares denote anti-Kirchhoff
    conditions, cf. \eqref{gh11}.}
  \label{fig:div_resolvent}
\end{figure}

\begin{example}
  \label{ex:divergence_antikirchhoff}
  Consider the graph in Fig.~\ref{fig:div_resolvent}(a), equipped with
  anti-Kirchhoff conditions, cf. \eqref{gh11}. Condition \ref{newhyp} fails by Lemma~\ref{y21} since there is a function
  equal to $+1$ on vertical vanishing edges, $-1$ on horizontal
  vanishing edges and zero on all non-vanishing edges.  This is an
  eigenfunction with eigenvalue zero whose support is the vanishing
  part.

  We point out that the spectral convergence (or lack thereof) depends
  not only on the boundary conditions but also on the topology of the
  graph.  It is easy to see that the graph in
  Fig.~\ref{fig:div_resolvent}(b), despite having the same vertex
  conditions as Fig.~\ref{fig:div_resolvent}(a), satisfies the
  conditions of Lemma~\ref{y21}: the only function, constant on each
  edge and equal to zero on the non-vanishing edges must be zero on
  the whole graph.
\end{example}

% \begin{example}\label{6.3}
%   In this example we consider the graph displayed in Figure
%   \ref{fig:div_resolvent} (a). Concretely, in this example
%   $H(\cL,\ell_s)$ is the Laplace operator on graph Figure
%   \ref{fig:div_resolvent} (a) subject to anti-Kirchhoff conditions at
%   all vertices. The Robin part of $H(\cL,\ell_s)$ is equal to zero,
%   $P_R=0$. Therefore Corollary \ref{y21} is applicable and it is
%   enough to check that the operator $H(\cL,\ell_s)$ does not possess
%   zero-energy eigenfunctions supported on $\Gamma_{0}$. This is indeed
%   the case since no edgewise constant function can satisfy
%   anti-Kirchhoff boundary conditions on a given graph. Thus Condition
%   \ref{aa10} holds by Corollary \ref{y21}.
% \end{example}

Are the eigenfunctions of eigenvalue 0 the only ones to cause such
problems?  In general, the answer is no.  Let us start with a related
question: suppose the whole graph is scaled by $s$ as $s \to 0$.  Weyl's law
dictates that the bulk of the eigenvalues grow at the rate $1/s^2$.
If the vertex conditions are scale invariant, all eigenvalues of the
Laplacian get multiplied by $1/s^2$ and grow (except the eigenvalue
$0$).  But is the same true in general?

\begin{figure}
  \includegraphics[scale=1]{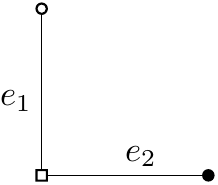}
  \caption{\label{y41} $\circ$ denotes Dirichlet vertex
    conditions, $\bullet$ denotes Neumann--Kirchhoff conditions, cf. \eqref{gh9},
    $\Box$ denotes vertex conditions
    given by (\ref{eq:hyperbolic_cond})}
\end{figure}

\begin{example}
  \label{ex:hyperbolic}
  Consider the graph consisting of two edges of length
  $\ell_1=\ell_2=s$ connected at one endpoint, see Figure \ref{y41}. Impose
  Dirichlet and Neumann conditions at  endpoints of degree one of edges
  $e_1$ and $e_2$, correspondingly.  At the vertex of degree 2 impose
  the conditions that we will call \emph{hyperbolic},
  \begin{equation}
    \label{eq:hyperbolic_cond}
    \begin{cases}
      \partial_{\nu} f_{e_1}(v)=-f_{e_2}(v),\\ 
      \partial_{\nu} f_{e_2}(v)=-f_{e_1}(v).
    \end{cases}
  \end{equation}
  This graph has vanishing volume but $-1$ remains an eigenvalue
  independently of $s$.  The corresponding eigenfunction is
  \begin{equation}
    \label{eq:hyperbolic_efun}
    f_{e_1}(x) = \sinh(x), \qquad f_{e_2}(x) = \cosh(x),    
  \end{equation}
  where on both edges the point $x=0$ is at the vertex of degree one.
\end{example}

We now turn this into an example of a \emph{connected} graph with some
non-vanishing edges.

\begin{figure}[h]
  \includegraphics[scale=1.0]{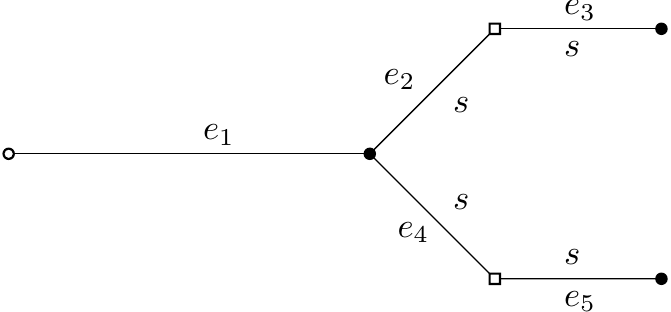}
  \caption{$e_1$ is of length 1, $e_k$ is of length $s$ for
    $k\in\{2,3,4,5\}$, $\circ$ denotes Dirichlet conditions, $\bullet$
    denotes Neumann--Kirchhoff conditions, $\Box$ denotes vertex given
    by~\eqref{eq:hyperbolic_cond}.}
  \label{hbc}
\end{figure}

\begin{example}
  \label{ex:hyperbolic_fork}
  Consider the graph shown in Fig.~\ref{hbc}.  The lengths of the edges
  are $\ell_1=1$ and $\ell_2=\ell_3=\ell_4=\ell_5=s \to 0$.  There is
  an eigenfunction with eigenvalue $-1$ for every $s>0$:
  \begin{equation}
    \label{eq:hyperbolic_fork_efun}
    f_{e_1}(x) \equiv 0, \,
    f_{e_2}(x) = \sinh(x),  \,
    f_{e_3}(x) = \cosh(x),  \,
    f_{e_4}(x) = -\sinh(x),  \,
    f_{e_5}(x) = -\cosh(x).
  \end{equation}
  Using \eqref{eq:equal_on_vanishing}, it is easy to see that
  $H(\wti\cL, \wti\ell)$ is the edge $e_1$ with Dirichlet conditions.
  Thus every approximating graph has an eigenvalue $-1$ while its
  would-be limit is a strictly positive operator.
\end{example}

Let us now explore some examples where the spectral convergence holds.

\begin{example}[Tadpole graph with a vanishing loop]
  \label{ex:tadpole}

  \begin{figure}
    \hspace{-2cm}
    \includegraphics[scale=1]{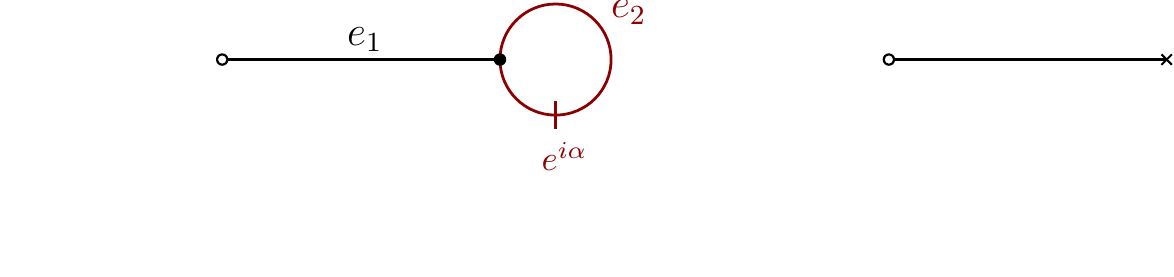}
    \includegraphics[width=5in]{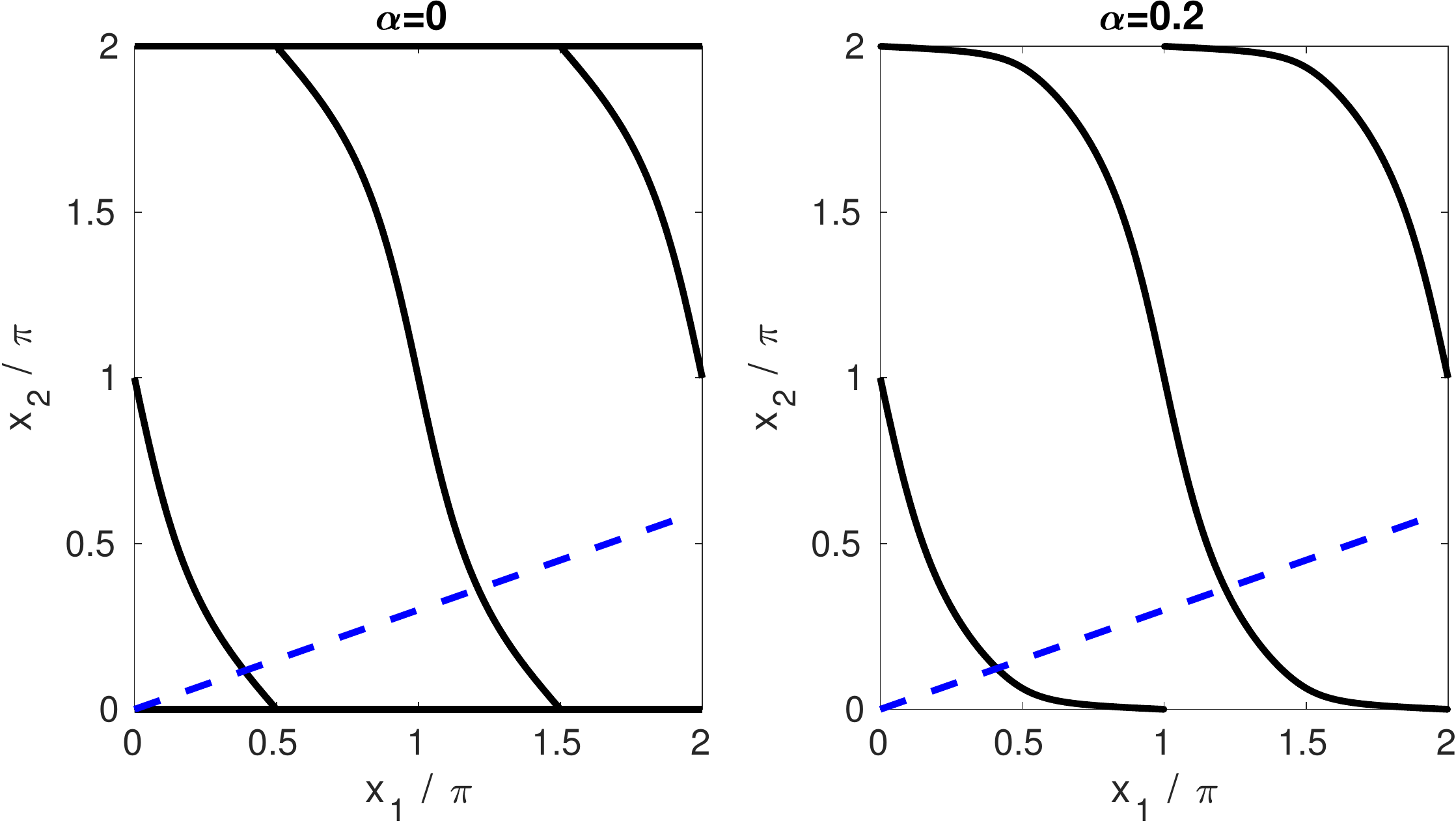}
    \caption{\label{tadpole}Top panel: $\circ$ denotes Dirichlet
      vertex conditions, $\bullet$ denotes Kirchhoff conditions , $\times$
      denotes Dirichlet conditions if $\alpha\not=0$ mod ($2\pi$) and
      Kirchhoff conditions if $\alpha=0$ mod ($2\pi$). \newline Bottom
      panel: The secular manifold and the Barra--Gaspard flow for two
      values of $\alpha$.}
  \end{figure}
  Consider the graph consisting of an edge and a loop attached to one
  of its endpoints, see Figure \ref{tadpole}.  We impose
  Neumann--Kirchhoff conditions at the attachment point and the
  Dirichlet condition at the other endpoint.  We assume the magnetic
  flux $\alpha$ is threading the loop.  The magnetic field is realized
  as the condition
  \begin{equation}
    \label{mag_condition}
    f(c+) = e^{i\alpha} f(c-) \qquad \partial_\nu f(c+) =
    -e^{i\alpha} \partial_\nu f(c-)
  \end{equation}
  at an arbitrary point $c$ on the loop.  The derivative is taken in
  the direction away from $c$ according to our convention; this leads
  to the minus sign in \eqref{mag_condition}. Let $\ell_1=1$ be the
  length of the edge and $\ell_2=s$ be the length of the loop.  The
  spectral convergence, as $s\rightarrow 0$, holds by Lemma~\ref{1a}
  and Theorem~\ref{thm:spectral_convergence}.  However, the limiting
  operator depends on whether $\alpha=0$ or not.

  It is interesting to explore this difference from the point of view
  of the secular manifold.  Following a well-known procedure \cite{B17}, the eigenvalues
  $\lambda=k^2>0$ of this graph can be found as the solutions of the
  secular equation $F(kl_1, kl_2; \alpha)=0$, where, in this case, the
  secular function $F$ is given by
  \begin{equation}
    \label{sec_fun_tadpoleN}
    F(x_1, x_2; \alpha) 
    = -2 \sin x_1 \left(\cos x_2 - \cos(\alpha) \right) 
    - \cos x_1 \sin x_2.
  \end{equation}%\label{fig:graph_examples}
  
  To understand the behavior of eigenvalues, we follow Barra--Gaspard,
  cf. \cite{BG}, and
  visualize them as the intersections of the straight line
  $[k l_1, k l_2]$, $k\in(0,\infty)$ with the analytic  variety
  \begin{equation}
    \label{sec_surf_def}
    \Sigma_\alpha = \{ (x_1,x_2) \in \mathbb{R}^2 : F(x_1,x_2;
    \alpha)=0\},
  \end{equation}
  usually referred to as \emph{secular manifold}.  This convenient
  characterization is available only for graphs with scale invariant
  vertex conditions and zero potential.  Both the line and the secular
  manifold $\Sigma_\alpha$ for two values of $\alpha$ (zero and
  non-zero) are illustrated in Figure~\ref{tadpole}.  Since we are
  setting $l_1=1$, the values of $k$ can be read as the $x$-coordinate
  of the intersection points.

  The structure of secular manifold undergoes a significant change
  from $\alpha=0$ to $\alpha \neq 0$.  When $\alpha=0$, the secular
  manifold on the torus is a union of a smooth curve and the line
  $x_2 = 0$.  When $\alpha \neq 0$, there are two smooth curves (which
  related by a shift of $\pi$ in $x_1$ direction).

  Suppose that the slope of the dashed lines in Figure \ref{tadpole}
  is equal to $s$. Then as $s\rightarrow 0$, the first intersection
  point converges to $(\pi/2,0)$ when $\alpha=0$ mod ($2\pi$) and to
  $(\pi,0)$ otherwise. That is, the first intersection point tends to
  the first eigenvalue of the Neumann--Dirichlet interval if
  $\alpha=0$ mod ($2\pi$) and to the first eigenvalue of
  Dirichlet--Dirichlet interval otherwise.
  % All subsequent intersections converge to the corresponding
  % eigenvalues of the Dirichlet--Dirichlet interval, if
  % $\alpha\not=0$ mod ($2\pi$), and of the Neumann--Dirichlet
  % interval, if $\alpha=0$ mod ($2\pi$).

  If, instead of contracting the loop, we contract the edge, our
  results dictate that the loop will get the Dirichlet conditions at
  the (former) attachment point.  This disconnects the loop into an
  interval of length $l_2$ with Dirichlet endpoints and the spectrum
  $k_n = \pi n / l_2$.  The result is independent of $\alpha$ (the
  magnetic field on an interval can be removed by a gauge
  transformation) and can be seen both from Figure~\ref{tadpole} (the
  dashed line is getting close to vertical) or from setting $x_1=0$ in
  the secular function, equation \eqref{sec_fun_tadpoleN}.

  Finally, we remark that simply setting the relevant $x=0$ does not
  always produce the correct secular function for the limiting
  problem: as observed in \cite{ABB}, we get identically zero if we set
  $x_2=0$ for the loop with no magnetic field ($\alpha=0$).
\end{example}

\begin{example}[A vanishing cycle in a graph with Neumann--Kirchhoff
  conditions]
  Consider the tetrahedron graph (complete graph on 4 vertices, $K_4$)
  with one vertex turned into a triangle.  We will be contracting the
  triangle into a single vertex, see Figure~\ref{tetra}, scaling it by
  $s \to 0$.  We notice that the assumption of Lemma~\ref{1a} is
  satisfied, hence, the spectral convergence holds.

  \begin{figure}
    \hspace{-2cm}
    \includegraphics[scale=1]{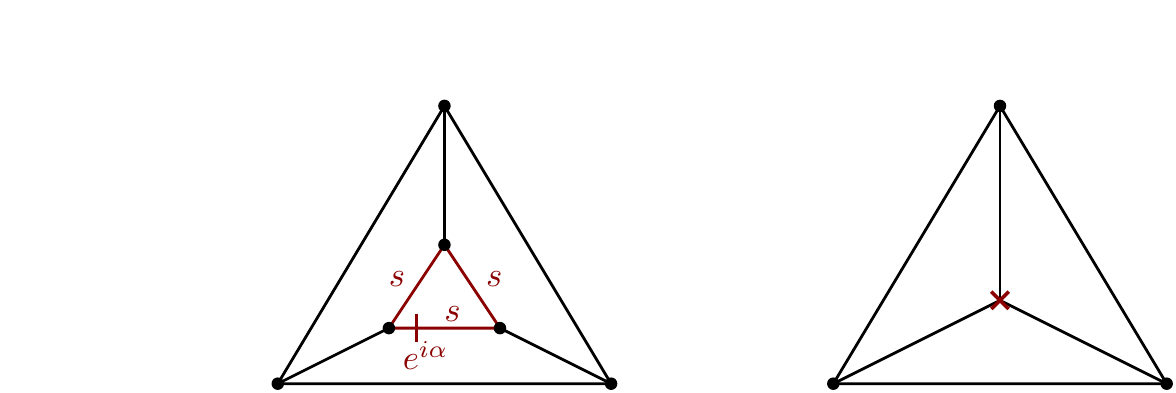}
    
    \vspace{0.5cm}
    \includegraphics[scale=0.7]{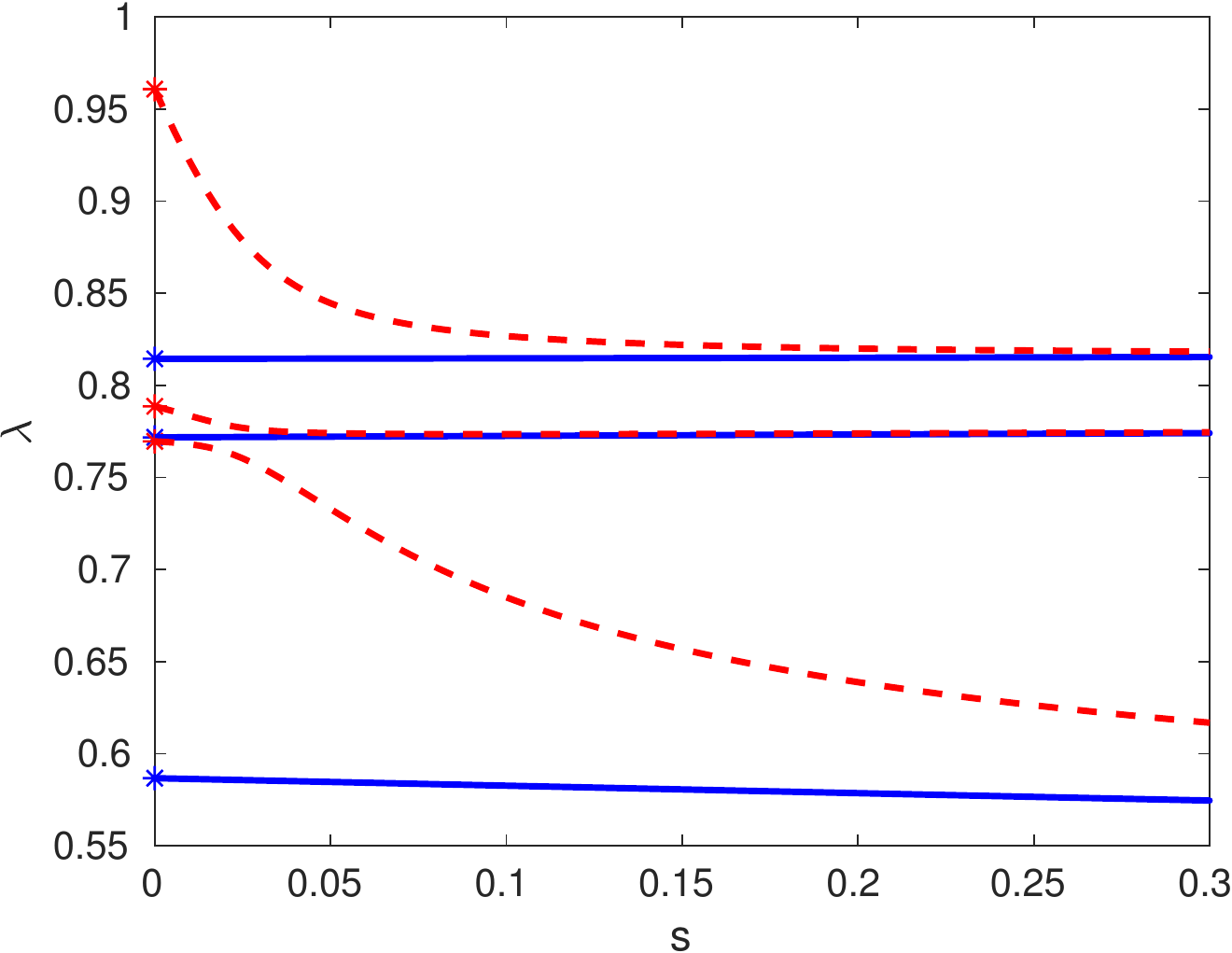}
    \caption{\label{tetra} Bottom panel:
      Numerical calculation of the spectrum of a graph with a
      cycle of length 3 contracting into a single vertex.  Blue curves
      correspond to no magnetic field, red lines correspond to a small
      flux threading the cycle.  The limiting eigenvalue displayed as
      stars at $s=0$ were calculated from the limiting vertex being
      supplied with Neumann--Kirchhoff (solid blue line) and Dirichlet
      (dashed red line) conditions.}
  \end{figure}

  We will thread magnetic flux $\alpha$ through the small triangle,
  realized as imposing conditions \eqref{mag_condition} on one of its
  edges.  The limit predicted by our results depends on the value of
  the flux.  For zero flux we simply recover Neumann--Kirchhoff
  conditions at the limiting vertex.  When flux is non-zero (modulo
  $2\pi$), the limiting conditions are Dirichlet which effectively
  disconnects the three edges at the central vertex.  These results are
  confirmed by the agreement between the results for small $s$ and the
  limiting graph computations, shown in Figure~\ref{tetra}.
\end{example}

\begin{example}
  \label{ex:hyperbolic_long}
  In a slight modification of Example~\ref{ex:hyperbolic}, we consider
  the graph displayed in Figure \ref{y41}, but with edge $e_2$ now
  having constant length $1$ while $e_1$ is shrinking. In this setting Condition \ref{newhyp} is satisfied and
 the spectral convergence holds.
\end{example}

%%%%%%%%%%%%%%%%%%%%%%%%%%%%%%%%%%%%%%%%%%%%%%%%%%%%%%%%%%%%%%%%%%%%%%%%%%%%%%%%%%%%%%%
\section{Lagrangian and Symplectic Subspaces}
\label{sec5}

The purpose of this section is to provide proof of the results that
make heavy use of symplectic geometry, namely Proposition~\ref{b1},
Theorem~\ref{a30.1} and Lemmas~\ref{y21} and \ref{1a}.  We start by
collecting the basic facts and definitions (see, for example, \cite{McS} for
further information).

\begin{definition}\label{a1}
  Let $n\in\bbN$ and $S$ be a complex linear space of dimension $2n$.
  A form $\omega:S\times S \rightarrow \C$ is called symplectic if the
  following holds:
  \begin{itemize}
  \item[(i)]$\omega$ is {\it sesquilinear}, that is,
    $\omega(\alpha x+\beta
    y,z)=\overline{\alpha}\omega(x,z)+\overline{\beta}\omega(y,z)$ and
    $\omega(z, \alpha x+\beta y)=\alpha\omega(z,x)+\beta\omega(z,y)$,
    for all $x,y,z\in S$ and $\alpha, \beta\in\C$
  \item[(ii)] $\omega$ is {\it skew-Hermitian}, that is,
    $\omega(x,y)=-\overline{\omega(y,x)}$, for all $x,y\in S$,
  \item[{(iii)}] $\omega$ is {\it nondegenerate}, that is,
    if $\omega(x, y)=0$ for all $y\in S$, then $x=0$.
  \end{itemize}
  The pair $(S,\omega)$ is called a \emph{symplectic space}.
\end{definition}

Let $\omega$ be a symplectic form on $\C^{2n}$ and $V\subset \C^{2n}$
be a linear subspace. The {\it annihilator} of $V$ is denoted by
$V^{\circ}$ and defined by the formula
\begin{equation}\label{a2}
V^{\circ}:=\{x\in\C^{2n}: \omega(x,y)=0 \text{\ for all\ }y\in V \}.
\end{equation}
Since the form $\omega$ is nondegenerate one has \cite[Lemma~2.2]{McS}
\begin{equation}
  \dim(V)+\dim (V^{\circ}) = 2n,\qquad
  \label{eq:dbl_annihilator}
  \left(V^\circ\right)^\circ = V.
\end{equation}
% Furthermore, there exists a linear skew-self-adjoint operator
% $J_{\omega}\in\cB(\C^{2n})$ such that $J_{\omega}^2=-I_{2n}$,
% $J_{\omega}^*=-J_{\omega}$, and
% \begin{equation}\label{a7.1}
% \omega(x,y)=(x,J_{\omega}y)_{2n},\ x, y\in \C^{2n}.
% \end{equation} 

\begin{definition}\label{a3}
  Let $\omega$ be a symplectic form on $\C^{2n}\times\C^{2n}$ and let
  $S,V,W,\cL \subset\C^{2n}$ be linear subspaces. Then $S$ is called
  \emph{symplectic} if $S^{\circ}\cap S=\{0\}$, $V$ is called
  \emph{isotropic} if $V\subset V^{\circ}$, $W$ is called
  \emph{co-isotropic} if $W^\circ \subset W$, and $\cL$ is called
  \emph{Lagrangian} if $\cL^{\circ}=\cL$.
\end{definition}
A subspace $S$ of a symplectic space $(\C^{2n},\omega)$ is
symplectic if and only if the restriction $\omega|_S$ of $\omega$ on
$S$ is a symplectic form on the linear space $S$ (in other words, $S$
is a symplectic subspace if and only if $(S, \omega|_S)$ is a
symplectic space).

The main use of the Lagrangian theory in this paper is to characterize
self-adjoint vertex conditions on a graph.

\begin{proof}[Proof of Proposition \ref{b1}]
  The second Green's identity yields
  \begin{align}
    &\langle H_{max}f,g \rangle_{L^2(\Gamma)}
      -\langle f,H_{max}g \rangle_{L^2(\Gamma)}
      =\omega(\tr f, \tr g).\label{b9.1}
  \end{align}
  where $f,g\in \hatt{H}^2(\Gamma)$.
  
  Let us assume that $H$ is a self-adjoint extension of $H_{min}$. The subspace 
  \begin{equation}\no
    \tr\big({\dom(H)}\big) \subset L^2(\partial \Gamma)\oplus L^2(\partial \Gamma),
  \end{equation}
  is isotropic since $\omega(\tr f, \tr g)=0$ whenever
  $f,g\in\dom(H)$. In order to show that it is maximal, we recall that $\ran(\tr) = L^2(\partial \Gamma)\oplus L^2(\partial \Gamma)$.
  Assume that $w\in\big(\tr\left({\dom(H)}\right)\big)^{\circ}$, then
  there exists $f\in \hatt{H}^2(\Gamma)$ such that $w=\tr f$. Then for
  any $g\in \dom(H)$, one has
  \begin{equation}\no
    \langle Hf,g \rangle_{L^2(\Gamma)}
    -\langle f,Hg\rangle_{L^2(\Gamma)}
    =\omega(\tr f, \tr g)=0,
  \end{equation}
  hence, $f\in\dom(H^*)=\dom(H)$. Therefore, the subspace
  $\tr\big({\dom(H)}\big)$ is Lagrangian. We complete the proof of
  injectivity in the first part of the statement of the proposition by
  noticing that if $H_k=H_k^*, k=1,2$ are two self-adjoint extensions
  of $H_{min}$ satisfying
  \begin{equation}\label{aa1}
    \tr\big({\dom(H_1)}\big)=\tr\big({\dom(H_2)}\big),
  \end{equation}
  then 
  \begin{equation}\label{aa2}
    H_k\subset H^*|_{\dom(H_1)+\dom(H_2)}=(H^*|_{\dom(H_1)+\dom(H_2)})^*, k=1,2.
  \end{equation}
  Since $H_k, k=1,2$ are self-adjoint operators, \eqref{aa2} yields
  \begin{equation}
    \dom(H_1)=\dom(H_2), \text{\ hence,\ }H_1=H_2.
  \end{equation}
  
  To prove the second assertion in the proposition, let us fix a
  Lagrangian plane
  $\cL\subset L^2(\partial \Gamma)\oplus L^2(\partial
  \Gamma)$.
  Clearly, the operator given by \eqref{b7} is symmetric. Furthermore,
  for arbitrary $h\in\dom(H({\cL})^*)$ and $g\in\dom(H({\cL}))$ one
  has
  \begin{equation*}
    0=\langle H(\cL)^*h,g \rangle_{L^2(\Gamma)}
    -\langle h,H({\cL})g \rangle_{L^2(\Gamma)}=\omega(\tr h, \tr g).
  \end{equation*}
  Therefore, $\tr h \in \cL^{\circ}=\cL$ and $h\in
  \dom(H(\cL))$. Hence, $H(\cL)$ is a self-adjoint operator.
\end{proof}

To establish Theorem~\ref{a30.1} we use a technique sometimes called
\emph{linear symplectic reduction}.

\begin{proposition}[see, for example, {\cite[Lemma I.2.7]{McS}} or
  {\cite[Proposition I.8.4]{LM}}]
  \label{prop:symplectic_reduction}
  Let $W$ be a co-isotropic subspace of the symplectic space
  $(\C^{2n}, \omega)$.  The \emph{reduced symplectic space} associated
  with $W$ is the space
  \begin{equation}
    \label{eq:reduced_space_def}
    \dot{W} = W / W^\circ
  \end{equation}
  with the symplectic form naturally induced by $\omega$.

  If $\cL$ is a Lagrangian subspace of $(\C^{2n}, \omega)$, then the
  projection of $\cL \cap W$ onto $\dot{W}$ is Lagrangian in
  $\dot{W}$.
\end{proposition}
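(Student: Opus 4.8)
The plan is to prove Proposition~\ref{prop:symplectic_reduction} directly from the definitions, in two stages: first checking that the induced form on $\dot W = W/W^\circ$ is genuinely symplectic, and then verifying that the projection of $\cL\cap W$ is Lagrangian with respect to it. For the first stage, since $W$ is co-isotropic ($W^\circ\subset W$), the quotient $\dot W$ makes sense and $\omega$ descends to a well-defined sesquilinear form $\dot\omega$ on it: if $x'=x+a$, $y'=y+b$ with $a,b\in W^\circ$ and $x,y\in W$, then $\omega(x',y')=\omega(x,y)+\omega(a,y)+\omega(x,b)+\omega(a,b)=\omega(x,y)$ because each added term vanishes (for instance $\omega(a,y)=0$ since $a\in W^\circ$ and $y\in W$). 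Skew-Hermiticity of $\dot\omega$ is inherited from $\omega$. Nondegeneracy is the substantive point: if $[x]\in\dot W$ satisfies $\dot\omega([x],[y])=0$ for all $[y]\in\dot W$, then $\omega(x,y)=0$ for all $y\in W$, i.e. $x\in W^\circ$, so $[x]=0$. Hence $(\dot W,\dot\omega)$ is symplectic, and in particular $\dim\dot W=\dim W-\dim W^\circ$ is even; combined with \eqref{eq:dbl_annihilator} this gives $\dim\dot W=2\dim W-2n$.

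For the second stage, write $\pi:W\to\dot W$ for the quotient projection and set $M:=\pi(\cL\cap W)$. The isotropy of $M$ is immediate: for $x,y\in\cL\cap W$ we have $\dot\omega(\pi x,\pi y)=\omega(x,y)=0$ since $\cL$ is isotropic. So $M\subset M^{\circ_{\dot W}}$, and it remains to prove the reverse inclusion (equivalently, a dimension count: $\dim M = \tfrac12\dim\dot W = \dim W - n$). I will do this by a dimension computation rather than chasing the annihilator. The kernel of $\pi|_{\cL\cap W}$ is $\cL\cap W^\circ$, so
\begin{equation}\label{eq:dimM}
  \dim M = \dim(\cL\cap W) - \dim(\cL\cap W^\circ).
\end{equation}
Now I use the general fact that for a Lagrangian $\cL$ and any subspace $U$ one has $(\cL\cap U)^\circ = \cL + U^\circ$ (which follows from $\cL^\circ=\cL$ together with $(A\cap B)^\circ=A^\circ+B^\circ$, itself a consequence of $(A+B)^\circ=A^\circ\cap B^\circ$ and double-annihilator \eqref{eq:dbl_annihilator}). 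Applying this with $U=W$ gives, via \eqref{eq:dbl_annihilator} and $\dim(A+B)=\dim A+\dim B-\dim(A\cap B)$,
\begin{equation}\label{eq:cLcapW}
  \dim(\cL\cap W) = 2n - \dim(\cL+W^\circ) = 2n - \big(n + \dim W^\circ - \dim(\cL\cap W^\circ)\big),
\end{equation}
using $\dim\cL = n$ and $\dim W = 2n-\dim W^\circ$, hence $\dim(\cL\cap W) = n - \dim W^\circ + \dim(\cL\cap W^\circ)$. Substituting into \eqref{eq:dimM} yields $\dim M = n - \dim W^\circ = \dim W - n = \tfrac12\dim\dot W$, so the isotropic subspace $M$ has half the dimension of $\dot W$ and is therefore Lagrangian.

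The main obstacle I anticipate is the linear-algebra identity $(\cL\cap W)^\circ = \cL + W^\circ$ for Lagrangian $\cL$; it is the crux that converts the problem from an annihilator computation in $\dot W$ (where keeping track of which annihilators are taken in $\dot W$ versus $\C^{2n}$ is error-prone) into a clean dimension count in $\C^{2n}$. I would state and prove the two feeding identities $(A+B)^\circ = A^\circ\cap B^\circ$ and $(A\cap B)^\circ=A^\circ+B^\circ$ as a small lemma (the first is direct from the definition of annihilator, the second follows by applying the first to $A^\circ,B^\circ$ and using $(V^\circ)^\circ=V$), and then everything else is bookkeeping with \eqref{eq:dbl_annihilator}. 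Alternatively, since the proposition is quoted from \cite{McS, LM}, one could simply cite it, but giving the short self-contained argument above keeps the symplectic-reduction section readable and makes the geometric meaning of Condition~\ref{newhyp} (explored later in this section) transparent.
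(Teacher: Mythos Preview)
Your argument is correct. The paper itself does not prove this proposition; it simply cites \cite[Lemma I.2.7]{McS} and \cite[Proposition I.8.4]{LM} and immediately uses the result in the proof of Theorem~\ref{a30.1}. Your self-contained proof via the dimension count is the standard one and works cleanly: the key identity $(\cL\cap W)^\circ=\cL+W^\circ$ (from $(A\cap B)^\circ=A^\circ+B^\circ$ and $\cL^\circ=\cL$) together with \eqref{eq:dbl_annihilator} gives $\dim(\cL\cap W)=n-\dim W^\circ+\dim(\cL\cap W^\circ)$, and since $\ker(\pi|_{\cL\cap W})=\cL\cap W^\circ$ (using $W^\circ\subset W$), the projected subspace has dimension $n-\dim W^\circ=\tfrac12\dim\dot W$, hence is Lagrangian. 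As you note, citing the references is also entirely adequate here; including the short argument is a matter of taste and does make the symplectic-reduction machinery more transparent for readers unfamiliar with it.
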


\begin{proof}[Proof of Theorem \ref{a30.1}]
  \lb{pga30.1} In order to prove that $\wti\cL$ is a Lagrangian plane
  we let $W = D_0 \oplus N_0$ and investigate $\dot{W}$.
  We recall that
  \begin{equation}
    \label{eq:Wdef}
    W = \{(\phi_1,\phi_2) \in  \dL^2(\partial\Gamma):
    \phi_1(a_e)=\phi_1(b_e),\ 
    \phi_2(a_e)=-\phi_2(b_e),\ 
    e\in\cE_0 \}.
  \end{equation}
  Importantly, $\phi_1$ and $\phi_2$ take arbitrary values on the
  edges $e \in \cE_+$.  Explicit calculation shows that
  \begin{align}
    W^\circ &= \{(\phi_1,\phi_2) \in  \dL^2(\partial\Gamma):
    \phi_1|_{\partial \Gamma+} = \phi_2|_{\partial \Gamma+} = 0;\ 
    \phi_1(a_e)=\phi_1(b_e),\ 
    \phi_2(a_e)=-\phi_2(b_e),\ 
    e\in\cE_0\} \nonumber \\
    \label{eq:Wcirc}
    &= \{(\phi_1,\phi_2) \in  \dL^2(\partial\Gamma):
    \phi_1|_{\partial \Gamma+} = \phi_2|_{\partial \Gamma+} = 0; \}
    \cap W.
  \end{align}
  This shows that $W$ is co-isotropic and $W / W^\circ$ is naturally
  identified with $\dL^2(\partial \Gamma_+)$.  By the second part of
  Proposition~\ref{prop:symplectic_reduction}, $\wti\cL$ which is
  defined in~\eqref{u6} as the projection of $\cL \cap W$ to
  $\dL^2(\partial \Gamma_+)$ is Lagrangian in
  $\dL^2(\partial \Gamma_+)$.
  
  % Related to "explicit calculation"
  % \begin{align}
  %   \omega(u, v) 
  %   &= \int_{\partial\Gamma_0} \left(
  %     \overline{u_2}v_1-\overline{u_1}v_2 \right) \\ 
  %   \nonumber
  %   &= \sum_{e\in \cE_0}
  %     \Big(\overline{u_2}(a_e)v_1(a_e)
  %     +\overline{u_2}(b_e)v_1(b_e)\Big) 
  %     - \Big(\overline{u_1}(a_e)v_2(a_e) 
  %     + \overline{u_1}(b_e)v_2(b_e)\Big)  \\
  %   \nonumber
  %   &= \sum_{e\in \cE_0}
  %     \Big(\overline{u_2}(a_e)+\overline{u_2}(b_e)\Big) v_1(a_e)
  %     - \Big(\overline{u_1}(a_e) - \overline{u_1}(b_e)\Big) v_2(a_e).
  % \end{align}
  % Since the choice of $v_1(a_e)$ and $v_2(a_e)$ is arbitrary,
  % $\omega(u,v) = 0$ if and only if $u_2(a_e)+u_2(b_e) =
  % u_1(a_e)-u_1(b_e) = 0$, that is $u\in D_0 \oplus N_0$.
\end{proof}

\begin{remark}
  \label{rem:generic}
  Using equation~\eqref{eq:Wcirc} we can succinctly write
  Condition~\ref{newhyp} as
  \begin{equation}
    \label{eq:newhyp_short}
    \cL \cap (D_0 \oplus N_0)^\circ \,\subset\, 0 \oplus L^2(\partial \Gamma_0).
  \end{equation}
  Note the similarity to the condition of transversality of $\cL$ and
  $D_0 \oplus N_0$, namely $\cL \cap (D_0 \oplus N_0)^\circ = 0$ (we
  used that $\cL=\cL^\circ$).  Transversality is generic in the
  Grassmanian of all Lagrangian planes $\cL$.  Therefore, our less
  restrictive Condition~\ref{newhyp} is also generic.
\end{remark}

%%%%%%%%%%%%
\subsection{Geometry of Condition~\ref{newhyp}}

In this section we delve deeper into the meaning of
Condition~\ref{newhyp} and prove Lemmas~\ref{y21} and \ref{1a}.  To
approach Lemma~\ref{y21} we characterize scale invariant conditions in
terms of the Lagrangian plane $\cL$.

\begin{proposition}\label{gh1} 
  The vertex conditions, \eqref{new5}, for the operator $H(\cL)$ are
  scale invariant, that is, $P_R=0$, if and only if there exist
  subspaces $\cL_D\subset L^2(\partial \Gamma)$ and
  $\cL_N\subset L^2(\partial \Gamma)$ such that
  \begin{equation}\lb{gh2}
    \cL=\left\{(\phi_1,\phi_2)\in\,\dL^2(\partial \Gamma): 
      \phi_1\in\cL_D, \phi_2\in\cL_N \right\}.
  \end{equation}
\end{proposition}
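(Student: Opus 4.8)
The plan is to prove the equivalence by unpacking both sides in terms of the projections $P_D, P_N, P_R$ and the matrix $Q$ from \eqref{new5}, and to match the decomposition \eqref{gh2} against the structure of $\dom(H(\cL))$. First I would recall that the trace map $\tr$ is onto $\dL^2(\partial\Gamma)$, so that $\cL = \tr(\dom(H(\cL)))$ is exactly the set of pairs $(\phi_1,\phi_2) = (\gamma_D f, \gamma_N f)$ subject to the three relations $P_D\phi_1 = 0$, $P_N\phi_2 = 0$, $P_R\phi_2 = Q P_R\phi_1$. Since $Q$ is invertible and self-adjoint on $\ran(P_R)$, and since the three ranges decompose $L^2(\partial\Gamma)$ orthogonally, I can describe $\cL$ very explicitly: $\phi_1$ ranges over $\ran(P_N)\oplus\ran(P_R)$ freely, $\phi_2$ ranges over $\ran(P_D)$ freely, and the $\ran(P_R)$-components of $\phi_1$ and $\phi_2$ are locked together by $\phi_2 = Q\phi_1$ on $\ran(P_R)$.

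For the ``if'' direction, assume $P_R = 0$. Then the Robin coupling disappears and the description above becomes: $\phi_1 \in \ran(P_N)$ arbitrary (with $P_D\phi_1 = 0$, i.e. $\phi_1 \in \ker P_D$), $\phi_2 \in \ran(P_D)$ arbitrary (with $P_N\phi_2 = 0$), and no coupling between them. Hence $\cL = \cL_D \oplus \cL_N$ in the sense of \eqref{gh2} with $\cL_D := \ran(P_N) = \ker(P_D)$ and $\cL_N := \ran(P_D) = \ker(P_N)$ (using $\ran P_D\oplus \ran P_N = L^2(\partial\Gamma)$ when $P_R=0$); this establishes \eqref{gh2}. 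For the ``only if'' direction, suppose $\cL$ has the product form \eqref{gh2}; I must show $P_R = 0$. I would argue directly from Lagrangianness: the condition $\cL^\circ = \cL$, with $\omega((\phi_1,\phi_2),(\psi_1,\psi_2)) = \langle\phi_2,\psi_1\rangle - \langle\phi_1,\psi_2\rangle$, splits into $\cL_D^\perp = \cL_N$ and $\cL_N^\perp = \cL_D$ (orthogonal complements in $L^2(\partial\Gamma)$), because the form pairs the first slot of one vector against the second slot of the other. Indeed $(\phi_1,\phi_2)\in\cL$ annihilates all of $\cL$ iff $\langle\phi_2,\psi_1\rangle = 0$ for all $\psi_1\in\cL_D$ and $\langle\phi_1,\psi_2\rangle = 0$ for all $\psi_2\in\cL_N$, i.e. iff $\phi_2\perp\cL_D$ and $\phi_1\perp\cL_N$; so $\cL^\circ = \cL_N^\perp \oplus \cL_D^\perp$ (as a product in the same sense), and $\cL^\circ = \cL$ forces $\cL_N^\perp = \cL_D$ and $\cL_D^\perp = \cL_N$. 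Then I set $P_D :=$ orthogonal projection onto $\cL_D^\perp = \cL_N$, $P_N :=$ orthogonal projection onto $\cL_N^\perp = \cL_D$, and $P_R := 0$; one checks $P_D + P_N = I$ (since $\cL_D$ and $\cL_N = \cL_D^\perp$ are orthogonal complements), so \eqref{new4} holds with $\ran P_R = 0$, and $\dom(H(\cL)) = \{f : \gamma_D f\in\cL_D,\ \gamma_N f\in\cL_N\} = \{f : P_D\gamma_D f = 0,\ P_N\gamma_N f = 0\}$, matching \eqref{new5} with $P_R = 0$. By the uniqueness of the data $(P_D,P_N,P_R,Q)$ attached to a given Lagrangian plane (the projections are determined by $\cL$), this forces $P_R = 0$ for the original description as well.

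The main obstacle I anticipate is the bookkeeping around the uniqueness and well-definedness of the triple of projections $(P_D, P_N, P_R)$ associated to $\cL$: I am implicitly using that \eqref{new5} is a genuine parameterization, so that exhibiting \emph{one} valid choice with $P_R = 0$ is enough to conclude $P_R = 0$ always. If the excerpt's cited source (\cite[Theorem 1.1.4]{BK}) only guarantees existence and not uniqueness of this decomposition, I would instead phrase Proposition~\ref{gh1} intrinsically: ``$P_R = 0$'' should be read as ``$\cL$ admits a representation \eqref{new5} with $P_R=0$,'' and then the argument above is a clean two-way implication with no uniqueness needed. A secondary, purely notational, subtlety is the meaning of $\cL_D \oplus \cL_N$: it is the \emph{external} direct sum realized inside $\dL^2(\partial\Gamma) = L^2(\partial\Gamma)\oplus L^2(\partial\Gamma)$, i.e. $\phi_1$ lives in the first copy and $\phi_2$ in the second, which is exactly what \eqref{gh2} spells out; I would make this explicit at the start of the proof to avoid confusion with an orthogonal decomposition of a single copy of $L^2(\partial\Gamma)$.
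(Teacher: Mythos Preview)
Your proposal is correct and follows essentially the same route as the paper: in both proofs the forward direction takes $\cL_D=\ker P_D$, $\cL_N=\ker P_N$, and the converse hinges on deducing $\cL_D=\cL_N^\perp$ from the Lagrangian condition, then defining $P_D,P_N$ accordingly with $P_R=0$. The only cosmetic difference is that you compute $\cL^\circ=\cL_N^\perp\oplus\cL_D^\perp$ in one shot, whereas the paper proves the two inclusions $\cL_N^\perp\subset\cL_D$ and $\cL_D\subset\cL_N^\perp$ separately; your worry about uniqueness of $(P_D,P_N,P_R,Q)$ is legitimate but the paper's proof has exactly the same implicit reliance, so you are not missing anything the paper supplies.
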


\begin{proof}
  If $P_R=0$ then \eqref{gh2} holds with $\cL_D:=\ker (P_D)$,
  $\cL_N:=\ker (P_N)$.  Conversely, assuming \eqref{gh2} we will first establish that
  \begin{equation}\lb{gh3}
    \cL_D = \cL_N^\perp.
  \end{equation}
  Let us pick arbitrary $f\in\cL_N^{\perp}$ and notice that for all $\phi_1\in\cL_D$, $\phi_2\in\cL_N$ one has
  \begin{equation}
    \omega((f, 0), (\phi_1, \phi_2))=-\int_{\partial\Gamma}\overline{f}\phi_2=0.
  \end{equation}
  Since $\cL$ is Lagrangian, this yields $(f, 0)\in\cL$ and, in
  particular, $f\in\cL_D$.  Next, to prove
  $ \cL_D\subset \cL_N^{\perp}$ we observe that
  $(\phi_1,0), (0,\phi_2)\in\cL$ for all $\phi_1\in\cL_D$,
  $\phi_2\in\cL_N$, thus
  \begin{equation}\lb{gh5}
    0=\omega((\phi_1, 0), (0, \phi_2))=-\int_{\partial\Gamma}\overline{\phi_1}\phi_2.
  \end{equation}
  Let $P_D$, $P_N$ denote the orthogonal projections in
  $L^2(\partial \Gamma)$ with $\ker(P_D)=\cL_D$ and
  $\ker(P_N)=\cL_N$. Then
  \begin{equation}
    \label{eq:rangesP}
    \ran(P_D) \oplus \ran(P_N) = \cL_N \oplus \cL_D = L^2(\partial \Gamma)
  \end{equation}
  and by \eqref{b7}, \eqref{new5} and \eqref{gh3}
  \begin{equation}
    \dom(H(\cL))=\left\{f\in\hatt H^2(\Gamma)\Big|
      P_D \gamma_Df=0,\,P_N \gamma_Nf=0\right\}.
  \end{equation}
  Thus, $P_R=0$.
\end{proof}

\begin{proof}[Proof of Lemma~\ref{y21}] \lb{pgy21} Let us note that
  Condition \ref{newhyp} can be succinctly written as follows
  \begin{equation}\label{pq4}
    (\phi_1,\phi_2) \in \cL\cap (D_0\oplus N_0)\cap \ker(\,\dP_+)
    \quad\Rightarrow\quad \phi_1=0.
    % \subset \{0\}\oplus L^2(\partial\Gamma).
  \end{equation} 
  
  Suppose that the assumption of the Lemma
  holds yet Condition \ref{newhyp} is not satisfied. Then pick
  arbitrary
  $(\phi_1, \phi_2) \in \cL\cap (D_0\oplus N_0)\cap \ker(\,\dP_+)$
  with $\phi_1 \neq 0$ and define the following function
  \begin{equation}
    f:=\sum_{e\in\cE_0}\phi_1(a_e)\chi_e\not\equiv 0.
  \end{equation}	
  By construction, we have $\gamma_D f = \phi_1$.  Also, since the
  function is constant on every edge, $\gamma_N f = 0$.  Since our
  vertex conditions are scale invariant, by Proposition~\ref{gh1},
  $(\phi_1, \phi_2) \in \cL$ implies $(\phi_1, 0) \in \cL$ and
  therefore $f \in \dom(H(\cL,\ell))$, in contradiction to the
  assumption.

  Conversely, suppose that $f$ is a nonzero function constant on each
  edge satisfying the boundary conditions and such that
  $\supp(f)\subset \Gamma_{0}$. Then
  $\tr f\in \cL\cap (D_0\oplus N_0)\cap \ker(\,\dP_+)$ yet
  $\gamma_D f \neq 0$ and therefore the choice
  $(\phi_1,\phi_2) = \tr f$ falsifies Condition~\ref{newhyp}.
\end{proof}

\begin{proof}[Proof of Lemma~\ref{1a}]
  \lb{pg1a} Due to the continuity assumption every function
  $f\in\dom(H(\cL, \ell))$ satisfying
  \begin{equation*}
    (\tr f)\upharpoonright_{\partial\Gamma_+}=0, 
  \end{equation*}	
  and 
  \begin{equation*}
    f(a_e)=f(b_e),\, \text{for all}\ e\in \cE_0          
  \end{equation*}
  has zero Dirichlet trace: $\gamma_D f=0$. 
\end{proof}

% Finally, to warm up for the heavy use of estimates in
% Section~\ref{aux}, we provide an estimate on the pseudo-inverse of the
% projector $P$ from Theorem~\ref{Lagrangian_restriction}. \GB{Is this
%   Proposition different from saying ``there exists a pseudo-inverse of
%   $P$ and, being a finite-dimensional operator, it is bounded''?}

% \begin{proposition}\label{pr54}
% 	Assume conditions of Theorem \ref{Lagrangian_restriction}. Then for every $f\in P(\cL\cap V^{\circ})$ there exists $g\in S^{\circ}$ such that 
% 	\begin{equation}
% 	f+g\in\cL\cap V^{\circ}\text{\ and\ }|g|\leq C |f|,\label{5.26}
% 	\end{equation}
% 	where $C>0$ does not depend on $f$ and $g$.
% \end{proposition}
% \begin{proof} 
% 	Let $W\subset \cL\cap V^{\circ}$ be a direct complement of $\cL\cap V^{\circ}\cap S^{\circ}$ in $\cL\cap V^{\circ}$, that is,
% 	\begin{equation}
% 	\cL\cap V^{\circ}=W\oplus\ker(P|_{\cL\cap V^{\circ}}).
% 	\end{equation}
% 	Then the operator $P: W\rightarrow P(\cL\cap V^{\circ})$ is an isomorphism, hence, there exists a positive constant $c>0$ such that
% 	\begin{equation}\label{rt1}
% 	c|x|\leq |Px| \text{\ for all\ }x\in W.
% 	\end{equation}
	
% 	Any $f\in P(\cL\cap V^{\circ})$ can be uniquely written as $f=Px, x\in W$. Letting $g:=(I-P)x\in S^{\circ}$ and using \eqref{rt1} we obtain
% 	\begin{equation}
% 	|g|=|(I-P)x|\leq \|I-P\| |x|\leq c^{-1} |Px|=\|I-P\| |x|\leq c^{-1} |f|,
% 	\end{equation}
% 	and $f+g=x\in W\subset \cL\cap V^{\circ}$.
% \end{proof}

%%%%%%%%%%%%%%%%%%%%%%%%%%%%%%%%%%%%%%%%%%%%%%%%%%%%%%%%%%%%%%%%%%%%%%%%%%%%%%%%%
\section{Resolvent estimates and the spectral convergence }
\label{aux}

As mentioned in Section~\ref{sec:main_results}, in order to prove
spectral convergence, Theorem~\ref{thm:spectral_convergence}, we will
require some technical estimates listed in Theorem~\ref{ss1}.  Before
we formally prove Theorems~\ref{ss1} and \ref{y1}, we compare these
estimates with standard functional-analytic results.

Part \emph{(i)} of Theorem~\ref{ss1} gives a bound on the resolvent of a
quantum graph operator.  A well-known bound on the resolvent of a
general self-adjoint operator $H$ on a Hilbert space $\cH$ gives
\begin{equation}
  \label{eq:gen_resolvent_bound}
  \|(H-zI)^{-1}\|_{\cB(\cH)} \leq \frac{1}{|\Im z|},\qquad \Im z \neq 0.
\end{equation}
We immediately get, for any $\Gamma(\ell)$,
\begin{equation}
  \label{eq:gen_resolvent_bound_i}
  \|R(\cL,\ell, \bfi)\|_{\cB(L^2(\Gamma(\ell)))}\leq 1.
\end{equation}
We stress that this bound is weaker than part \emph{(i)} of
Theorem~\ref{ss1}, which bounds $R(\cL,\ell, \bfi)$ as an operator
from $L^2$ to $L^\infty$.  

On the other hand, part {\it(iii)} of Theorem~\ref{ss1} is reminiscent
of the following standard Sobolev-type inequalities that hold for all
edges $e$,
\begin{align}
  &\|f_e\|_{L^{\infty}(e)} 
    \leq {\ell_e^{-1/2}}{\|f_e\|_{L^2(e)}}
    +{\ell_e^{1/2}\|f'_e\|_{L^2(e)}},
    \label{g5}\\
  &\|f_e'\|_{L^{2}(e)} 
    \leq {\ell_e^{-1}}{\|f_e\|_{L^2(e)}}
    +{\ell_e \|f_e''\|_{L^2(e)}},
    \label{g5.1}\\
  &\|f'_e\|_{L^{\infty}(e)}
    \leq {\ell_e^{-1/2}}{\|f'_e\|_{L^2(e)}}
    +{\ell_e^{1/2}\|f''_e\|_{L^2(e)}},
    \label{g5.2}
\end{align}
(cf., e.g, \cite[Theorem 4.2.4 and Corollary  4.2.7 part 1.]{Bu}). 
However, in a situation when some edge lengths $\ell_e \to 0$, uniform
bound \eqref{ss4} is a substantially stronger statement.

\begin{proof}[Proof of Theorem \ref{ss1}]\lb{pgss1} By the resolvent
  identity it suffices to verify equivalency of the statements for the
  free resolvent, i.e.\ we may assume that $q^{\ell}\equiv0$ for all
  $\ell$. Indeed, denoting the free resolvent by
  $R_0(\cL,\ell, \bfi):=(H_0(\cL, \ell)-\bfi)^{-1}$ one has
  \begin{equation}
    \label{eq:resolvent_identity}
    R(\cL,\ell, \bfi)
    =R_0(\cL,\ell, \bfi)-R_0(\cL,\ell, \bfi)q^{\ell}R(\cL,\ell, \bfi).
  \end{equation}
  Next, we recall that by assumptions 
  \begin{equation}
    \|q^{\ell}\|_{ L^{\infty}(\Gamma(\ell);\bbR)}\leq c,
  \end{equation}
  for some $c>0$ and all $\ell$ sufficiently close to
  $\wti\ell$. Combining this bound with
  \eqref{eq:gen_resolvent_bound_i} and \eqref{eq:resolvent_identity}
  one infers that parts $(i)$ and $(ii)$ hold if and only if they hold
  with $q^{\ell}\equiv0$. In addition, since
  $\dom(H(\cL,\ell))=\dom(H_0(\cL,\ell))$, part $(iii)$ holds if and
  only it holds with $q^{\ell}\equiv0$.
	
	{\it(i)}$\implies${\it(ii)}. For arbitrary $e\in\cE_0$ and  $v\in L^2(\Gamma)$,
	\begin{align}
	&\|\chi_eR_0(\cL, \ell, \bfi)v\|_{L^2(\Gamma(\ell))}\leq \|\chi_{e}\|_{L^{2}(\Gamma(\ell))}\|R_0(\cL, \ell, \bfi)v\|_{L^{\infty}(\Gamma(\ell))}\label{g3.1}\\
	&\leq \ell_e^{1/2} \|R_0(\cL, \ell, \bfi)\|_{\cB{(L^2(\Gamma(\ell)), L^{\infty}(\Gamma(\ell)))}} \|v\|_{L^2(\Gamma(\ell))}.
	\end{align}
	Combining this with {\it(i)} we infer {\it (ii)}.
	
	{\it(ii)}$\implies${\it(iii)}. Let $e\in\cE_0$. For $f\in\dom(H_0(\cL,\ell))$ put $-f''-\bfi f=v$, then by {\it (ii)},
	\begin{align}\label{ss10}
	\|f_e\|_{L^2(e)}\lesssim \sqrt{\ell_e} \|v\|_{L^2(\Gamma)}=\sqrt{\ell_e} \|f''+\bfi f\|_{L^2(\Gamma(\ell))}.
	\end{align}
Combining  \eqref{ss10}, \eqref{g5}, \eqref{g5.1} we obtain that for every $e\in\cE$,
	\begin{align}
	\|f_e\|_{L^{\infty}(e)}&\leq2{\ell_e^{-1/2}}{\|f_e\|_{L^2(e)}}+{\ell_e^{3/2} \|f_e''\|_{L^2(e)}}\label{ss11}\\
	&\lesssim \|f''+\bfi f\|_{L^2(\Gamma(\ell))}+{\ell_e^{3/2} \|f_e''\|_{L^2(e)}},\\
	&\lesssim c(\ell) \left(\|f\|_{L^{2}(\Gamma(\ell))}+\|f''\|_{L^{2}(\Gamma(\ell))}\right),\label{ss12}
	\end{align}
	where $c(\ell)=\cO(1)$ as $\ell\rightarrow\wti\ell$. Note that we had to use \eqref{ss10} because $\ell_e\rightarrow 0$ for $e\in\cE_0$.
	
	{\it(iii)}$\implies${\it(i)}  Let $f\in\dom(H_0(\cL,\ell))$ and let $-f''-\bfi f=v$. Then by {\it (iii)}, 
	%\begin{align}
	%&\|u\|^2_{L^{\infty}(\Gamma(\ell))}\lesssim c(\ell)\left(\| u\|_{L^2(\Gamma(\ell))}^2+ \| u''\|_{L^2(\Gamma(\ell))}^2\right),
	%\end{align}
	%where $c(\ell)\underset{\ell\rightarrow\wti\ell}{=}\cO(1)$. Therefore, one has
	\begin{align}
          &\|R_0(\cL,\ell, \bfi)v\|^2_{L^{\infty}(\Gamma(\ell))}
            =\|f\|^2_{L^{\infty}(\Gamma(\ell))}\\
          &\quad\lesssim c \left(\|f\|^2_{L^{2}(\Gamma(\ell))}
            +\|v+\bfi f\|^2_{L^{2}(\Gamma(\ell))}\right)\label{ss20}\\
          &\quad\lesssim c
            \left(\|R_0(\cL,\ell,\bfi)v\|^2_{L^{2}(\Gamma(\ell))}
            +\|v\|^2_{L^{2}(\Gamma(\ell))}\right)\label{ss20.1}\\
          &\quad\lesssim c \|v\|^2_{L^{2}(\Gamma(\ell))},\label{ss20.2}
	\end{align}
	where in the last step we used \eqref{eq:gen_resolvent_bound_i}.
	This proves {\it(i)}.
	
	Next we prove \eqref{qqq1} and \eqref{b8}. To this end,  let $f\in\dom(H_0(\cL,\ell))$ and let $-f''-\bfi f=v$. Then
	using \eqref{aa14} and the Cauchy--Schwarz inequality, we  obtain
	\begin{align}
	\|f'\|^2_{L^2(\Gamma(\ell))}&\leq \big| \langle f, f''\rangle_{L^2(\Gamma(\ell))}\big|+ \big|\langle P_R \gamma_D^{\ell} f, Q P_R \gamma_D^{\ell} f\rangle_{L^2(\partial\Gamma)}\big|\\
	&\leq \|f\|_{L^2(\Gamma(\ell))}\|f''\|_{L^2(\Gamma(\ell))}+\|Q\| \|\gamma_D^{\ell} f\|_{L^2(\partial\Gamma)}^2\\
	& \leq \|f\|^2_{L^2(\Gamma(\ell))}+\|f''\|^2_{L^2(\Gamma(\ell))}+\|Q\| \|\gamma_D^{\ell} f\|_{L^2(\partial\Gamma)}^2\label{ss14}.
	\end{align}
	Employing \eqref{ss4} we estimate the third term in \eqref{ss14} and infer \eqref{qqq1}.
	
	Then, one has
	\begin{align}
	\|R_0(\cL,\ell, \bfi)v\|_{\hatt H^2(\Gamma(\ell))}&=\|f\|_{\hatt H^2(\Gamma(\ell))}^2=\|f\|_{L^2(\Gamma(\ell))}^2+\|f'\|_{L^2(\Gamma(\ell))}^2+\|f''\|_{L^2(\Gamma(\ell))}^2\no\\
	&\lesssim c(\ell)\left(\|f\|_{L^2(\Gamma(\ell))}^2+\|f''\|_{L^2(\Gamma(\ell))}^2\right)\lesssim c(\ell) \|v\|^2_{L^{2}(\Gamma(\ell))},\no
	\end{align}
	where $c(\ell)\underset{\ell\rightarrow\wti\ell}{=}\cO(1)$ and in the last step we proceeded as in \eqref{ss20}-\eqref{ss20.2}. 
\end{proof}

In the proof of Theorem \ref{y1} we will use the following geometric fact.
\begin{proposition}\label{nw10}
  Suppose that $A$ and $B$ are closed linear subspaces of a Hilbert
  space $X$, and that at least one of them is finite dimensional. Let
  $\{b_n\}_{n=1}^{\infty}\subset B$ be such that
  $\dist(b_n, A)\rightarrow0$ as $n\rightarrow\infty$. Then
  $\dist(b_n, A\cap B)\rightarrow0$ as $n\rightarrow\infty$.
\end{proposition}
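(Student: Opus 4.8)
\emph{Plan.} The idea is to reduce the statement to a single uniform ``angle'' estimate between $A$ and the part of $B$ transversal to $A$. Set $N:=A\cap B$; it is closed, being the intersection of two closed subspaces, so we may decompose $B$ orthogonally as $B=N\oplus B_0$ with $B_0:=B\cap N^\perp$, again closed. Observe that $A\cap B_0\subseteq (A\cap B)\cap N^\perp=N\cap N^\perp=\{0\}$. For any $b\in B$ write $b=u+v$ with $u\in N$, $v\in B_0$; then $\dist(b,N)\le\|v\|$, while $\dist(b,A)=\dist(v,A)$ because $u\in N\subseteq A$ and $A$ is a subspace (translating the infimum by $u$). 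Hence everything reduces to the claim
\[
\delta:=\inf\{\dist(v,A):v\in B_0,\ \|v\|=1\}>0
\]
(the case $B_0=\{0\}$ being trivial, since then $B=N\subseteq A$ and $\dist(b_n,A\cap B)=0$). Indeed, by homogeneity the claim gives $\dist(v,A)\ge\delta\|v\|$ for all $v\in B_0$, whence $\dist(b_n,N)\le\|v_n\|\le\delta^{-1}\dist(v_n,A)=\delta^{-1}\dist(b_n,A)\to0$, which is exactly what we want since $N=A\cap B$.

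\emph{The crux: proving $\delta>0$.} This is the only place where finite dimensionality is used, and it is genuinely needed (for two closed subspaces whose algebraic sum is not closed one can have $\delta=0$). I would argue by contradiction: suppose $v_k\in B_0$ with $\|v_k\|=1$ and $\dist(v_k,A)\to0$, and let $a_k:=P_A v_k$ be the orthogonal projection of $v_k$ onto the closed subspace $A$, so that $\|v_k-a_k\|=\dist(v_k,A)\to0$ and hence $\|a_k\|\to1$. If $A$ is finite dimensional, the bounded sequence $\{a_k\}$ has a subsequence converging to some $a\in A$ with $\|a\|=1$; along that subsequence $v_k\to a$ as well, and since $B_0$ is closed we get $a\in A\cap B_0=\{0\}$, contradicting $\|a\|=1$. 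If instead $B$ (and therefore $B_0$) is finite dimensional, the unit sphere of $B_0$ is compact and $v\mapsto\dist(v,A)$ is continuous (it is $1$-Lipschitz) and strictly positive on it --- strict positivity because $\dist(v,A)=0$ forces $v\in A$ (as $A$ is closed), hence $v\in A\cap B_0=\{0\}$ --- so the infimum is attained and positive.

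\emph{Assembling the proof.} Having established $\delta>0$, I would simply record the orthogonal decomposition $b_n=u_n+v_n$ with $u_n\in A\cap B$, $v_n\in B_0$, note $\dist(b_n,A)=\dist(v_n,A)\ge\delta\|v_n\|$ so that $\|v_n\|\to0$, and conclude $\dist(b_n,A\cap B)\le\|b_n-u_n\|=\|v_n\|\to0$. The routine points to verify are that $N$ and $B_0$ are closed, that $B=N\oplus B_0$ orthogonally, the identity $\dist(u+v,A)=\dist(v,A)$ for $u\in A$, and that $P_A$ is well defined since $A$ is closed; the only substantive step is the compactness argument giving $\delta>0$.
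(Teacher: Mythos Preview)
Your proof is correct and in fact yields a sharper conclusion than the paper's: the explicit angle bound $\delta>0$ gives the linear estimate $\dist(b_n,A\cap B)\le\delta^{-1}\dist(b_n,A)$, not merely convergence to zero. The paper takes a related but structurally different route: rather than decomposing $B=N\oplus B_0$ and isolating the angle between $B_0$ and $A$, it decomposes the ambient space as $X=(A\cap B)\oplus\bigl(A\cap(A\cap B)^\perp\bigr)\oplus A^\perp$, writes $b_n=x_n+y_n+z_n$ accordingly, notes $\|z_n\|=\dist(b_n,A)\to0$, and then argues by contradiction that if $Pb_n$ (the projection onto $(A\cap B)^\perp$) does not tend to zero, the normalized sequences $\|y_n\|^{-1}y_n\in A\cap(A\cap B)^\perp$ and $\|y_n\|^{-1}Pb_n\in B\cap(A\cap B)^\perp$ must share a nonzero subsequential limit lying in $A\cap B\cap(A\cap B)^\perp=\{0\}$. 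Your argument is more direct and quantitative; the paper's avoids naming the angle but reaches the same contradiction through the ambient decomposition.
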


As the following counterexample\footnote[1]{Due to Th.~Schlumprecht}
shows, the proposition may not hold if both $A$ and $B$ are infinite
dimensional. In the sequence space $X=\ell^2(\bbN)$ we consider
infinite dimensional  subspaces 
\begin{align}
  A &= \{(x_1,x_2,x_2,x_3,x_3,\dots)\in\ell^2(\bbN): x_k\in\bbC\}\\
  B &= \{(x_1,x_1,x_2,x_2,x_3,x_3,\dots)\in\ell^2(\bbN): x_k\in\bbC\},
\end{align}
and let
\begin{align}
  a_n &= (1,\ 1-\tfrac1n,\ 1-\tfrac1n,\ 
        1-\tfrac2n,\ 1-\tfrac2n,
        \ldots,\ \tfrac1n,\ 0,\ 0, \dots)\in A,\\
  b_n &= (1,\ 1,\phantom{-\tfrac1n}\ \ 1-\tfrac1n,\ 1-\tfrac1n,\ 
        1-\tfrac2n, \ldots,\ \tfrac1n,\ \tfrac1n,\ 0,\ldots)\in B,
\end{align}
for $n=1,2,\dots$. Then
$\dist(b_n,A)\le\|b_n-a_n\|=n^{-1/2}\rightarrow0$ while
$A\cap B=\{0\}$ and $\dist(b_n,A\cap B)=\|b_n\|\rightarrow+\infty$ as
$n\rightarrow\infty$.

\begin{proof}[Proof of Proposition~\ref{nw10}]
  Let $P$ denote the orthogonal projection onto $(A\cap B)^\bot$. We
  want to show that $\|Pb_n\|=\dist(b_n, A\cap B)\rightarrow0$ as
  $n\rightarrow\infty$.  Note that $Pb_n \in B$.

  Consider the orthogonal decomposition 
  \begin{equation}
    \label{eq:orth_decomp_X}
    X = \Big( (A\cap B) 
    \oplus \left(A\cap(A\cap B)^\bot\right) \Big) \oplus A^\bot,
  \end{equation}
  and split $b_n$ accordingly, $b_n=x_n+y_n+z_n$.  Applying $P$, we
  see that $Pb_n=y_n+Pz_n$.  We know that
  $\|z_n\| = \dist(b_n, A) \to 0$, therefore $Pz_n \rightarrow0$ and
  we conclude that either both sequences $(Pb_n)$ and $(y_n)$ converge
  to zero (and then the proof is finished), or else, may be by passing
  to a subsequence, they both are separated away from zero. Let us
  suppose that the latter holds. Then equality
  $\|y_n\|^{-1}Pb_n=\|y_n\|^{-1}y_n+\|y_n\|^{-1}Pz_n$ shows that the
  following two sequences,
  \begin{equation*}
    \big(\|y_n\|^{-1}Pb_n\big)\subset B\cap(A\cap B)^\bot
    \quad\text{ and }\quad
    \big(\|y_n\|^{-1}y_n\big)\subset  A\cap(A\cap B)^\bot,
  \end{equation*}
  are bounded. Since at least one of the subspaces $A$ or $B$ is
  finite dimensional, passing to a subsequence, we may conclude that
  at least one of the two sequences converges. Then by
  $\|y_n\|^{-1}Pz_n\rightarrow0$ both sequences must converge, and
  their common limit must be zero as it belongs to $A\cap B$ and
  $(A\cap B)^\bot$. Since $\|y_n\|^{-1}y_n$ is of unit length, the
  contradiction completes the proof.
\end{proof}

\begin{proof}[Proof of Theorem \ref{y1}]\lb{pgy1}	
  Due to the resolvent identity,
  equation~\eqref{eq:resolvent_identity}, it is enough to prove the
  statement for the free Laplacian. That is, we focus on the case of
  zero potential.
	
  Seeking a contradiction we assume that condition({\it iii}) from
  Theorem \ref{ss1} does not hold and obtain sequences
  $\{\ell_n\}_{n=1}^{\infty}\subset \bbR^{|\cE|}_{>0}$ and
  $\{\varphi_n\}_{n=1}^{\infty}\subset \dom( H(\cL,\ell_n))$ such that
  \begin{align}
    &\ell_n \to \wti\ell,\label{8.6}\\
    &\|\varphi_n\|_{L^{\infty}(\Gamma(\ell_n))}=1,
      \quad n\in\bbN,\label{y6}\\
    &\|\varphi_n\|_{L^{2}(\Gamma(\ell_n))}
      +\|\varphi_n''\|_{L^{2}(\Gamma(\ell_n))}\rightarrow0,
      \quad n\rightarrow\infty\label{y5}
      %	&(\phi_1^n,\phi_2^n):=\tr\varphi_n\in\cL,\ n\in\bbN.\label{pq33}
  \end{align}
  From equation (\ref{aa14}) one has
  \begin{align}
    \|\varphi_n'\|^2_{L^2(\Gamma(\ell_n))}=\langle \varphi_n,
    \varphi_n''\rangle_{L^2(\Gamma(\ell_n))}
    - \langle P_R \gamma_D\varphi_n, QP_R \gamma_D
    \varphi_n\rangle_{L^2(\partial\Gamma)}.
  \end{align}
  Thus, using \eqref{y6}, \eqref{y5} we get
  \begin{equation}\label{pq21}
    \|\varphi_n'\|_{L^2(\Gamma(\ell_n))} \underset{n\rightarrow \infty}{=}\cO(1).
  \end{equation}
  Using this, for each $e\in\cE_0$ one obtains
  \begin{equation}\label{pq20}
    |\varphi_n(a_e)-\varphi_n(b_e)|
    =\left|\int_{e}\partial_{\nu}\varphi_n\right| 
    \lesssim\sqrt{\ell_{n,e}}\|\varphi_n'\|_{L^2(\Gamma(\ell_n))}
    \rightarrow 0,\quad n\rightarrow \infty.
  \end{equation}
  Similarly, by \eqref{y5}, for each $e\in\cE_0$ one has
  \begin{equation}\label{nw1}
    | \varphi'_n(a_e)-\varphi'_n(b_e)|
    =\left|\int_{e}\varphi''_n\right| 
    \lesssim\sqrt{\ell_{n,e}}\|\varphi_n''\|_{L^2(\Gamma(\ell_n))} 
    \rightarrow 0,\quad n\rightarrow \infty.
  \end{equation}
  That is,
  \begin{equation}\label{nw7}
    \dist\big(\tr \varphi_n, D_0\oplus N_0\big) \rightarrow 0,\ n\rightarrow\infty.
  \end{equation}
  Next, using \eqref{y5} and the standard Sobolev inequalities on
  $\Gamma_+$, cf. \eqref{g5}--\eqref{g5.2}, we obtain
  \begin{align}
    &\|\varphi_n\|_{L^{\infty}(\Gamma_+(\ell_n))}\lesssim {\|\varphi_n\|_{L^2(\Gamma_+(\ell_n))}}+\|\varphi''_n\|_{L^2(\Gamma_+(\ell_n))}\underset{n\rightarrow \infty}{=}o(1),\label{y16}\\
    &\|\varphi_n'\|_{L^{\infty}(\Gamma_+(\ell_n))}\lesssim {\|\varphi_n\|_{L^2(\Gamma_+(\ell_n))}}+\|\varphi''_n\|_{L^2(\Gamma_+(\ell_n))}\underset{n\rightarrow \infty}{=}o(1).\label{y17}
  \end{align}
  In particular,
  \begin{equation}\label{pq30}
    \lim_{n\rightarrow\infty}\|\varphi_n\upharpoonright_{\partial\Gamma_+}\|_{L^{\infty}(\partial\Gamma_+)}=0,\ \lim\limits_{n\rightarrow\infty}\|\varphi_n'\upharpoonright_{\partial\Gamma_+}\|_{L^{\infty}(\partial\Gamma_+)}=0.
  \end{equation}
  Moreover, one has 
  \begin{equation}\label{8.18}
    \liminf\limits_{n\rightarrow\infty}\|\varphi_n\upharpoonright_{\partial\Gamma_0}\|_{L^{\infty}(\partial\Gamma_0)}>0.
  \end{equation}
  Indeed, assuming the contrary and passing to a subsequence if
  necessary, one gets that for any $e\in\cE_0$ and arbitrary
  $x\in e$
  \begin{equation}
    |\varphi_n(x)|
    \leq |\varphi_n(a_e)|+\left|\int_{e}\partial_{\nu}\varphi_n\right| 
    \leq |\varphi_n(a_e)|
    +\sqrt{\ell_{n,e}} \|\varphi_n'\|_{L^2(\Gamma(\ell_n))} 
    \to 0, \quad n\to\infty,
  \end{equation}
  contradicting \eqref{y6}. 
  
  Next, using  \eqref{nw7} and \eqref{pq30} we obtain
  \begin{equation}\label{nw3}
    \dist \big( \tr \varphi_n, (D_0\oplus N_0)\cap \ker(\,\dP_+) \big) \rightarrow 0,\ n\rightarrow\infty.
  \end{equation}
  Combining this with $\tr\varphi_n \in \cL$ and Proposition
  \ref{nw10}, we obtain that
  \begin{equation}
    \dist \big( (\phi_1^n,\phi_2^n), \cL\cap (D_0\oplus N_0)\cap \ker(\,\dP_+) \big) \rightarrow 0,\ n\rightarrow\infty.
  \end{equation}
  Interpreting Condition~\ref{newhyp} as
  $\cL\cap (D_0\oplus N_0)\cap \ker(\,\dP_+) \subset \{0\}\oplus
  L^2(\partial\Gamma)$, one has
  \begin{align}
    \|\gamma_D \varphi_n\|_{L^{2}(\partial\Gamma)}
    &= \dist \big( \tr\varphi_n, \{0\}\oplus L^2(\partial\Gamma) \big)\\
    &\leq  \dist \big( \tr\varphi_n, 
      \cL\cap (D_0\oplus N_0)\cap \ker(\,\dP_+) \big) \rightarrow 0,\ n\rightarrow\infty.
  \end{align}
  which contradicts \eqref{8.18}. 
	
  To prove the last statement assume that $P_R=0$. Then by Lemma
  \ref{y21} there exists a nonzero function $f$ constant on each edge
  satisfying the boundary conditions and such that
  supp$(f)\subset \Gamma_{0}$. Since $f''=0$ and
  $\| f\|_{L^2(\Gamma(\ell))}^2\rightarrow0$ as
  $\ell\rightarrow\wti \ell$, the inequality \eqref{ss4} does not
  hold.	
\end{proof}

As was pointed out in Introduction, our method of proving spectral
convergence relies upon a technique developed by P.\ Exner and O.\
Post \cite{EP, P06, P11, P12}.

\begin{definition}\label{d1}
  For each $t\in\bbR^n, n\in\bbN,$ let $H_{t}$ be a self-adjoint
  operator acting in the Hilbert space $\cH_{t}$. Then $H_{t}$ is said
  to converge in the \emph{generalized norm resolvent sense} to
  $H_{\wti{t}}$, as $t\rightarrow\wti{t}$ if for each $t\in\bbR^n$
  there exists a bounded linear operator
  $\cJ_{t}\in\cB(\cH_{\wti{t}},\cH_{t})$ such that	
  \begin{align}
    \cJ_{t}^*\cJ_{t}=I_{\cH_{\wti t}} &\text{\ for all\ }t\in\bbR^n,\label{c2.2}\\ 
    \|(I_{\cH_{t}}-\cJ_{t}\cJ_{t}^*)(H_{t}-&zI_{\cH_t})^{-1}\|_{\cB(\cH_{t})}\underset{t\rightarrow\wti t}{=}o(1), \label{c2}\\
    \|\cJ_{t}(H_{\wti{t}}-zI_{\cH_{\wti t}})^{-1}-(H_{t}-z&I_{\cH_t})^{-1}\cJ_{t}\|_{\cB(\cH_{\wti{t}},\cH_{t})}\underset{t\rightarrow\wti t}{=}o(1)\label{c3} ,
  \end{align}
  for each $z\in \C$, $\Im z\not=0$.
  In this case we write $H_{t} \xrightarrow[\text{}]{\text{gnr}}
  H_{\wti{t}}$, as $t\rightarrow \wti{t}$. 
\end{definition}

Assuming conditions \emph{(i)}-\emph{(iii)} of Theorem~\ref{ss1} we
focus on showing that
\begin{equation}\label{gnr}
  H(\cL, \ell) \xrightarrow[\text{}]{\text{gnr}} H(\wti\cL, \wti
  \ell),
  \quad \ell\rightarrow \wti\ell.
\end{equation}

As a first step, we show that, in the abstract setting, the
generalized norm resolvent convergence is preserved under bounded
perturbations.

\begin{theorem}
  \label{A1}
  Let $H^0_t$, $H^0_{\wti{t}}$ satisfy Definition \ref{d1}. Let
  $A_t\in\cB(\cH_t)$ be a family of self-adjoint, bounded operators
  satisfying the relations
  \begin{equation}\label{ap1}
    \|\cJ_{t}A_{\wti{t}}-A_{t}\cJ_{t}\|_{\cB(\cH_{\wti{t}},\cH_{t})}
    \underset{t\rightarrow\wti t}{=}o(1)
    \quad  \text{and }\quad
    \|A_{t}\|_{\cB(\cH_{t})}\underset{t\rightarrow\wti t}{=}\cO(1).
  \end{equation} 
  Then equations~\eqref{c2} and \eqref{c3} hold with
  $H_t = H^0_t + A_t$.
\end{theorem}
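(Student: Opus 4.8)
The plan is to exploit that $A_t$ is a uniformly bounded self-adjoint perturbation, so that $H_t:=H^0_t+A_t$ is self-adjoint on $\dom(H^0_t)$ (and $H_{\wti t}=H^0_{\wti t}+A_{\wti t}$ is self-adjoint on $\dom(H^0_{\wti t})$) and the second resolvent identity applies. Fixing $z$ with $\Im z\neq0$ and writing $R_t:=(H_t-zI)^{-1}$, $R^0_t:=(H^0_t-zI)^{-1}$ and likewise $R_{\wti t}$, $R^0_{\wti t}$, the first thing I would record is the factorization $R_t=R^0_tB_t$, $R_{\wti t}=R^0_{\wti t}B_{\wti t}$, where $B_t:=(I+A_tR^0_t)^{-1}=I-A_tR_t$ and $B_{\wti t}:=(I+A_{\wti t}R^0_{\wti t})^{-1}=I-A_{\wti t}R_{\wti t}$, together with the a priori bounds $\|B_t\|_{\cB(\cH_t)}\le 1+\|A_t\|_{\cB(\cH_t)}/|\Im z|=\cO(1)$ (from $\|R_t\|\le 1/|\Im z|$ and the second relation in \eqref{ap1}) and, by the same computation, $\|(I+R^0_tA_t)^{-1}\|=\|I-R_tA_t\|=\cO(1)$, both \emph{uniformly} in $t$; the operator $B_{\wti t}$ is fixed and hence simply bounded.

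Granting this, \eqref{c2} falls out at once: since $(I-\cJ_t\cJ_t^*)R_t=\big[(I-\cJ_t\cJ_t^*)R^0_t\big]B_t$, its norm is at most $\|(I-\cJ_t\cJ_t^*)R^0_t\|\,\|B_t\|=o(1)\cdot\cO(1)=o(1)$ by \eqref{c2} for $H^0_t$ and the uniform bound on $B_t$.

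For \eqref{c3} I would introduce $X_t:=\cJ_tR_{\wti t}-R_t\cJ_t$ and compute it directly. Substituting the factorizations and inserting $R^0_t\cJ_tB_{\wti t}$ as an intermediate term gives $X_t=(\cJ_tR^0_{\wti t}-R^0_t\cJ_t)B_{\wti t}+R^0_t(\cJ_tB_{\wti t}-B_t\cJ_t)$; then, using $B_t=I-A_tR_t$ and $B_{\wti t}=I-A_{\wti t}R_{\wti t}$, the second bracket rearranges to $A_tR_t\cJ_t-\cJ_tA_{\wti t}R_{\wti t}=-A_tX_t+(A_t\cJ_t-\cJ_tA_{\wti t})R_{\wti t}$. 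Thus $X_t$ reappears on the right, and the identity collapses to $(I+R^0_tA_t)X_t=\eta_t$, where $\eta_t:=(\cJ_tR^0_{\wti t}-R^0_t\cJ_t)B_{\wti t}+R^0_t(A_t\cJ_t-\cJ_tA_{\wti t})R_{\wti t}$. I expect $\|\eta_t\|=o(1)$ to be routine: the first summand is $o(1)\cdot\cO(1)$ by \eqref{c3} for $H^0_t$ and $\|B_{\wti t}\|<\infty$, and the second is at most $|\Im z|^{-2}\|A_t\cJ_t-\cJ_tA_{\wti t}\|=o(1)$ by the first relation in \eqref{ap1}. Finally, multiplying by the uniformly bounded inverse $(I+R^0_tA_t)^{-1}=I-R_tA_t$ yields $X_t=(I-R_tA_t)\eta_t$, so $\|X_t\|=\cO(1)\cdot o(1)=o(1)$, which is \eqref{c3}.

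The one place where care is needed is this self-referential step: the natural expansion of $X_t$ regenerates $X_t$ on the right-hand side, and closing the argument requires inverting $I+R^0_tA_t$ with a bound \emph{independent of $t$}. That uniform invertibility is exactly where the hypothesis $\|A_t\|=\cO(1)$ enters in an essential way (rather than merely $A_t\in\cB(\cH_t)$ for each $t$); everything else is bookkeeping with the second resolvent identity and the universal bound $\|R_t\|,\|R^0_t\|\le 1/|\Im z|$.
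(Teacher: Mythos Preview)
Your proof is correct and follows the same overall strategy as the paper: use the second resolvent identity to factor $R_t$ through $R^0_t$, dispatch \eqref{c2} immediately, and for \eqref{c3} derive an algebraic identity expressing $X_t=\cJ_tR_{\wti t}-R_t\cJ_t$ times an invertible factor as an explicit $o(1)$ quantity.

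The one noteworthy difference is in the algebraic identity for \eqref{c3}. You arrive at
\[
(I+R^0_tA_t)\,X_t=\eta_t,
\]
with the invertible factor on the \emph{left} and depending on $t$; you then need (and correctly supply) the uniform bound $\|(I+R^0_tA_t)^{-1}\|=\|I-R_tA_t\|=\cO(1)$. The paper instead proves
\[
X_t\,(I+A_{\wti t}R^0_{\wti t})=(I-R_tA_t)(\cJ_tR^0_{\wti t}-R^0_t\cJ_t)+R_t(A_t\cJ_t-\cJ_tA_{\wti t})R^0_{\wti t},
\]
with the invertible factor on the \emph{right} and \emph{fixed} at $t=\wti t$, so its bounded invertibility is automatic and no uniformity in $t$ is required for that step. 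Your route is equally valid; the paper's variant simply sidesteps the point you flagged as ``the one place where care is needed.''
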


\begin{proof}
The proof relies on the resolvent identity
\begin{align}
R(t) &= R_0(t)-R(t)A_tR_0(t)\label{ap4}\\
     &=R_0(t) + R_0(t)A_tR(t), \ t\in\bbR^n. \label{ap41}
\end{align}
where
\begin{equation}\no
  R(t):=(H^0_{t}+A_{t}-zI_{\cH_t})^{-1},
  \qquad  R_0(t):=(H^0_{t}-zI_{\cH_t})^{-1}, 
  \quad \Im z\not=0.
\end{equation}
In order to verify \eqref{c2} for $H_t = H^0_t + A_t$, we combine
\eqref{c2} (for $R_0(t)$) and \eqref{ap41} and obtain
\begin{align*}
\|(I_{\cH_{t}}-\cJ_{t}\cJ_{t}^*)R(t)\|_{\cB(\cH_{t})}&\leq \|(I_{\cH_{t}}-\cJ_{t}\cJ_{t}^*)R_0(t)\|_{\cB(\cH_{t})}
+\|(I_{\cH_{t}}-\cJ_{t}\cJ_{t}^*)R_0(t)A_tR(t)\|_{\cB(\cH_{t})} \\
&\underset{t\rightarrow\wti t}{=}o(1)\left(1+\|A_tR(t)\|_{\cB(\cH_{t})}\right)\underset{t\rightarrow\wti t}{=}o(1), \no
\end{align*}
where we used the second equality in \eqref{ap1}, and the general
resolvent bound \eqref{eq:gen_resolvent_bound}.

The identity
\begin{multline}
  \label{ap6}
  \left(\cJ_{t}R(\wti t) - R(t)\cJ_{t}\right)
  \left(I_{\cH_{\wti t}}+A_{\wti t}R_0(\wti t)\right) \\ \nonumber
  = \left(I_{\cH_{t}}-R(t) A_t\right) \left(\cJ_t R_0(\wti t) - R_0(t) \cJ_t\right) 
  + R(t)\left(A_t \cJ_{t} - \cJ_{t}A_{\wti t}\right)R_0(\wti t)
\end{multline}
may be verified by substituting \eqref{ap4} for $R(\wti t)$ and $R(t)$
on the left-hand side and expanding.  Using \eqref{c3}, \eqref{ap1}
and \eqref{eq:gen_resolvent_bound}, we arrive at
\begin{equation}\label{ap7}
  \left\|\left(\cJ_{t}\wti R-R(t)\cJ_{t}\right)
    \left(I_{\cH_{\wti t}}+A_{\wti t}R_0(\wti t)\right)
    \right\|_{\cB(\cH_{\wti t},\cH_{t})}
  \underset{t\rightarrow\wti t}{=}o(1).
\end{equation}
Moreover, due to the identity
\begin{equation*}
  I_{\cH_{\wti t}}+A_{\wti t}R_0(\wti t) = 
  \left(H_{\wti t}-z I_{H_{\wti t}}+A_{\wti t}\right)R_0(\wti t),
\end{equation*}
the operator $I_{\cH_{\wti t}}+A_{\wti t}\wti R_0$ is boundedly
invertible on $\Im z \neq 0$. Thus \eqref{ap7} implies \eqref{c3} for
$H_t=H^0_t+A_t$.
\end{proof}

In the following theorem we establish a version of \eqref{c2.2} and \eqref{c2} in the context of graphs with vanishing edges. Let us recall definition of $\cJ_{\ell}$ from \eqref{d5}.

\begin{theorem}\label{uu77}Assume conditions {\it (i)-(iii)} of
  Theorem~\ref{ss1} hold.  Then
  \begin{equation}
    \cJ_{\ell}^*\cJ_{\ell}=I_{L^2(\Gamma(\wti\ell))},\ \ell\in\bbR^{|\cE|}_{>0},\label{d3}\\ 
  \end{equation}
  and
  \begin{equation}
    \big\|(I_{L^2(\Gamma(\cG;\ell))}-\cJ_{\ell}\cJ_{\ell}^*)R(\cL,\ell, z)\big\|_{\cB(L^2(\Gamma(\cG;\ell)))}\underset{\ell\rightarrow\wti\ell}{=} o(1), \label{d4}
  \end{equation}
  for each $z\in\bbC$, $\Im z\not=0.$
\end{theorem}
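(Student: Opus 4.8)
The plan is to dispose of \eqref{d3} by a change of variables and to reduce \eqref{d4} to part \emph{(ii)} of Theorem~\ref{ss1}.

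First I would note that $\cJ_\ell$ acts separately on each edge of $\cE_+$: on such an edge the map $f_e \mapsto \sqrt{\wti\ell_e/\ell_e}\, f_e(\,\cdot\,\wti\ell_e/\ell_e)$ is a unitary operator from $L^2([0,\wti\ell_e])$ onto $L^2([0,\ell_e])$, since the linear substitution $[0,\ell_e]\to[0,\wti\ell_e]$ has constant Jacobian $\wti\ell_e/\ell_e$ and the prefactor in \eqref{d5} is exactly its square root. Extending by zero on the edges of $\cE_0$, it follows that $\cJ_\ell$ is an isometry from $L^2(\Gamma(\wti\ell))$ onto the closed subspace $L^2(\Gamma_+(\ell))\oplus\{0\}$ of $L^2(\Gamma(\ell))$ consisting of functions vanishing on the vanishing edges. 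This yields \eqref{d3}, i.e. $\cJ_\ell^*\cJ_\ell = I_{L^2(\Gamma(\wti\ell))}$, and identifies $\cJ_\ell\cJ_\ell^*$ with the orthogonal projection onto $\ran(\cJ_\ell)=L^2(\Gamma_+(\ell))\oplus\{0\}$. Consequently $I_{L^2(\Gamma(\ell))}-\cJ_\ell\cJ_\ell^*$ is the orthogonal projection onto $L^2(\Gamma_0(\ell))$, which is nothing but the multiplication operator $\sum_{e\in\cE_0}\chi_e$.

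With this identification in hand, \eqref{d4} amounts to showing $\|\sum_{e\in\cE_0}\chi_e R(\cL,\ell,z)\|_{\cB(L^2(\Gamma(\ell)))}\to 0$. For $z=\bfi$ this is immediate from part \emph{(ii)} of Theorem~\ref{ss1}: summing the bounds $\|\chi_e R(\cL,\ell,\bfi)\|<c\sqrt{\ell_e}$ over the finitely many $e\in\cE_0$ and using $\ell_e\to\wti\ell_e=0$ gives $\|\sum_{e\in\cE_0}\chi_e R(\cL,\ell,\bfi)\|\le c\sum_{e\in\cE_0}\sqrt{\ell_e}=o(1)$ as $\ell\to\wti\ell$. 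To treat a general $z$ with $\Im z\neq0$ I would use the resolvent identity $R(\cL,\ell,z)=R(\cL,\ell,\bfi)+(z-\bfi)R(\cL,\ell,\bfi)R(\cL,\ell,z)$, multiply it on the left by $I-\cJ_\ell\cJ_\ell^*$ to factor out $(I-\cJ_\ell\cJ_\ell^*)R(\cL,\ell,\bfi)$, and bound the remaining factor $I_{L^2(\Gamma(\ell))}+(z-\bfi)R(\cL,\ell,z)$ in norm by $1+|z-\bfi|/|\Im z|$ using the general resolvent estimate \eqref{eq:gen_resolvent_bound}; since this bound is uniform in $\ell$, the product is $o(1)$.

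I do not expect a genuine obstacle here: all the analytic substance is packed into part \emph{(ii)} of Theorem~\ref{ss1} (equivalently, into Condition~\ref{newhyp} via Theorem~\ref{y1}), and the present argument merely repackages it together with the elementary bound $\|R(\cL,\ell,z)\|\le1/|\Im z|$. The only point requiring a little care is the bookkeeping that identifies $\ran(\cJ_\ell)$, and hence $I-\cJ_\ell\cJ_\ell^*$, with the multiplication operator $\sum_{e\in\cE_0}\chi_e$; once that is settled, \eqref{d4} follows in a couple of lines.
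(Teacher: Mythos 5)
Your proposal is correct and follows essentially the same route as the paper: identify $\cJ_{\ell}$ as an edgewise unitary extended by zero, so that $I_{L^2(\Gamma(\ell))}-\cJ_{\ell}\cJ_{\ell}^*$ is multiplication by the characteristic function of $\Gamma_0(\ell)$, and then sum the bounds of part \emph{(ii)} of Theorem~\ref{ss1} over the finitely many vanishing edges. Your explicit passage from $z=\bfi$ to general $z$ via the first resolvent identity and the uniform bound $\|R(\cL,\ell,z)\|\leq 1/|\Im z|$ is a detail the paper's proof applies silently, and you handle it correctly.
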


\begin{proof}
	Using change of variables, one obtains
	\begin{align}
	&\cJ_{\ell}^*\in\cB\big(L^2(\Gamma(\ell)),L^2(\Gamma(\wti\ell))\big),\label{d6}\\
	&(\cJ^*_{\ell}f)_e(x):=\sum_{e\in\cE_+} \chi_{e}(x)
          \sqrt{\frac{\ell_j}{\tilde{\ell}_j}}
          f_e\left(\frac{x\ell_j}{\tilde{\ell}_j}\right),
          \quad f\in L^2(\Gamma(\ell)),\ x\in\Gamma(\wti\ell).\label{d7}
	\end{align}
	A direct computation shows that \eqref{d5} and \eqref{d7} yield \eqref{d3}.  
	Moreover, one has
	\begin{equation}
          \cJ_{\ell}\cJ_{\ell}^*\in\cB(L^2(\Gamma(\ell))), 
          \quad \cJ_{\ell}\cJ_{\ell}^*f=\chi_{\Gamma_+(\ell)} f,
          \quad f\in L^2(\Gamma(\ell)), \label{d8}\\
	\end{equation}
	where $\chi_{\Gamma_+(\ell)}$ denotes the characteristic  function of ${\Gamma_+(\ell)}$.
	
	By Theorem \ref{ss1}{\it (ii)} one has
	\begin{align}
	\big\|(I_{L^2(\Gamma(\ell))}-\cJ_{\ell}\cJ_{\ell}^*)R(\cL,
          \ell, z)\big\|_{L^2(\Gamma(\ell))}
          &=\big\| \sum_{e\in\cE_0}\chi_e R(\cL, \ell, z) \big\|_{L^2(\Gamma(\ell))}\label{g1}\\
	\quad\leq \sum_{e\in\cE_0}\left\|\chi_e R(\cL, \ell, z)\right\|_{L^2(\Gamma(\ell))}&\underset{\ell\rightarrow\wti\ell}{=}\sum_{e\in\cE_0}\cO(\ell_e^{1/2})\underset{\ell\rightarrow\wti\ell}{=}o(1),
	\end{align}
	as asserted.
\end{proof}

In the following theorem we establish a version of \eqref{c3} in the
context of graphs with vanishing edges. Together with
Theorem~\ref{uu77} this will conclude the proof of Theorem~\ref{cc1}.

\begin{theorem}\label{u7}
  Assume conditions {\it(i)-(iii)} of Theorem~\ref{ss1}, and recall
  the operator $H(\wti\cL, \wti\ell)$ from Theorem~\ref{a30.1}. Then
  \begin{equation}
    \label{u8}
    \Big\|\cJ_{\ell}R(\wti\cL,\wti\ell,z)-R(\cL,\ell,z)\cJ_{\ell}\Big\|_{\cB\left(L^2(\Gamma(\wti\ell)), L^2(\Gamma(\ell))\right)}\underset{\ell\rightarrow\wti\ell}{=} o\left(1\right),
  \end{equation}
  for each $z\in\bbC$, $\Im z\not=0$.
\end{theorem}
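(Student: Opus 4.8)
The plan is to reduce to the free Laplacian and then establish the norm resolvent estimate directly by testing against an arbitrary $g \in L^2(\Gamma(\wti\ell))$. By the resolvent identity~\eqref{eq:resolvent_identity} together with Theorem~\ref{A1} (taking $H^0_\ell$ to be the free operators and $A_\ell$ the multiplication by $q^\ell$, whose intertwining property~\eqref{ap1} follows from the second line of~\eqref{pot}), it suffices to prove~\eqref{u8} when $q^\ell\equiv0$ and $q^{\wti\ell}\equiv 0$. So from now on write $R(\wti\ell,z)$, $R(\ell,z)$ for the free resolvents and fix $z$ with $\Im z\neq 0$.

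First I would set $u := R(\wti\ell,z)g \in \dom(H(\wti\cL,\wti\ell))$ and $\psi := \cJ_\ell u - R(\ell,z)\cJ_\ell g$. The strategy is to show $\psi$ is small by showing that $(H(\cL,\ell)-z)\psi$ is small in $L^2(\Gamma(\ell))$ and then invoking the a priori bound~\eqref{b8} (equivalently \eqref{eq:gen_resolvent_bound_i} together with Theorem~\ref{ss1}), \emph{provided} $\psi$ lies in $\dom(H(\cL,\ell))$. The function $\cJ_\ell u$, however, is supported on $\Gamma_+(\ell)$ and need not satisfy the vertex conditions $\cL$, so it is not in the domain. The remedy is a correction: using that $\wti\cL$ is the symplectic reduction of $\cL$ (Theorem~\ref{a30.1}), lift $\tr^{\wti\ell}u\in\wti\cL$ to an element of $\cL\cap(D_0\oplus N_0)$, i.e.\ choose boundary data $(\phi_1,\phi_2)\in\cL$ with $\phi_i|_{\partial\Gamma_+}=\gamma^{\wti\ell}_{D/N}u$ and $\phi_1(a_e)=\phi_1(b_e)$, $\phi_2(a_e)=-\phi_2(b_e)$ for $e\in\cE_0$; then build on each vanishing edge $e$ a fixed-profile function $w_e$ with the prescribed traces $\phi_1,\phi_2$ at its endpoints, scaled so that $\|w_e\|_{L^2(e)} + \|w_e''\|_{L^2(e)} = O(\ell_e^{1/2})$ (a linear-plus-quadratic profile suffices: on $[0,\ell_e]$ take a cubic matching the four boundary values, whose $L^2$ norm is $O(\ell_e^{1/2})$ and whose second derivative is $O(\ell_e^{-3/2})$ in $L^\infty$, hence $O(\ell_e^{-1})$ in $L^2$ — this last quantity does \emph{not} go to zero, which is exactly why the argument is delicate). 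Let $w := \sum_{e\in\cE_0} w_e \chi_e$ and set $\Phi := \cJ_\ell u + w$. Then $\Phi\in\dom(H(\cL,\ell))$ by construction, and $\Phi - \cJ_\ell u = w$ is small in $L^2$ but its Laplacian is only $O(1)$, not $o(1)$.

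The key computation is then to estimate $(H(\cL,\ell)-z)\Phi - \cJ_\ell g$ in $L^2(\Gamma(\ell))$. On $\Gamma_+(\ell)$, since $\cJ_\ell$ implements the linear rescaling $x\mapsto x\ell_e/\wti\ell_e$, a direct change of variables gives $(-\tfrac{d^2}{dx^2})\cJ_\ell u = (\wti\ell_e/\ell_e)^2\,\cJ_\ell(u'')$ on each $e\in\cE_+$, so that $(H(\cL,\ell)-z)\cJ_\ell u - \cJ_\ell g = \cJ_\ell\big[((\wti\ell_e/\ell_e)^2-1)(-u'')\big] = o(1)$ in $L^2$, using $\ell\to\wti\ell$ and $\|u''\|_{L^2}\le\|g\|_{L^2}+|z|\,\|u\|_{L^2}\lesssim\|g\|_{L^2}$. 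On $\Gamma_0(\ell)$ we have $\cJ_\ell u\equiv 0$ and $\cJ_\ell g\equiv 0$, so the contribution there is exactly $(H(\cL,\ell)-z)w$, which is $O(1)\cdot\|g\|_{L^2}$ but \emph{not} $o(1)$ — this is the heart of the matter. To kill this term I would \emph{not} try to make $w$ small in $H^2$; instead observe that $(H(\cL,\ell)-z)\Phi - \cJ_\ell g =: r$ is $O(1)$ in $L^2$ but supported (up to $o(1)$) on $\Gamma_0(\ell)$, so that $\chi_{\Gamma_0}r = r + o(1)$, and then apply Theorem~\ref{ss1}(ii): $\psi_0 := R(\ell,z)r$ satisfies $\|\chi_e R(\ell,z)\|\lesssim\sqrt{\ell_e}$ for each $e\in\cE_0$, but this controls only the part of $\psi_0$ on the vanishing edges, not on $\Gamma_+$.

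I expect the \textbf{main obstacle} to be precisely this last point: showing that $R(\ell,z)$ applied to an $L^2$-bounded function concentrated near $\Gamma_0$ produces something that is $o(1)$ \emph{globally}, including on the non-vanishing part. The resolution is to pair: for any $h\in L^2(\Gamma(\ell))$, $\langle R(\ell,z)r, h\rangle = \langle r, R(\ell,\bar z)h\rangle$, and $r$ being essentially supported on $\bigcup_{e\in\cE_0}e$ of total length $O(\sum_{e\in\cE_0}\ell_e)$ while $\|R(\ell,\bar z)h\|_{L^\infty}\lesssim\|h\|_{L^2}$ by Theorem~\ref{ss1}(i), gives $|\langle r,R(\ell,\bar z)h\rangle|\lesssim \|r\|_{L^2}\cdot(\sum_{e\in\cE_0}\ell_e)^{1/2}\cdot\|h\|_{L^2} = o(1)\|h\|_{L^2}$ — wait, this needs $\|r\|_{L^2}$ times the measure of its support, so more carefully $|\langle r, R(\ell,\bar z)h\rangle| \le \|r\|_{L^1(\Gamma_0)}\|R(\ell,\bar z)h\|_{L^\infty} \lesssim (\sum_{e\in\cE_0}\ell_e^{1/2})\|r\|_{L^2(\Gamma_0)}\|h\|_{L^2} = o(1)\|h\|_{L^2}$, since $\|r\|_{L^2(\Gamma_0)}=O(1)$. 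Hence $\|R(\ell,z)r\|_{L^2}=o(1)$. Finally, $\psi = \cJ_\ell u - R(\ell,z)\cJ_\ell g = \Phi - w - R(\ell,z)\cJ_\ell g$, and since $(H(\cL,\ell)-z)(\Phi - R(\ell,z)\cJ_\ell g) = r$ with both in $\dom(H(\cL,\ell))$, we get $\Phi - R(\ell,z)\cJ_\ell g = R(\ell,z)r = o(1)$ in $L^2$; combined with $\|w\|_{L^2}=O(\sum_{e\in\cE_0}\ell_e^{1/2})=o(1)$, we conclude $\|\psi\|_{L^2}=o(1)$ uniformly in $\|g\|_{L^2}\le 1$, which is~\eqref{u8}.
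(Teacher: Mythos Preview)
Your plan has the right ingredients---reduce to the free Laplacian via Theorem~\ref{A1}, lift $\tr^{\wti\ell}u\in\wti\cL$ to $(\phi_1,\phi_2)\in\cL\cap(D_0\oplus N_0)$, and exploit the $L^2\to L^\infty$ resolvent bound by duality---but the execution has a genuine gap. The claim ``$\|r\|_{L^2(\Gamma_0)}=O(1)$'' is false: any $w_e$ on $[0,\ell_e]$ matching the four prescribed boundary data $w_e(0)=w_e(\ell_e)=\phi_1(a_e)$ and $w_e'(0)=w_e'(\ell_e)=c:=\phi_2(a_e)$ (this is exactly what $D_0\oplus N_0$ imposes) satisfies $\|w_e''\|_{L^2(e)}\gtrsim |c|\,\ell_e^{-1/2}$; the cubic minimizer realizes this bound. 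There is no reason for the Neumann part $c$ of the lift to vanish, so $\|r\|_{L^2(\Gamma_0)}$ blows up like $|\ell_0|^{-1/2}$. (Your own parenthetical already gives $\|w_e''\|_{L^2}=O(\ell_e^{-1})$, which is inconsistent with the later ``$r$ is $O(1)$''.) Feeding the correct bound into your $L^1$--$L^\infty$ duality gives $\|r\|_{L^1(\Gamma_0)}\le |\ell_0|^{1/2}\|r\|_{L^2(\Gamma_0)}=O(1)$, and the pairing $\langle r, R(\ell,\bar z)h\rangle$ is only $O(1)\|h\|$, not $o(1)\|h\|$. The fix is one more integration by parts in that pairing, $\langle -w_e'',Rh\rangle_e=-[w_e'\,\overline{Rh}]_0^{\ell_e}+\langle w_e',(Rh)'\rangle_e$; both terms are then $O(|c|\sqrt{\ell_e})\|(Rh)'\|_{L^2}$ and \eqref{qqq1}--\eqref{b8} control $\|(Rh)'\|_{L^2}$. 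But once you do this you have reproduced the paper's computation \eqref{vv35new}--\eqref{vv37}, which is cleaner because it never constructs $w$ as a function: it lifts only the \emph{boundary data} $w=(w_1,w_2)\in\cL$, uses the symplectic orthogonality $\omega(\tr u,w)=0$ together with \eqref{aa19} to move the boundary integral from $\partial\Gamma_+$ to $\partial\Gamma_0$, and estimates $\int_{\partial\Gamma_0}\overline{\partial_\nu u}\,w_1-\overline u\,w_2$ directly via $\int_e u''$ and $\int_e u'$.

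There is a second, smaller gap: $\Phi=\cJ_\ell u+w$ is not actually in $\dom(H(\cL,\ell))$. On $\partial\Gamma_+$ the trace of $\cJ_\ell u$ is the \emph{rescaled} trace $\big(\sqrt{\wti\ell_e/\ell_e}\,u(a_e),\,(\wti\ell_e/\ell_e)^{3/2}\partial_\nu u(a_e)\big)$, not $\tr^{\wti\ell}u$, so $\tr^\ell\Phi\ne(\phi_1,\phi_2)$ and hence $\tr^\ell\Phi\notin\cL$. The paper handles this by splitting: in Step~1 it freezes $\ell_+$ and sends only $\ell_0\to0$ (so $\cJ_\ell$ is pure extension by zero and no rescaling occurs), then in Step~2 it rescales the non-vanishing edges using the known continuity result of \cite{BK12}, and finally composes the two in Step~3.
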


\begin{proof}
  We split the proof into several natural steps. In the first step we
  prove \eqref{u8} in the situation when the non-vanishing edges are
  being fixed while the vanishing edges tend to zero. This is the most
  challenging part of the proof. In the second step we deal with
  \eqref{u8}when the vanishing edges are absent, while the
  non-vanishing edges rescale non-singularly. Finally, in the third
  step we put everything together, and obtain \eqref{u8} as asserted.
	
  Note that by Theorem \ref{A1} we may assume that $q^{\ell}\equiv 0$
  for all $\ell$.
	
  {\bf Step 1.} Let us denote
  $\ell=(\ell_+, \ell_0)$, $\hatt\ell:= (\ell_+, 0)$. Then the scaling
  operator acting from $\Gamma(\hatt\ell)$ to $\Gamma(\ell)$ is given
  by
  $\bbJ_{\ell,\hatt\ell} \in
  \cB\big(L^2(\Gamma(\wti\ell)),L^2(\Gamma(\hatt\ell))\big)$,
  \begin{equation}
    \label{eq:bbJ_def}
    (\bbJ_{\ell,\hatt\ell} f)(x)
    = \sum_{e\in\cE_+}
    \chi_{e}(x) f\left(x\right),
    \quad x\in\Gamma(\ell),
  \end{equation}

  The goal of this step is to prove a version of \eqref{u8} with
  respect to this scaling operator. Namely, we will prove that
  \begin{align}
    \begin{split}\label{u8new}
      \Big\|\bbJ_{\ell,\hatt\ell}R(\wti\cL,\hatt\ell,z)-R(\cL,\ell,z)\bbJ_{\ell,\hatt\ell}\Big\|_{\cB\left(L^2(\Gamma(\hatt\ell)), L^2(\Gamma(\ell))\right)}\underset{\ell_0\rightarrow 0}{=} o\left(1\right),
    \end{split}
  \end{align}
  holds uniformly in $\ell_+$ satisfying
  \begin{equation}\label{az5}
    \frac{|\wti \ell|}{2}\leq |\ell_+|\leq |\wti\ell|.
  \end{equation}
  
  It suffices to prove that the inequality 
  \begin{equation}
    \label{u9}
      \left|\left\langle f, \big(\bbJ_{\ell,\hatt\ell}R(\wti\cL,\hatt\ell,z)-R(\cL,\ell,z)\bbJ_{\ell,\hatt\ell} \big)g\right\rangle_{L^2(\Gamma(\ell))}\right|\leq c(\ell) \|f\|_{L^2(\Gamma(\ell))} \|g\|_{L^2(\Gamma(\hatt\ell))},
  \end{equation}
  holds for arbitrary $f\in L^2(\Gamma(\ell))$ and $g\in L^2(\Gamma(\hatt\ell))$, with 
  \begin{equation}\label{az6}
    \sup\limits_{\ell_+:\  \frac{|\wti \ell|}{2}\leq |\ell_+|\leq |\wti\ell|}c(\ell)=o\left(1\right)\  \text{as\ }
    \ell_0\rightarrow 0.
  \end{equation}
  Let us denote
  \begin{equation}\label{u10}
    u:=R(\cL,\ell,\overline{z})f \text{\ and\ }v:=R(\wti\cL,\hatt\ell,z)g.
  \end{equation}
  Rewriting  the left-hand side of \eqref{u9} we obtain
  \begin{align}
    &\Big\langle f, \bbJ_{\ell,\hatt\ell}\big(H(\wti\cL, \hatt\ell)-z\big)^{-1}g\Big\rangle_{L^2(\Gamma(\ell))}-\Big\langle \left(H(\cL, \ell)-\overline{z} \right)^{-1}f, \bbJ_{\ell,\hatt\ell} g\Big\rangle_{L^2(\Gamma(\ell))}\label{pr1}\\
    &\quad=\Big\langle \big(H(\cL, \ell)-\overline{z}\big)u, \bbJ_{\ell,\hatt\ell}v\Big\rangle_{L^2(\Gamma(\ell))}-\Big\langle u, \bbJ_{\ell,\hatt\ell} \big(H(\wti\cL, \hatt\ell)-z\big) v\Big\rangle_{L^2(\Gamma(\ell))}\\
    &\quad=\Big\langle H(\cL, \ell)u, \bbJ_{\ell,\hatt\ell}v\Big\rangle_{L^2(\Gamma(\ell))}-\Big\langle \overline{z}u, \bbJ_{\ell,\hatt\ell}v\Big\rangle_{L^2(\Gamma(\ell))}\\
    &\hspace{3.21cm}- \Big\langle u, \bbJ_{\ell,\hatt\ell} H(\wti\cL, \hatt\ell) v\Big\rangle_{L^2(\Gamma(\ell))} +\Big\langle u,z \bbJ_{\ell,\hatt\ell} v\Big\rangle_{L^2(\Gamma(\ell))}   \\
    &\quad=\Big\langle H(\cL, \ell)u, \bbJ_{\ell,\hatt\ell}v\Big\rangle_{L^2(\Gamma(\ell))}-\Big\langle u, \bbJ_{\ell,\hatt\ell} H(\wti\cL, \hatt\ell) v\Big\rangle_{L^2(\Gamma(\ell))}.\label{pr2}
  \end{align}
  Henceforth, our objective is to show that 
  \begin{equation}
    \label{u22new}
    \left|\left\langle H(\cL, \ell)u,
        \bbJ_{\ell,\hatt\ell}v\right\rangle_{L^2(\Gamma(\ell))}
      -\big\langle u, \bbJ_{\ell,\hatt\ell} H(\wti\cL, \hatt\ell)
      v\big\rangle_{L^2(\Gamma(\ell))}\right| 
    = o\left(1\right) \|f\|_{L^2(\Gamma(\ell))} \|g\|_{L^2(\Gamma(\hatt\ell))},
  \end{equation}
  as $\ell_0\rightarrow0$, uniformly in $\ell_+$ satisfying \eqref{az5}.
  
  Denoting the left-hand side by $Z$ and integrating by parts one obtains
  \begin{align}
    Z &:= \big\langle H(\cL, \ell)u, \bbJ_{\ell,\hatt\ell}v\big\rangle_{L^2(\Gamma(\ell))}-\big\langle u, \bbJ_{\ell,\hatt\ell} H(\wti\cL, \hatt\ell) v\big\rangle_{L^2(\Gamma(\ell))}\\
      &=\int_{\Gamma_+(\ell)} \overline{u''}\bbJ_{\ell,\hatt\ell}v-\overline{u}\bbJ_{\ell,\hatt\ell}v''=\int_{\Gamma_+(\ell)} \overline{u''}v-\overline{u}v''=\int_{\partial\Gamma_+} \overline{\partial_{\nu}u} v-\overline{u}\partial_{\nu}v,\label{vv26}
  \end{align}
  where we used
  \begin{equation}\no
    (\bbJ_{\ell,\hatt\ell}f)(x)=\chi_{\Gamma_{+}(\ell)}f(x),\ x\in\Gamma_{+}(\ell)
  \end{equation}
  due to the fact that the fact that $\ell_+$ is fixed.
  
  By Theorem \ref{a30.1},
  $\tr v \in \wti\cL = \dP_+ \big(\cL \cap (D_0 \oplus N_0) \big)$.
  Let $G : \wti\cL \to \cL \cap (D_0 \oplus N_0)$ be any
  finite-dimensional linear operator\footnote{That is, $G$ is a
    ``generalized inverse'' of $\dP_+$.  It always exist but may no be
    unique; the choice of $G$ with the least norm is the Moore-Penrose
    pseudoinverse.} such that $\dP_+ G \phi = \phi$ for any
  $\phi\in\cL$. We let
  \begin{equation}\no
    w=(w_1, w_2) = G \tr v \in\cL\cap\big( D_0\oplus N_0
    \big)\subset\,\dL^2(\partial\Gamma);
    \label{vv34}
  \end{equation}
  it satisfies
  \begin{align}
    & \dP_+ w = \tr v,\label{vv32}\\
    &\|\dP_0 w\|_{\dL^2(\partial\Gamma_0)}
      \leq \|w\|_{\dL^2(\partial\Gamma)}
      \lesssim \|\tr v\|_{\dL^2(\partial\Gamma_+)},\label{vv33}
  \end{align}
  the latter because $G$, as any finite-dimensional linear operator,
  is bounded.

  Using \eqref{vv32} we rewrite the last integral in \eqref{vv26},
  \begin{align}
    Z = \int_{\partial\Gamma_+} \overline{\partial_{\nu}u} v-\overline{u}\partial_{\nu}v=\omega_{\Gamma}(\dP_{+}\tr^{\ell}u,\, \dP_{+}w).\label{vv40new}
  \end{align}
  Since $\omega_{\Gamma}(\tr^{\ell}u,w)=0$, equation \eqref{aa19} yields
  \begin{equation}\label{vv35new}
    Z = \omega_{\Gamma}(\dP_{+}\tr^{\ell}u,\, \dP_{+}w)=\omega_{\Gamma}(\dP_{0}\tr^{\ell}u,\, \dP_{0}w)=\int_{\partial\Gamma_0} \overline{\partial_{\nu}u}w_1-\overline{u}w_2.
  \end{equation}
 We estimate each term in \eqref{vv35new} individually. Using
  $w_1\in D_0$ and the Cauchy--Schwarz inequality one obtains
  \begin{align}
    \begin{split}\label{vv36}
      \Big|\int_{\partial\Gamma_0} \overline{\partial_{\nu}u}w_1\Big|&=\Big|\sum_{e\in\cE_0}w_1(b_e)u'(b_e)-w_1(a_e)u'(a_e)\Big|=\Big|\sum_{e\in\cE_0}w_1(a_e)\int_{e}u''\Big|\\
      &\leq \sum_{e\in\cE_0}|w_1(a_e)|\sqrt{\ell_e} \|u''\|_{L^2(e)}.
    \end{split}
  \end{align}
  
  Similarly, using $w_2\in N_0$ and the Cauchy--Schwarz inequality we get
  \begin{align}
    \begin{split}\label{vv37}
      \Big|\int_{\partial\Gamma_0} \overline{u}w_2\Big|
      &=\Big|\sum_{e\in\cE_0}w_2(b_e)\overline{u}(b_e)
      +w_2(a_e)\overline{u}(a_e)\Big|
      =\Big|\sum_{e\in\cE_0}w_2(a_e)\int_{e}\overline{u}'\Big|\\
      &\leq \sum_{e\in\cE_0}|w_2(a_e)|\sqrt{\ell_e} \|u'\|_{L^2(e)}.
    \end{split}
  \end{align}
  
  Therefore, utilizing  \eqref{vv33}, \eqref{vv35new} -- \eqref{vv37} we arrive at
  \begin{align}
    \big|Z\big|
    \lesssim \sqrt{|\ell_0|}\, 
    \|\dP_0 w\|_{\dL^2(\partial\Gamma_+)} 
    \|u\|_{\hatt{H}^2(\Gamma(\ell))}
    &\lesssim \sqrt{|\ell_0|}\,\|\tr v\|_{\dL^2(\partial\Gamma_+)} 
      \|u\|_{\hatt{H}^2(\Gamma(\ell))} \\
    & \leq  \sqrt{|\ell_0|}\,\|\tr\|\, 
      \|v\|_{\hatt{H}^2(\Gamma(\wti\ell))}
      \|u\|_{\hatt{H}^2(\Gamma(\ell))}
  \end{align}
  Let us notice that 
  \begin{equation}\label{az6.1}
    \sup\limits_{\ell_+:\ 
      \frac{|\wti \ell|}{2}\leq |\ell_+|\leq |\wti\ell|}
    \|\tr^{\hatt\ell}\|_{\cB\big(\hatt{H}^2(\Gamma(\ell)),
      \dL^2(\partial\Gamma)\big)}
      =\cO\left(1\right)\  \text{as\ }
    \ell_0\rightarrow 0,
  \end{equation}
  and
  \begin{align*}
    \|u\|_{\hatt{H}^2(\Gamma(\ell))} 
    &\leq \|R(\cL,\ell,\overline{z})\|_{\cB\big(L^2(\Gamma(\ell)),
      \hatt{H}^2(\Gamma(\ell))\big)}
      \|f\|_{L^2(\Gamma(\wti\ell))} \\
    \|v\|_{\hatt{H}^2(\Gamma(\wti\ell))}
    &\leq \|R(\wti\cL,\hatt\ell,z)\|_{\cB\big(L^2(\Gamma(\wti\ell)),
      \hatt{H}^2(\Gamma(\ell))\big)}
      \|g\|_{L^2(\Gamma(\ell))}
  \end{align*}
  
  Combining these with \eqref{b8} we obtain \eqref{u22new}.
	
  {\bf Step 2.}  Let us denote $ \hatt\ell:= (\ell_+, 0)$,
  $\wti\ell=(\wti\ell_+,0)$ and let $\cJ_{\hatt\ell}:
  L^2(\Gamma(\wti\ell)) \to L^2(\Gamma(\hatt\ell))$ be defined as
  \begin{equation*}
    \left(\cJ_{\hatt\ell} f\right)(x) 
    = \sqrt{\frac{\wti{\ell}_e}{\ell_e}}
    f\left(\frac{x\wti{\ell}_e}{\ell_e}\right), 
    \quad x \in e \in \cE_+.
  \end{equation*}
  We remark that in this case, the operators $\cJ_{\hatt \ell}$ are unitary.
  We need to prove
  \begin{equation}
    \label{az10}
    \Big\|{\cJ_{\hatt\ell}\,}R(\wti\cL,\wti\ell,z)-R(\wti\cL,\hatt\ell,z){\cJ_{\hatt\ell}\,}\Big\|_{\cB\left(L^2(\Gamma(\wti\ell)), L^2(\Gamma(\hatt\ell))\right)}\underset{\ell_+\rightarrow \wti\ell_+}{=} o\left(1\right),
  \end{equation}
  where $R(\wti\cL,\hatt\ell,z)$ denotes the resolvent of
  $H(\wti\cL, \hatt\ell\,)$, the Laplace operator acting in
  $L^2(\Gamma(\hatt \ell))$ and associated with the Lagrangian plane
  $\wti\cL$ as in Theorem \ref{a30.1}.
	
  This case has been considered in \cite[Theorem 3.7]{BK12}. In
  particular, it is proved there that the operator valued function
  \begin{equation}
    \hatt\ell\mapsto{\cJ_{\hatt\ell}\,}R(\wti\cL,\hatt \ell,z){\cJ_{\hatt\ell}}^{-1},
  \end{equation} 
  is continuous. This together with the fact that $\cJ_{\hatt\ell}$
  is unitary implies \eqref{az10}.

  {\bf Step 3.} In this step we show how to combine the results from
  previous steps to derive \eqref{u8}. To this end we use \eqref{d3}
  and $\cJ_{\ell}=\bbJ_{\ell,\hatt\ell}\,\cJ_{\hatt\ell}$ to notice
  the following:
  \begin{multline}
    \cJ_{\ell}\,R(\wti\cL,\wti\ell,z)-R(\cL,\ell,z)\cJ_{\ell}=\bbJ_{\ell,\hatt\ell}\,\cJ_{\hatt\ell}\,R(\wti\cL,\wti\ell,z)-R(\cL,\ell,z)\bbJ_{\ell,\hatt\ell}\,\cJ_{\hatt\ell} \label{pr5}\\
    =\big(\bbJ_{\ell,\hatt\ell}\,R(\wti\cL,\ell,z)-R(\cL,\ell,z)\bbJ_{\ell,\hatt\ell}\,\big)\cJ_{\hatt\ell}
    +\bbJ_{\ell,\hatt\ell}\,\big( {\cJ_{\hatt\ell}\,}R(\wti\cL,\wti\ell,z)-R(\wti\cL,\ell,z)\,{\cJ_{\hatt\ell}\,}\big).
  \end{multline}
  For all $\ell, \hatt\ell$ one has
  \begin{equation}
    \|\cJ_{\hatt\ell}\, \|_{\cB(L^2(\Gamma(\wti\ell), \Gamma(\hatt \ell)))}=1,\ \|\bbJ_{\ell,\hatt\ell}\, \|_{\cB(L^2(\Gamma(\hatt\ell), \Gamma(\ell)))}=1.
  \end{equation}
  Moreover,
  \begin{equation}
    \ell=(\ell_+,\ell_0)\rightarrow\wti\ell= (\wti\ell_{+}, 0)\iff \ell_+\rightarrow\wti\ell_{+},\ \ell_0\rightarrow 0.
  \end{equation}
  Therefore, using \eqref{u8new} (uniformly in $\ell_+$ satisfying
  \eqref{az5}), \eqref{az10}, \eqref{pr5} and the triangle inequality,
  we obtain \eqref{u8}.
\end{proof}

Our next goal is to show that the generalized resolvent convergence of
the Schr\"odinger operators implies convergence of spectral
projections and thus convergence of spectra in the Hausdorff sense. In
case of non-negative operators this result was established in
\cite[Theorem 4.3.3]{P12}. In the present setting \cite[Theorem
4.3.3]{P12} is not directly applicable since the bottom of the
spectrum of $H(\cL,\ell)$ may tend to negative infinity as
$\ell\rightarrow\wti\ell$ (cf., \cite[Section
3.3]{KS06}). Nevertheless, the convergence of spectra still holds. We
carry out the proof following the standard line of arguments from
\cite{P12}, \cite[Theorem VIII.20, VIII.23, VIII.24]{RS1}.

\begin{proof}[Proof of Theorem~\ref{thm:spectral_convergence}]\label{pgspec}

  The convergence of spectra follows from Theorem~\ref{cc1} and
  \cite[Proposition 4.3.1]{P12}.
	First of all, by Theorem~\ref{y1}, conditions {\it(i)-(iii)} of
  Theorem~\ref{ss1} hold. Hence Theorem \ref{uu77} and Theorem
  \ref{u7} are applicable.
	
	Next, in order to simplify notation let us denote 
	\begin{align}
	\begin{split}\label{ee1}
	&\wti R_{\pm}:=R(\wti\cL,\wti\ell, \pm\bfi),\ \ R_{\pm}:=R(\cL,\ell, \pm\bfi),\\
	&\wti H:=H(\wti\cL,\wti\ell),\ \  H:=H(\cL,\ell).
	\end{split}
	\end{align}
	
	Let us prove the first assertion in \eqref{eq:projectors_convergence_AB}. Proceeding as in \cite[Theorem 4.2.9]{P12} and using \eqref{u8} we get
	\begin{align}
	\begin{split}\label{ee7}
	\Big\|\cJ_{\ell}\wti R^p_{\pm}-R^p_{\pm}\cJ_{\ell}\Big\|_{\cB\left(L^2(\Gamma(\wti\ell)), L^2(\Gamma(\ell))\right)}\underset{\ell\rightarrow\wti\ell}{=} o(1),\ p\in\bbN.
	\end{split}
	\end{align}
	Next, for arbitrary $p,q\in\bbN$ one has
	\begin{align}
	\cJ_{\ell}\wti R^p_{+}\wti R^q_{-}-R^p_{+}R^q_{-}\cJ_{\ell}&= \big(\cJ_{\ell}\wti R^p_{+}-R^p_{+}\cJ_{\ell}\big)\wti R^q_{-}\label{ee8}\\
	&\quad +R^p_{+}\big( \cJ_{\ell}\wti R^q_{-}-R^q_{-}\cJ_{\ell}\big).\label{ee9}
	\end{align}
	Let us notice that 
	\begin{equation}
	\|R^p_{+}\|_{\cB(L^2(\Gamma(\ell)))}\leq 1,\ \|\wti R^p_{+}\|_{\cB(L^2(\Gamma(\wti\ell)))}\leq 1.
	\end{equation}
	Therefore \eqref{ee7}--\eqref{ee9} yield 
	\begin{equation}\label{ee10}
	\Big\|\cJ_{\ell}\wti R^p_{+}\wti R^q_{-}-R^p_{+}R^q_{-}\cJ_{\ell}\Big\|_{\cB\left(L^2(\Gamma(\wti\ell)), L^2(\Gamma(\ell))\right)}\underset{\ell\rightarrow\wti\ell}{=} o(1),\ p,q\in\bbN.
	\end{equation}
	By the Stone--Weierstrass theorem polynomials in $(x+\bfi)^{-1}$ and $(x-\bfi)^{-1}$ are dense in $C(\overline{\bbR})$, the space of continuous functions for which the limits at both $+\infty$ and $-\infty$ exist and are equal. That is, given any $f\in C(\overline{\bbR})$ and arbitrary $\varepsilon>0$ there exits a polynomial $P(u,v)$ such that
	\begin{equation}\label{ee30}
	\text{ess sup}_{x\in\bbR}|f(x)-P((x+\bfi)^{-1}, (x-\bfi)^{-1})|<\varepsilon.
	\end{equation} 
	Combining \eqref{ee10} and \eqref{ee30} we arrive at 
	\begin{align}
	\begin{split}\label{ee11}
	\Big\|\cJ_{\ell}f(\wti H)-f( H)\cJ_{\ell}\Big\|_{\cB\left(L^2(\Gamma(\wti\ell)), L^2(\Gamma(\ell))\right)}\underset{\ell\rightarrow\wti\ell}{=} o(1),
	\end{split}
	\end{align}
	for all $f\in C(\overline{\bbR})$. As in the case of positive operators, \eqref{ee11} gives rise to a similar identity for the spectral projections corresponding to bounded open sets. Namely, in the present context the analogue of \cite[Corollary 4.2.12]{P12} reads as 
	\begin{align}
	\begin{split}\label{ee31}
	\Big\|\cJ_{\ell}\chi_{(a,b)}(\wti H)-\chi_{(a,b)}(H)\cJ_{\ell}\Big\|_{\cB\left(L^2(\Gamma(\wti\ell)), L^2(\Gamma(\ell))\right)}\underset{\ell\rightarrow\wti\ell}{=} o(1),
	\end{split}
	\end{align}
	where $-\infty<a<b<\infty$ and $a,b\not\in\spec(\wti H)$. In order to show \eqref{ee31}, let us pick any $\psi\in C(\overline{\bbR})$ satisfying 
	\begin{equation}
          0\leq \psi\leq 1,\ \supp(\psi)\subset\bbR\setminus\{a,b\}
          \quad\mbox{and}\quad\psi(x)=1\mbox{ whenever } x\in\spec(\wti H).
	\end{equation}
	Then
	\begin{multline}
          \Big\|\cJ_{\ell}\chi_{(a,b)}(\wti H)-\chi_{(a,b)}(H)\cJ_{\ell}\Big\|
          \leq\Big\|\cJ_{\ell}\psi(\wti H)\chi_{(a,b)}(\wti
          H)-\psi(H)\chi_{(a,b)}(H)\cJ_{\ell}\Big\| \\
          +\Big\|\cJ_{\ell}(1-\psi)(\wti H)\chi_{(a,b)}(\wti H)-(1-\psi)(H)\chi_{(a,b)}(H)\cJ_{\ell}\Big\|,\label{ee41}
	\end{multline}
	where the norms are taken in ${\cB\left(L^2(\Gamma(\wti\ell)), L^2(\Gamma(\ell))\right)}$. Since $\psi\chi_{(a,b)}\in C(\overline{\bbR})$, the expression in \eqref{ee41} is $o(1)$ as $\ell\rightarrow\wti\ell$. Using $(1-\psi)(\wti H)=0$, we rewrite and estimate \eqref{ee41} as follows
	\begin{equation}
          \|(1-\psi)(H)\chi_{(a,b)}(H)\cJ_{\ell}\Big\| 
          \leq \Big\|(1-\psi)(H)\cJ_{\ell}\Big\|
          \leq \Big\|\cJ_{\ell}(1-\psi)(\wti
          H)-(1-\psi)(H)\cJ_{\ell}\Big\|.
          \label{ee43}
	\end{equation}
	Since  $1-\psi \in C(\overline{\bbR})$, the expression in \eqref{ee43} is $o(1)$ as $\ell\rightarrow\wti\ell$. Hence, \eqref{ee31} and the first part of  \eqref{eq:projectors_convergence_AB} hold as asserted. Analogously, the second part of  \eqref{eq:projectors_convergence_AB}  can be derived from the second part of \eqref{eq:resolvent_convergence_AB}. 
\end{proof}

\begin{proof}[Proof of Theorem \ref{scale}]\lb{pgscale}
  Due to Theorem~\ref{thm:spectral_convergence} it is enough to show
  that \eqref{c9} implies Condition
  \ref{newhyp}. Seeking a contradiction we assume that Condition
  \ref{newhyp} is not fulfilled and will show that \eqref{c9} does not
  hold. In fact we will prove a slightly stronger statement,
  \begin{equation}
    \lb{gh21}
    \dim(\ker(H(\cL,\ell))) 
    > \dim(\ker(H(\wti\cL,\wti\ell))),
    \quad \ell\in\bbR^{|\cE|}_{>0}.
  \end{equation}
  In particular, the multiplicity of zero eigenvalues of the limiting
  and the approximating operators do not match.

  Our first objective is to prove that any
  $\varphi\in \ker(H(\wti\cL,\wti\ell))$ is constant on each edge. By
  Proposition \ref{gh1} there exist subspaces $\cL_D, \cL_N$ such that
  \begin{equation*}
    \cL=\{(\phi_1,\phi_2)\in\,\dL^2(\partial \Gamma): \phi_1\in\cL_D,  \phi_2\in\cL_N \}.
  \end{equation*}
  By Theorem \ref{a30.1}, one has 
  \begin{equation}
    \wti\cL := \{ \dP_+(\phi_1,\phi_2): 
    \phi_1\in \cL_D\cap D_0, \phi_2\in \cL_N\cap N_0\}.
  \end{equation}
  Then by Proposition \ref{gh1} the vertex conditions of
  $H(\wti\cL,\wti\ell)$ are scale invariant. From~\eqref{aa14} one has
  \begin{equation}
    0 = \langle\varphi, 
    H(\wti\cL,\wti\ell)\varphi\rangle_{L^2(\Gamma(\wti\ell))}
    = \|\varphi'\|_{L^2(\Gamma(\wti\ell))}.
  \end{equation}
  Thus $\varphi$ is constant on each edge, in particular $\varphi(a_e)
  = \varphi(b_e)$ for every $e\in\cE_+$.

  Next, for each $\varphi\in\ker(H(\wti\cL,\wti\ell))$ we construct
  $f_\varphi\in\ker(H(\cL,\ell))$ as follows. Since
  $\tr\varphi\in \wti\cL$, by Theorem \ref{a30.1} there exists
  \begin{equation}
    (\phi_1,\phi_2)\in \cL\cap (D_0\oplus N_0)
  \end{equation} 
  such that $\tr\varphi = \dP_+(\phi_1,\phi_2)$.  Note that since
  $\varphi$ was constant on edges from $\cE_+$ and $\phi_1 \in D_0$,
  \begin{equation}
    \label{eq:phi_equal_everywhere}
    \phi_1(a_e) = \phi_1(b_e)\quad\mbox{on every edge }e.
  \end{equation}
  Let us define a function $f_{\varphi}$, constant on each edge, by
  the formula
  \begin{equation}
    f_{\varphi} := \sum_{e\in\cE}\phi_1(a_e)\chi_e,
  \end{equation}
  We claim that $\tr f_{\varphi}\in\cL$.  By construction and
  property~\eqref{eq:phi_equal_everywhere},
  $\gamma_D f_\varphi = \phi_1$.  Since $f_\varphi$ is constant on
  edges, $\gamma_N f_\varphi = 0$.  Finally, by Proposition~\ref{gh1}
  $(\phi_1,\phi_2)\in\cL$ implies $(\phi_1,0)\in\cL$.  Therefore,
  $f_{\varphi}\in \ker(H(\cL,\ell))$ and
  $f_{\varphi}|_{\Gamma_+}=\varphi$.

  We have now produced a function $f_\varphi\in\ker(H(\cL,\ell))$ for
  every $\varphi\in\ker(H(\wti\cL,\wti\ell))$.  It it easy to see that
  $f_\varphi$ are linearly independent if the corresponding $\varphi$
  are.  Furthermore, no non-trivial linear combination of $f_\varphi$
  can be zero on $\Gamma_+$.

  Let us now utilize Lemma \ref{y21} to produce a nonzero
  $f\in\ker(H(\cL,\ell))$ such that $f|_{\Gamma_+}=0$.  It is clearly
  linearly independent of all $f_\varphi$, leading to
  \begin{equation}
    \dim(\ker(H(\wti\cL,\wti\ell))) 
    < \dim(\text{span} \{f_{\varphi}, f: \varphi\in
    \ker(H(\wti\cL,\wti\ell))\})
    \leq \dim(\ker(H(\cL,\ell)))
  \end{equation}
  as required.
\end{proof}

\end{document}